\newtheorem{theorem}{Theorem}
\newtheorem{lema}[theorem]{Lemma}
\newtheorem{prop}[theorem]{Proposition}
\newtheorem{cor}[theorem]{Corollary}
\newtheorem{obs}[theorem]{Remark}
\theoremstyle{definition}
\newtheorem{example}[theorem]{Example}
\newcommand{\Ad}{\operatorname{Ad}}
\newcommand{\Z}{\mathbb{Z}}
\newcommand{\Tel}{\operatorname{Tel}}
\newcommand{\C}{\mathbb{C}}
\newcommand{\R}{\mathbb{R}}
\newcommand{\Id}{\operatorname{Id}}
\newcommand{\crit}{\operatorname{crit}}
\newcommand{\ind}{\operatorname{ind}}
\newcommand{\norm}[1]{\left\lVert#1\right\rVert}
\newcommand{\Hom}{\operatorname{Hom}}
\title[Equivariant and Reduced Floer homology]{Equivariant Floer homology is isomorphic to reduced Floer homology}
\author{Julio Sampietro Christ}
\email{julio.sampietro-christ@universite-paris-saclay.fr}
\address{Institut de mathématique d'Orsay, Université Paris-Saclay, Bâtiment 307, rue Michel Magat, Orsay, France}
\begin{document}
\raggedbottom
\maketitle

\begin{abstract}
  Given a symplectic manifold equipped with a Hamiltonian $G$-action and two $G$-invariant Lagrangians, we lift the construction of equivariant Lagrangian Floer homology of G.\@~Cazassus to the Novikov ring by constructing a ``quantum'' model in the vein of Biran and Cornea and Schm\"aschke. Using this refinement, we prove Cazassus' conjecture that, if the action on the zero-level of the moment map is free, then equivariant Floer homology agrees with the Floer homology of the Marsden-Weinstein reduction.
\end{abstract}

\section{Introduction}

Equivariant Lagrangian Floer homology is a homological invariant associated to a symplectic manifold $X$ with a Hamiltonian $G$-action and two $G$-invariant Lagrangians $L_{0},L_{1}$. Over the years, multiple versions of this invariant have appeared. The construction we are concerned with is due to \cite{Cazassus}, and is based on the quilted Lagrangian Floer theory of \cite{Wehrheim-Woodward2,Wehrheim-Woodward1}. Motivated by analogous results in classical equivariant homology, Cazassus conjectured that, when the group $G$ acts freely on the zero-level of the moment map $\mu$ of the $G$-action, and both Lagrangians $L_{0},L_{1}$ are contained in the zero-level, then the equivariant homology agrees with the regular Floer homology of the reductions of $L_{0}$ and $L_{1}$, namely, the Lagrangian submanifolds $\overline{L_{0}}$ and $\overline{L_{1}}$ of the symplectic reduction $X//G:=\mu^{-1}(0)/G$. This paper resolves the conjecture affirmatively by introducing a refined construction of equivariant Lagrangian Floer homology over the Novikov ring, inspired by the ``quantum Lagrangian homology'' model of \cite{Biran-Cornea,Biran-Cornea2}, and, more generally, the cascade homology for cleanly intersecting\footnote{See Section \ref{sec:floer-homology-of-lagrangians-in-clean-inntersection} for the definition of clean intersection.} Lagrangians developed by \cite{Schmaschke}. Our approach not only proves the conjecture but also sharpens the theory’s tools, offering enhanced control over the energy filtration (as is typically the case when working with ``quantum'' versions of Lagrangian Floer homology).

Lagrangian Floer homology, pioneered by Floer in \cite{Floer} studies Lagrangian submanifolds in symplectic topology. For transversally intersecting Lagrangians, it is the homology of a complex generated by the finitely many intersection points, with a differential counting rigid pseudoholomorphic strips. For non-transversally intersecting Lagrangians, quantum models like those in \cite{Oh4, Cornea-Lalonde, Biran-Cornea,Schmaschke} are the homologies of complexes obtained by deforming the Morse homology differential using so-called pearly trajectories combining holomorphic strips and Morse flow lines. In this paper, we will adopt Schmäschke's version \cite{Schmaschke} of quantum homology for a pair of cleanly intersecting Lagrangians.

Since the symplectic manifold $X$ is equipped with a Hamiltonian $G$-action (here $G$ is some compact Lie group) one would like an invariant that is sensitive to this additional structure, akin to ``classical'' equivariant homology. One may ask, for example, if a certain Lagrangian is equivariantly displaceable, i.e., if there exists a $G$-invariant function whose Hamiltonian isotopy displaces $L$ from itself. Another motivation is that the equivariant homology of Lagrangians in the zero level set gives a tool to probe potentially singular Lagrangians in the potentially singular quotient $X//G$. The case when the singularities are of orbifold type is relevant to the Atiyah-Floer conjecture \cite{Atiyah}.

With the goal of studying such problems, \emph{equivariant Lagrangian Floer homology} was developed. Several versions have appeared in the literature, for example \cite{Frauenfelder,KLZ,LLL,Cazassus}. In the last reference, Cazassus builds a finite-dimensional approximation of a ``symplectic Borel space,'' and takes a directed homotopy colimit of the finite-dimensional approximations using maps constructed with the pseudoholomorphic quilts of \cite{Wehrheim-Woodward2,Wehrheim-Woodward1}. His construction uses Hamiltonian perturbations at each step, so as to make the complexes generated by intersection points of transversally intersecting Lagrangians. A main contribution of this article is to provide a ``quantum'' model of his equivariant Lagrangian Floer theory, assuming the Lagrangians are in clean intersection. This allows the incorporation of coefficients over the Novikov ring and finer control over the energy filtration.

\subsection{Statement of the main result}
\label{sec:stat-main-result}

To motivate the main theorem, recall that in the equivariant homology of finite dimensional manifolds, if the action on the manifold is free, then the equivariant homology agrees with the homology of the quotient. Cazassus conjectured the same should hold for equivariant Floer homology: if the action of $G$ on the zero-level of the moment map is free, then the equivariant Floer homology is isomorphic to the regular Floer homology of the reduction. Proving this conjecture is the main application of our paper.

Suppose $X$ is a symplectic manifold equipped with a Hamiltonian $G$-action, and $L_0,L_1\subset X$ are two $G$-invariant Lagrangians contained in the zero-level of the moment map which intersect cleanly. We further assume that $\omega\cdot \pi_2(X,L_j)=0$ (said to be \emph{relatively exact}) and $X$ is either compact or convex at infinity.
\begin{theorem}\label{MainTheorem}
  If the action of $G$ on the zero-level of the moment map is free, then there is an isomorphism
  $$FH^{G}_{\ast}(L_0,L_1;\Lambda_0)\xrightarrow{\cong} FH_{\ast}(\overline{L_0},\overline{L_1};\Lambda_0).$$
  Where $\Lambda_0$ denotes the Novikov ring over $\Z_2$ and $\overline{L_j}=L_j\slash G$.
\end{theorem}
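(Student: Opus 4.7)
The plan is to build a filtration-preserving chain map $\Phi$ from the refined equivariant quantum Floer complex of $(L_0,L_1)$ in $X$ to Schm\"aschke's cascade complex of $(\overline{L_0},\overline{L_1})$ in $X/\!/G$, and to verify that $\Phi$ is a quasi-isomorphism by a spectral-sequence argument based on the Novikov energy filtration afforded by the quantum model developed in the earlier sections.

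The first step is to match generators. Because $G$ acts freely on $\mu^{-1}(0)\supset L_0\cup L_1$, the clean intersection $L_0\cap L_1$ is a $G$-invariant submanifold on which $G$ acts freely, so the natural projection of the Borel construction $(L_0\cap L_1)_G$ down to $(L_0\cap L_1)/G=\overline{L_0}\cap\overline{L_1}$ is a homotopy equivalence. I would pick a Morse function $\bar{f}$ on $\overline{L_0}\cap\overline{L_1}$, pull it back to a $G$-invariant Morse--Bott function on $L_0\cap L_1$ whose critical set is the $G$-orbit of $\mathrm{Crit}(\bar f)$, and extend it compatibly to the Borel-approximation intersections Cazassus uses in his directed colimit. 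With a $G$-invariant metric, the equivariant Morse-theoretic part of the equivariant cascade complex then computes, cofinally in the colimit, the ordinary Morse complex of $\bar f$ on $\overline{L_0}\cap\overline{L_1}$, matching the classical generators of the Schm\"aschke complex on the reduction side.

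For the chain map itself, I would use a vortex- or PSS-type moduli space: at one end the equation specialises to Cazassus's quilted pearls (pseudoholomorphic strips in $X$ coupled with Morse flow on the Borel space), and in an adiabatic limit where the Yang--Mills coupling grows, the symplectic vortex equation on a strip with $L_0,L_1$ boundary becomes, by the Gaio--Salamon correspondence, a pseudoholomorphic strip in $X/\!/G$ with boundary on $\overline{L_0},\overline{L_1}$---exactly the input of Schm\"aschke's cascade differential. Because both complexes are energy-filtered and this interpolation preserves area, $\Phi$ will be a chain map of $\Lambda_0$-modules respecting the Novikov filtration. To conclude, the filtration is exhaustive, complete and bounded below (by relative exactness and the compactness/convexity hypothesis on $X$), so the associated spectral sequences converge strongly; $\Phi$ induces an isomorphism on the $E^{1}$-page because at energy zero only classical Morse contributions survive, and the first step identifies these on both sides with the Morse homology of $\bar f$. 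A standard comparison theorem for spectral sequences then upgrades the $E^{1}$-isomorphism to a quasi-isomorphism.

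The main obstacle is the analytic input in constructing $\Phi$: one must establish compactness, transversality and gluing for a one-parameter family of elliptic problems interpolating Cazassus's equivariant quilted pearls with Schm\"aschke's cleanly intersecting pearls in $X/\!/G$, generalising the Cieliebak--Gaio--Mundet--Salamon and Ziltener vortex analyses to Lagrangian boundary conditions at clean intersections, and doing so in a way that remains filtration-preserving and compatible with the directed colimit that computes $FH^{G}_{\ast}$. Once this analytical backbone is in place, the rest of the argument is the formal spectral-sequence computation sketched above.
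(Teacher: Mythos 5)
Your overall strategy---build a filtration-preserving chain map to the Floer complex of the reduction and upgrade a zero-energy isomorphism to a quasi-isomorphism---is the same as the paper's, and your identification of the zero-energy part with a classical Morse-theoretic comparison is exactly the right idea (the paper realizes it as the Cartan isomorphism $MH^G_*(L_0\cap L_1)\cong MH_*(\overline{L_0}\cap\overline{L_1})$ and then applies the gapped quasi-isomorphism criterion of Proposition \ref{Prop1}). However, your construction of the chain map itself has a genuine gap. The equivariant complex is not a single complex admitting a vortex-type description: it is the completed telescope of the Floer complexes of the Borel spaces $X_n=(X\times T^*EG_n)/\!/G$, connected by the quilted increment maps $\alpha_n$. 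To define a map out of a homotopy colimit you need, for \emph{each} $n$, a chain map $\mathfrak{P}_n:FC_*(L_0^n,L_1^n)\to FC_*(\overline{L_0},\overline{L_1})$ \emph{together with} explicit chain homotopies $\mathfrak{P}_{n+1}\circ\alpha_n\simeq \mathfrak{P}_n$. The paper obtains the $\mathfrak{P}_n$ from quilts with seam on the Marsden--Weinstein correspondence $\Pi_n$, and the homotopies from a one-parameter family of two-seam quilted surfaces in which the $\mathcal{I}_n$- and $\Pi_{n+1}$-seams are brought together (this requires the strip-shrinking analysis and exclusion of figure-eight bubbles, plus the identity $\mathcal{I}_n\circ\Pi_{n+1}=\Pi_n$ as an embedded composition). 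Your proposed Gaio--Salamon adiabatic interpolation, even if its analysis were carried out, produces at best one map per stage and says nothing about coherence with the increment maps, so it does not yet define a map on $FH^G_*$.

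Two further points would need repair. First, your claim that the Morse-theoretic part of the equivariant complex ``computes, cofinally in the colimit, the ordinary Morse complex of $\bar f$'' is false at the chain level: $(L_0\cap L_1)\times_G EG_n$ fibers over the quotient with fiber $EG_n$, which for finite $n$ is not contractible, so each stage has strictly more critical points than $\overline{L_0}\cap\overline{L_1}$; the identification only holds in homology after passing to the direct limit, which is precisely the Cartan isomorphism and must be invoked at the level of the zero-energy \emph{homology} of the telescope, not by matching generators. Second, your spectral-sequence step silently requires a \emph{uniform} energy gap $\hbar>0$ valid simultaneously for all the differentials on all the $X_n$, all the increment quilts, all the $\Pi_n$-quilts and all the homotopies; since the telescope involves infinitely many auxiliary choices, this uniformity is not automatic and the paper devotes a separate argument (Section \ref{Gapping} and its analogue for $\mathfrak{P}$) to arranging it via Gromov compactness around a fixed $G$-invariant reference structure. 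One also needs the Hamiltonian perturbations small enough that all quilts have nonnegative area, or the map does not even land in $\Lambda_0$. Without these ingredients the comparison criterion (Proposition \ref{Prop1}) cannot be applied.
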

\begin{obs}
  While the theorem is stated for relatively exact Lagrangians, the proof carries over with minor modifications to the monotone case, where a polynomial ring can be used instead of $\Lambda_0$, and to the exact case where we can work over $\Z_2$ directly. If the Lagrangians are not in clean intersection, the same result holds with $\Lambda=\Lambda_0[T^{-1}]$-coefficients.
\end{obs}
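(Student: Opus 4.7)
The plan for establishing this remark is to argue that each of the three extensions amounts to changing only the coefficient bookkeeping in the proof of Theorem~\ref{MainTheorem}, while the underlying geometric constructions and chain-level comparison maps are unchanged. I would dispatch the three cases in order of increasing difficulty.

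\textbf{Exact case.} First I would observe that if $\omega = d\lambda$ and both $L_{j}$ admit primitives of $\lambda|_{L_{j}}$, then Stokes' theorem pins the symplectic area of every strip, disk, sphere, pearl, or quilted configuration appearing in either complex to a fixed difference of primitives at its asymptotes. No bubbling is possible, the relevant moduli spaces are automatically compact, and the formal variable $T^{\omega(\cdot)}$ can be set to $1$ throughout. Since the primitives descend through $\mu^{-1}(0)$, the Marsden--Weinstein reduction inherits the exact structure, so both sides of the isomorphism become complexes of $\Z_{2}$-vector spaces and the comparison map of Theorem~\ref{MainTheorem} is automatically $\Z_{2}$-linear.

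\textbf{Monotone case.} Next, for the monotone setting I would use that the symplectic areas of contributing configurations lie in the discrete set $\lambda_{L}\Z_{\geq 0}$ determined by the monotonicity constant, and that for each fixed index shift only finitely many area strata contribute. The differential and all chain maps therefore decompose as $\sum_{k\geq 0} d_{k} T^{k}$ with $d_{k}$ identically zero for large $k$ in each degree, so the construction descends from $\Lambda_{0}$ to the polynomial ring $\Z_{2}[T]$. The substantive point to check is that monotonicity with a compatible constant is inherited by $\overline{L_{j}}\subset X/\!/G$; this follows from the standard description of the Maslov and Chern classes under the principal $G$-bundle $\mu^{-1}(0)\to X/\!/G$, combined with the fact that the reduction preserves relative symplectic areas on configurations contained in the zero-level.

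\textbf{Non-clean case.} Finally, without clean intersection one cannot define a pearl complex directly; instead one falls back on an auxiliary $G$-invariant Hamiltonian perturbation to achieve transversality, as in Cazassus's original construction. The comparison argument of Theorem~\ref{MainTheorem} then proceeds through continuation maps between choices of perturbation, which in general involve configurations of both positive and negative relative symplectic area. To guarantee that these continuation maps are well-defined chain-level quasi-isomorphisms, one must invert the Novikov parameter, so the natural coefficient ring becomes the Novikov field $\Lambda = \Lambda_{0}[T^{-1}]$. Over this field the energy filtration used in the main proof is no longer bounded below but every nonzero element is a unit, which is enough to run the spectral-sequence/filtration comparison argument and to identify $FH^{G}_{\ast}$ with $FH_{\ast}(\overline{L_{0}},\overline{L_{1}})$.

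The main obstacle in each case is bookkeeping rather than new geometry: one must verify that Cazassus's finite-dimensional approximations to the symplectic Borel construction and the associated directed homotopy colimit preserve, respectively, exactness, monotonicity with a compatible constant, or $G$-invariant transversality after perturbation. Since all three verifications are routine consequences of the standard reduction and approximation constructions, the author records them as a remark rather than spelling out independent proofs; the nontrivial mathematics is already contained in Theorem~\ref{MainTheorem}.
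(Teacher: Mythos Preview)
Your treatment of the exact and non-clean cases is sound and matches the spirit of what the paper intends (the remark itself carries no proof; the monotone case is fleshed out later in Section~5.2). However, your monotone paragraph has a genuine gap.

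You assert that ``the symplectic areas of contributing configurations lie in the discrete set $\lambda_{L}\Z_{\geq 0}$.'' This is not a consequence of monotonicity of the individual Lagrangians alone. Monotonicity in the sense of \eqref{M1} controls the areas of discs in $\pi_{2}(X,L_{j})$, but the configurations in the differential are strips (and quilts) with boundary on \emph{both} $L_{0}$ and $L_{1}$; their areas depend on the asymptotic intersection points and are not automatically integer multiples of a fixed constant. The paper handles this by imposing an additional \emph{annuli-monotonicity} hypothesis \eqref{M2}, which gives the energy--index relation $\omega(u)=\tau\operatorname{Mas}(u)+c(p,q)$ for a constant depending only on the endpoints. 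It is this relation, not \eqref{M1} by itself, that bounds the number of homotopy classes contributing to each coefficient and forces the powers of $T$ to be integer powers of $t=T^{\tau}$, thereby reducing to $\Z_{2}[t]$. Without \eqref{M2} you still get the theorem, but only over $\Lambda_{0}$.

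There is also a second point you miss: over $\Z_{2}[t]$ the $t$-adic filtration is no longer complete, so Proposition~\ref{Prop1} (the gapped quasi-isomorphism criterion) does not apply directly. The paper notes that the induced filtration on homology \emph{is} complete and invokes \cite[Theorem~3.9]{McCleary} instead. Your sketch implicitly assumes the same filtration argument goes through unchanged, which it does not.
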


\subsection{Strategy of proof}
The key idea behind the proof is the construction of the quantum model for equivariant Lagrangian Floer theory, together with what may be described as a spectral sequence argument enabled by the finer control of the energy filtration. In more detail, once the quantum model is constructed, we ensure that the differential preserves the energy filtration by a fixed positive quantity $\hbar$. Using pseudoholomorphic quilts and the homotopy-theoretic description of the telescope, a map to the Floer homology of the reduction is built. After verifying that this map also preserves the energy filtration of both Floer complexes by a fixed amount, it suffices to show that the map on the first page of the spectral sequence is an isomorphism. We show that the induced map on the first page is the classical isomorphism between the equivariant homology of a manifold and the homology of its quotient.

\subsection{Organization of the paper}

Section 2 is devoted to preliminaries, such as homological algebra over the Novikov ring, a review of Floer homology for Lagrangians in clean intersection and basics on pseudoholomorphic quilts. Section 3 develops the quantum model of equivariant Lagrangian Floer homology and section 4 constructs the isomorphism of the theorem.

\subsection{Acknowledgements}
This paper is part of the author's PhD thesis, and he is indebted to his supervisor, Claude Viterbo, for invaluable support. The author also wishes to thank Guillem Cazassus, Felix Schmäschke and Mohammed Abouzaid for useful conversations and Dylan Cant for suggesting valuable modifications. Finally he would like to thank Paul Biran, for pointing out an argument in his paper that simplified the zero-energy parts of the maps involved.

\tableofcontents

\section{Preliminaries}

\subsection{Algebra}
Throughout we work with $\Z_2$ as the underlying field.

\subsubsection{Novikov rings and gapped modules}
We define the \emph{universal Novikov ring} as the set of countable sums:
$$\Lambda_0:= \{\sum a_j T^{\lambda_j}\mid a_j\in \Z_2, \lambda_j\ge 0, \lim\limits_{j\to \infty} \lambda_j=+\infty\},$$
where $T$ is a formal variable. The \emph{Novikov field} $\Lambda$ is defined as the localization of $\Lambda_0$ with respect to the variable $T$:
$$\Lambda:= \Lambda_0 [T^{-1}]= \{\sum a_j T^{\lambda_j}\mid a_j\in \Z_2, \lambda_j\in \R, \lim\limits_{j\to \infty} \lambda_j=+\infty\}.$$

The Novikov ring is equipped with a natural decreasing filtration, which we refer to as the \emph{energy} (or \emph{area}) \emph{filtration}. Namely, for $\lambda \in \R$ we set:
$$F^{\lambda}\Lambda_0 =T^{\lambda}\Lambda_{0}= \lbrace \sum a_j T^{\lambda_j}\in \Lambda_0 \mid \lambda_j\geq \lambda\text{ for all }j\rbrace.$$
We denote by $\Lambda_0^{+}$ the ideal of all elements of strictly positive energy in $\Lambda_0$, so that $\Lambda_0\slash \Lambda_0^{+}\cong \Z_2$.

We now introduce \emph{free completed $\Lambda_0$-modules} and recall their basic theory. They already appeared in \cite{FOOO}, section $6$ under the name of ``dgfz.''

Starting with a $\Z_2$-vector space $\overline{C}$ one forms the free $\Lambda_0$-module $\overline{C}\otimes_{\Z_2}\Lambda_0$. It carries a natural filtration inherited from $\Lambda_0$, which we again denote by $F^{\lambda}$. The completion with respect to this filtration, written $C=\overline{C}\hat{\otimes}_{\Z_2}\Lambda_0$, is called a \emph{completed free $\Lambda_0$-module}.

In practice, the completion $C$ can be described as the set of formal sums $\sum v_j T^{\lambda_j}$ where $v_j\in \overline{C}$ and $\lim \lambda_j=+\infty$. Note that $C\slash{\Lambda_0^{+}C}\cong \overline{C}$ and there is a ``zero-energy lift`' $\overline{C}\to C$ by $v\mapsto v\otimes 1$.

If $C,D$ are completed free $\Lambda_0$-modules and $\varphi:C\to D$ is a $\Lambda_0$-linear map, then $\varphi$ induces a map $\overline{\varphi}:C\slash \Lambda_0^{+}C\to D\slash \Lambda_0^{+}D$ and by the aforementioned isomorphism a map $\overline{\varphi}:\overline{C}\to \overline{D}$. Such a map then lifts to a $\Lambda_0$-linear map $\hat{\varphi}:C\to D$ by the zero-energy lift. We say that $\varphi$ is $\hbar$-gapped (here $\hbar>0$ is a constant) if:
$$x\in F^{\lambda}C \implies (\varphi-\hat{\varphi})(x)\in F^{\lambda+\hbar}C.$$
If $\hbar$ is not relevant then we just say that $\varphi$ is \emph{gapped}. This is equivalent to the following statement: if $x$ is a zero-energy element in $C$, then
$$\varphi(x)= \overline{\varphi}(x)+ T^{\hbar}b(x), \hspace{2mm} b(x)\in D.$$

Of particular interest to us will be the case where $\partial:C \to C$ is a differential (i.e. $\partial^2=0$) and $\partial$ is gapped. When the differential is understood we will simply say that $C$ is a \emph{gapped differential completed free $\Lambda_0$-module}, or a ``gdcf.'' Note that the induced map $\overline{\partial}: \overline{C}\to \overline{C}$ is still a differential. A map $\varphi:C\to D$ between two gdcf's is a map that commutes with the differentials. The following proposition is due to \cite[Proposition 4.3.18]{FOOO}.
\begin{prop}\label{Prop1}
  If a map $\varphi:(C,\partial)\to (D, \partial)$ between two gdcf's is gapped, and the induced map:
  $$\overline{\varphi}: \overline{C}\to \overline{D}$$
  induces an isomorphism in homology, then $\varphi_{\ast}:H(C,\partial)\to H(D,\partial)$ is an isomorphism.
\end{prop}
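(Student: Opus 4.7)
The plan is to reduce the statement to the acyclicity of a mapping cone, and then to produce a null-homotopy by perturbing a contracting homotopy at the zero-energy level and inverting a near-identity operator by a geometric (Neumann) series in the $T$-adic topology.

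First I would pass to the mapping cone $\mathrm{Cone}(\varphi)=D\oplus C[1]$ equipped with the standard differential (in characteristic two, signs play no role). Since $\varphi$ is gapped and both $\partial_C,\partial_D$ are gapped, $\mathrm{Cone}(\varphi)$ is again a gdcf, and the reduction functor $(-)/\Lambda_0^+(-)$ commutes with the cone construction, so $\overline{\mathrm{Cone}(\varphi)}=\mathrm{Cone}(\overline{\varphi})$. As $\overline{\varphi}$ is a quasi-isomorphism by hypothesis, $\mathrm{Cone}(\overline{\varphi})$ is acyclic. From the long exact sequence attached to
$$0\to D\to \mathrm{Cone}(\varphi)\to C[1]\to 0,$$
the statement that $\varphi_{\ast}$ is an isomorphism is equivalent to the acyclicity of $\mathrm{Cone}(\varphi)$. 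It therefore suffices to prove: if $(E,\partial)$ is a gdcf and $(\overline{E},\overline{\partial})$ is acyclic, then $(E,\partial)$ is acyclic.

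To prove this reduced statement, I would fix, using that we work over the field $\Z_2$, a null-homotopy $\overline{h}:\overline{E}\to\overline{E}$ with $\overline{\partial}\,\overline{h}+\overline{h}\,\overline{\partial}=\mathrm{id}_{\overline{E}}$, and extend it $\Lambda_0$-linearly via the zero-energy lift to an endomorphism $\hat{h}:E\to E$ (well-defined by $T$-adic convergence on any element $\sum v_j T^{\lambda_j}$ with $\lambda_j\to+\infty$). The $\hbar$-gapped hypothesis on $\partial$, evaluated on zero-energy elements, yields an identity of the form
$$\partial\hat{h}+\hat{h}\,\partial=\mathrm{id}_E+T^{\hbar}\,e$$
for a $\Lambda_0$-linear endomorphism $e$ of $E$. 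Applying $\partial$ on either side of this identity and using $\partial^2=0$ forces $\partial e=e\partial$, so $e$ is a chain map and $\mathrm{id}_E+T^\hbar e$ is a chain endomorphism.

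Finally, $\mathrm{id}_E+T^\hbar e$ is invertible with inverse given by the geometric series $\sum_{k\ge 0}T^{k\hbar}e^k$, which converges in the $T$-adic topology by completeness of $E$; since this inverse commutes with $\partial$, setting $H:=\hat{h}\circ(\mathrm{id}_E+T^\hbar e)^{-1}$ gives
$$\partial H+H\partial=(\partial\hat{h}+\hat{h}\,\partial)\circ(\mathrm{id}_E+T^\hbar e)^{-1}=\mathrm{id}_E.$$
Hence $(E,\partial)$ is contractible, a fortiori acyclic, and combined with the first step this proves the proposition. I expect the main technical point to be the convergence bookkeeping: verifying that $\hat{h}$, $e$, and the Neumann series descend to honest $\Lambda_0$-linear operators on the completion $E$, and that they actually commute with $\partial$. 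Once that is set up, the argument is a sharper form of the classical fact that a complete filtered complex with acyclic associated graded is acyclic, the sharpening provided precisely by the positive gap $\hbar$.
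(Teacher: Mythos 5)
Your argument is correct. Note first that the paper does not actually prove this proposition: it is quoted from \cite[Proposition 4.3.18]{FOOO}, where the standard argument is an induction on the energy filtration (compare the quotients $C/F^{k\hbar}C\to D/F^{k\hbar}D$ by the five lemma at each stage, then pass to the inverse limit using completeness). Your route --- reduce to acyclicity of the mapping cone, lift a contracting homotopy $\overline{h}$ of $\overline{E}$ to $\hat h$, observe $\partial\hat h+\hat h\partial=\mathrm{id}_E+N$ with $N$ raising the filtration by $\hbar$, and invert $\mathrm{id}_E+N$ by a convergent Neumann series commuting with $\partial$ --- is a genuinely different and arguably cleaner proof: it produces an explicit contraction and so avoids any discussion of convergence of the induced spectral sequence or of $\varprojlim^1$ issues. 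The two places where care is needed are exactly as you say: (i) the cone of a gapped map between gdcf's is again a gdcf with $\overline{\mathrm{Cone}(\varphi)}=\mathrm{Cone}(\overline{\varphi})$, which holds after taking $\hbar$ to be the minimum of the three gaps involved, and the exact triangle argument survives in the ungraded setting the paper works in; (ii) rather than literally factoring $N=T^{\hbar}e$ (which requires dividing by $T^{\hbar}$, harmless on a free completed module but an extra step), it suffices to record that $N(F^{\lambda}E)\subset F^{\lambda+\hbar}E$, so that $\sum_{k\ge 0}N^{k}$ converges by completeness and Hausdorffness of the filtration. You also correctly use that over the field $\Z_2$ acyclicity of $\overline{E}$ gives an honest null-homotopy $\overline{h}$; this is where freeness of $\overline{C},\overline{D}$ over $\Z_2$ enters, just as it does implicitly in the FOOO induction.
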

\subsubsection{Telescopes}
Even if we work in the ungraded case, we formulate this section in terms of chain complexes, as is done in \cite{Cazassus}.

Suppose we have a sequence of chain complexes of $\Z_2$-modules $\{C_{\bullet}^{n}\}_{n\in \mathbb{N}}$ together with chain maps $f_{n}:C_{\bullet}^{n}\to C_{\bullet}^{n+1}$. We define the \emph{telescope} of such a sequence as
$$\overline{\Tel}(C_{\bullet}^{n}, f_n):= \bigoplus_{n\in \mathbb{N}} C_{\bullet}^{n}\oplus \bigoplus_{n\in \mathbb{N}} q\cdot C^{n}_{\bullet-1}$$
where $q$ is a formal variable of degree $1$. We set $f:\bigoplus_{n\in \mathbb{N}} C_{\bullet}^{n}\to \bigoplus_{n\in \mathbb{N}} C_{\bullet}^{n}$ to be the direct sum of all the $f_{n}$. We endow $\overline{\Tel}(C_{\bullet}^{n}, f_n)$ with a differential by the formula
$$\partial(x+q\cdot y):= \partial x+ f(y)-y+q\cdot \partial y.$$
Note that we have inclusions $i_n: C^{n}_{\bullet}\to \overline{\Tel}(C_{\bullet}^{n},f_n)$ given by $x\mapsto x=x+q\cdot 0$ and note that the diagrams
$$\begin{tikzcd}
  C^{n}_{\bullet}\arrow{rr}{f_n} \arrow{dr}{i_n}& & C^{n+1}_{\bullet}\arrow{dl}{i_{n+1}}\\
  & \overline{\Tel}(C_{\bullet}^{n},f_n) &
\end{tikzcd}$$
are commutative.
\begin{prop}
  With the differential described above, $\overline{\Tel}(C_{\bullet}^{n},f_n)$ is a \emph{directed homotopy colimit} of the diagram given by the $f_n$'s. In other words, it satisfies the following universal property: For any chain complex $D_{\bullet}$ together with maps $\varphi_n: C^n_{\bullet}\to D_{\bullet}$, such that the diagrams:
  $$\begin{tikzcd}
    C^{n}_{\bullet} \arrow{rr}{f_n}\arrow{dr}{\varphi_n} & & C^{n+1}_{\bullet}\arrow{dl}{\varphi_{n+1}}\\
    & D_{\bullet} &
  \end{tikzcd}$$
  are homotopy commutative, then there is a unique (up to homotopy) map $\Phi: \overline{\Tel}(C_{\bullet}^{n},f_n)\to D_{\bullet}$ such that the relevant diagrams are all homotopy commutative.
\end{prop}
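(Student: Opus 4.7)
The plan is to construct $\Phi$ explicitly from a choice of homotopies witnessing the hypothesis, and then verify both existence and uniqueness by direct chain-level calculations, along the lines of the standard mapping-telescope argument. For existence, the hypothesis provides, for each $n$, a chain homotopy $h_n : C^{n}_{\bullet}\to D_{\bullet+1}$ with
$$\partial h_n + h_n\partial = \varphi_{n+1}\circ f_n + \varphi_n$$
(all signs absorbed since we work over $\Z_2$). Fixing such a choice, I would define $\Phi$ on generators by $\Phi(x):=\varphi_n(x)$ for $x\in C^{n}_{\bullet}$ and $\Phi(q\cdot y):=h_n(y)$ for $y\in C^{n}_{\bullet-1}$, and extend $\Z_2$-linearly.

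Checking that $\Phi$ is a chain map is then essentially a one-line computation: on $C^{n}_{\bullet}$-summands it is just the statement that each $\varphi_n$ is a chain map, and on the $q$-summands the defining formula $\partial(q\cdot y) = f_n(y) - y + q\cdot \partial y$ converts the identity $\Phi\partial = \partial\Phi$ into exactly the chain-homotopy relation satisfied by $h_n$. Commutativity of the triangles $\Phi\circ i_n=\varphi_n$ is immediate from the construction, so in particular they are homotopy commutative.

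For uniqueness, suppose $\Phi'$ is a second map built from a different choice of homotopies $h_n'$, together with homotopies $H_n: C^{n}_{\bullet}\to D_{\bullet+1}$ between $\Phi\circ i_n$ and $\Phi'\circ i_n$. I would construct a chain homotopy $K:\overline{\Tel}_{\bullet}\to D_{\bullet+1}$ with $\partial K+K\partial = \Phi+\Phi'$ by setting $K|_{C^{n}_{\bullet}}:=H_n$ and then choosing $K(q\cdot y)$ so as to cancel the error term produced when one applies the chain-homotopy identity to a generator of the form $q\cdot y$. The resulting constraint reads, for each $y\in C^{n}_{\bullet-1}$,
$$\partial K(q\cdot y)+K(q\cdot \partial y) = h_n(y)+h_n'(y)+H_{n+1}(f_n(y))+H_n(y),$$
and a direct computation, using that $H_n$ is a homotopy between $\Phi\circ i_n$ and $\Phi'\circ i_n$ together with the defining relations of $h_n,h_n'$, shows that the right-hand side is a cocycle. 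A primitive then exists (and can be chosen inductively in $n$) because it is in fact an explicit boundary built out of the $H_n$'s.

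The main obstacle is this bookkeeping in the uniqueness step: one has to juggle four different pieces of homotopy data and exhibit an explicit primitive for the cocycle that appears. The cleanest conceptual packaging is to recognise $\overline{\Tel}(C^{n}_{\bullet},f_n)$ as a semifree resolution of the $\mathbb{N}$-indexed diagram $\{f_n\}$, i.e.\ a cofibrant replacement in the projective model structure on diagrams of chain complexes; from that viewpoint the universal property is a general model-categorical statement and the calculation above is simply its chain-level manifestation. I would include the hand-computation for concreteness, since it is also what is needed later when comparing explicit maps from the telescope.
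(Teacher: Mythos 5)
Your existence argument is exactly the paper's: choose homotopies $h_n$ witnessing $\varphi_{n+1}\circ f_n\simeq\varphi_n$, set $\Phi(x+q\cdot y)=\varphi_n(x)+h_n(y)$, and check the chain-map identity reduces to the homotopy relation on the $q$-summands. That half is fine (and it is in fact all the paper proves; the authors explicitly leave uniqueness to the reader).

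The uniqueness half, however, has a genuine gap at the step ``the right-hand side is a cocycle [hence] a primitive exists.'' Being a cycle in the Hom-complex (i.e.\ satisfying $\partial R_n+R_n\partial=0$, which your $R_n=h_n+h_n'+H_{n+1}f_n+H_n$ does) does not produce a primitive $K(q\cdot\,)$ with $\partial K(q\cdot y)+K(q\cdot\partial y)=R_n(y)$; that requires $R_n$ to be a \emph{boundary} in the Hom-complex, and for a fixed choice of the $H_n$ it need not be. Concretely: take all $C^n=\Z_2$ in degree $0$ with zero differential, $f_n=\id$, $D=\Z_2$ in degree $0$ plus $\Z_2$ in degree $1$ with zero differential, $\varphi_n$ the degree-$0$ inclusion, $h_n$ the nonzero degree-one map, $h_n'=0$, $H_n=0$. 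All homotopy hypotheses hold, yet $R_n=h_n\neq0$ while every expression $\partial K_q+K_q\partial$ vanishes, so no $K$ with $K|_{C^n}=H_n$ exists. The repair is to \emph{re-choose} the $H_n$, correcting each by a degree-one chain map so that the system $(R_n)$ becomes simultaneously trivialisable; whether such a coherent system of corrections exists is exactly a $\varprojlim^1$-type obstruction on the groups of homotopy classes $[C^n_{\bullet+1},D_\bullet]$ under $f_n^{*}$. So the asserted ``explicit boundary built out of the $H_n$'s'' must actually be exhibited (after modifying the $H_n$), and without some hypothesis guaranteeing the vanishing of that obstruction the naive uniqueness-up-to-homotopy statement is more delicate than your sketch suggests. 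Your closing model-categorical remark does not rescue this: cofibrancy of the telescope gives existence of lifts, while uniqueness in the homotopy category is precisely what the $\varprojlim^1$ term measures.
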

\begin{proof}
  We only show existence and leave uniqueness to the reader's verification. Set $H_n$ to be a chain homotopy between $\varphi_n$ and $\varphi_{n+1}\circ f_n$ and define $H:\bigoplus_{n\in \mathbb{N}} C^{n}_{\bullet}\to D_{\bullet+1}$ to be their direct sum. Now construct:
  \begin{eqnarray*}
    \Phi: \overline{\Tel}(C_{\bullet}^{n},f_n)& \to & D_{\bullet}\\
    x+ q\cdot y & \mapsto & \varphi(x)+ q\cdot H(y)
  \end{eqnarray*}
  where $\varphi(x)=\bigoplus_{n\in \mathbb{N}} \varphi_n$. We check that it is a chain map:
  \begin{align*}
    \Phi\circ \partial (x+q\cdot y)&= \Phi(\partial x+ f(y)-y+q\cdot \partial y)\\
                                   &= \varphi(\partial x)+ \varphi(f(y))-\varphi(y)+H(\partial y)\\
                                   &= \partial \varphi(x)+\partial H(y)\\
                                   &= \partial \circ \Phi(x+q\cdot y).
  \end{align*}
  The commutativity relations are easily checked.
\end{proof}
By virtue of being a filtered homotopy colimit, it is known that
$$H_{\ast}(\overline{\Tel}(C_{\bullet}^{n},f_n))\cong \lim\limits_{n\to \infty}H_{\ast}(C^{n}_{\bullet})$$
where the term on the right is just the direct limit of modules.

Moving onto the realm of free completed $\Lambda_0$-modules, given a sequence $\{C_{\bullet}^{n}\}_{n\in \mathbb{N}}$ of such modules together with $\Lambda_0$-linear maps $f_{n}:C_{\bullet}^{n}\to C_{\bullet}^{n+1}$, we define the \emph{completed telescope} as the completion of the usual telescope
$$\Tel(C_{\bullet}^{n},f_n):= \widehat{\overline{\Tel}}(C_{\bullet}^{n},f_n)=\overline{\Tel}(\overline{C_{\bullet}^{n}},f_n)\hat{\otimes}_{\Z_2}\Lambda_0.$$
It is a free completed $\Lambda_0$-module and satisfies a similar universal property as before, except this time $D_{\bullet}$ has to be a completed free $\Lambda_0$-module.

Note that if all the $f_n$ are gapped by a common constant, so is their telescope, and if all the chain homotopies in the universal property are gapped by the same constant, then so is the induced map.

\subsection{Symplectic geometry}

\subsubsection{Hamiltonian actions}
Let $(X,\omega)$ be a symplectic manifold. We follow the convention that if $H:X\to \R$ is a smooth map, the \emph{Hamiltonian} vector field associated to $H$, denoted by $X_H$ satisfies
$$\omega(X_H, \_ )= dH(\_ ).$$

Fix a compact Lie group $G$ with Lie algebra $\mathfrak{g}= T_e G$. We denote by $\mathfrak{g}^{\ast}$ the linear dual $\Hom(\mathfrak{g},\R)$ and the canonical pairing by
\begin{eqnarray*}
  \langle \cdot, \cdot \rangle :\mathfrak{g}^{\ast}\otimes \mathfrak{g}& \to & \R\\
  \alpha\otimes \xi & \mapsto & \alpha(\xi).
\end{eqnarray*}
There is a canonical representation of $G$ on $\mathfrak{g}$, the \emph{adjoint representation}, given by the formula:
$$\Ad_g \xi := \frac{d}{dt}\big\rvert_{t=0}g\exp(t\xi)g^{-1}.$$
The dual action, referred to as the \emph{coadjoint action}, is defined by:$$\langle \Ad_{g}^{\ast}\alpha, \xi\rangle= \langle \alpha, \Ad_{g^{-1}} \xi\rangle.$$

If $G$ acts on the left on a smooth manifold $M$, we write $L_g: M\to M$ for the map $p\mapsto g\cdot p$. Given an element $\xi\in \mathfrak{g}$, the action induces a \emph{fundamental vector field}:
$$X_{\xi}(p):= \frac{d}{dt}\big\rvert_{t=0} \exp(t\xi)\cdot p.$$

If $G$ acts on $X$ by symplectomorphisms (i.e. $L_g^{\ast}\omega=\omega$) we say the action is \emph{Hamiltonian} if in addition there exists a smooth map $\mu:X\to \mathfrak{g}^{\ast}$, called the \emph{moment map}, such that:
\begin{itemize}
\item for all $\xi \in \mathfrak{g}$, the fundamental vector field $X_{\xi}$ is a Hamiltonian vector field for the map $p\mapsto \langle \mu(p),\xi\rangle$; in symbols:
  $$\omega(X_{\xi},\_ )= d\langle \mu,\xi\rangle.$$
\item $\mu$ is $G$-equivariant with respect to the coadjoint action.
\end{itemize}
Note that the subspace $\mu^{-1}(0)$ is $G$-invariant. If the action of $G$ on this subspace is free, then $\mu$ is automatically submersive at $0$ and $\mu^{-1}(0)$ is a smooth submanifold, and so is the quotient $\mu^{-1}(0)\slash G$. We denote by $i:\mu^{-1}(0)\to X$ the inclusion and by $\pi: \mu^{-1}(0)\to \mu^{-1}(0)\slash G$ the quotient map.
\begin{theorem}[Marsden-Weinstein reduction]
  If the action of $G$ on $\mu^{-1}(0)$ is free, then there exists a unique symplectic form $\omega_{red}$ on $\mu^{-1}(0)\slash G$ such that
  $$i^{\ast}\omega= \pi^{\ast}\omega_{red}.$$
\end{theorem}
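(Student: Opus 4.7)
The plan is to first dispose of uniqueness and then construct $\omega_{red}$ by showing that $i^{\ast}\omega$ descends through the quotient $\pi\colon \mu^{-1}(0)\to \mu^{-1}(0)/G$.

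For uniqueness, note that $\pi$ is a surjective submersion (the action is free and proper since $G$ is compact), so $\pi^{\ast}$ is injective on differential forms; hence any two candidates agreeing after pullback must coincide. For existence, I would observe that since $0$ is a regular value of $\mu$ (freeness of the action implies that $d\mu_p$ is surjective, as $\langle d\mu_p\cdot v,\xi\rangle=\omega(X_\xi(p),v)$ and $\xi\mapsto X_\xi(p)$ is injective), the level set $\mu^{-1}(0)$ is a smooth $G$-invariant submanifold with $T_p\mu^{-1}(0)=\ker d\mu_p$, and the quotient $\mu^{-1}(0)/G$ is a smooth manifold.

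The central computation is the following: for $v\in T_p\mu^{-1}(0)$ and any $\xi\in\g$, the moment map condition gives
\[
(i^{\ast}\omega)_p(v,X_\xi(p)) = \omega_p(v,X_\xi(p)) = -d\langle\mu,\xi\rangle_p(v)=0,
\]
so the tangent space to the $G$-orbit through $p$ lies in the kernel of $(i^{\ast}\omega)_p$. Conversely, a dimension count shows equality: the symplectic orthogonal $(T_p\mu^{-1}(0))^{\omega}$ has dimension $\dim\g$, and it clearly contains the orbit tangent space, which also has dimension $\dim\g$ because the action is free; so the kernel of $(i^{\ast}\omega)_p$ inside $T_p\mu^{-1}(0)$ is exactly the orbit tangent space. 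Since $i^{\ast}\omega$ is $G$-invariant (as both $i$ and $\omega$ are) and its kernel coincides with the vertical tangent bundle of $\pi$, standard descent for differential forms under a principal bundle produces a unique $2$-form $\omega_{red}$ on $\mu^{-1}(0)/G$ with $\pi^{\ast}\omega_{red}=i^{\ast}\omega$.

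It remains to verify that $\omega_{red}$ is symplectic. Closedness follows from $\pi^{\ast}(d\omega_{red})=d(\pi^{\ast}\omega_{red})=d(i^{\ast}\omega)=i^{\ast}d\omega=0$ and injectivity of $\pi^{\ast}$. For non-degeneracy, if $[v]\in T_{[p]}(\mu^{-1}(0)/G)$ satisfies $\omega_{red}([v],[w])=0$ for all $[w]$, then any lift $v\in T_p\mu^{-1}(0)$ pairs trivially with all of $T_p\mu^{-1}(0)$ under $i^{\ast}\omega$, so $v$ lies in the kernel computed above, i.e.\ $v$ is tangent to the orbit and $[v]=0$. The only step requiring genuine care is the kernel/dimension computation together with checking that descent through a principal $G$-bundle really produces a smooth form; I expect this to be the main (purely geometric) obstacle, but it is a completely standard argument once the moment map identity is in hand.
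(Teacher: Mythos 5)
Your proof is correct and is the standard argument for the Marsden--Weinstein theorem: show $0$ is a regular value via freeness, identify the kernel of $i^{\ast}\omega$ with the orbit directions (the inclusion from the moment map identity, the equality from the dimension count $\dim(T_p\mu^{-1}(0))^{\omega}=\dim\mathfrak{g}$), and descend the basic form through the principal bundle. The paper states this classical result without proof, so there is no argument to compare against; your write-up fills that in correctly, with the sign conventions matching the paper's convention $\omega(X_H,\cdot)=dH$.
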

We write $X\slash \slash G$ instead of $\mu^{-1}(0)\slash G$ (even if it is \emph{not} a quotient of the whole of $X$) and call it the \emph{reduction} of $X$ by $G$.

\begin{example}\label{ExCot}
  If $M$ is a closed smooth manifold with a left $G$-action, then the cotangent bundle $T^{\ast}M$ inherits a Hamiltonian action:
  $$g\cdot (q,p):= \left( g\cdot q, L_{g^{-1}}^{\ast}p\right)$$
  with moment map $\langle\mu(q,p),\xi\rangle= p(X_{\xi}(q))$. If the action on $M$ is free, so is the action on $T^{\ast}M$ and the reduction is identified to the cotangent bundle of the quotient:
  $$(T^{\ast}M)\slash \slash G\cong T^{\ast}(M\slash G).$$
\end{example}
\begin{obs}\label{RemarkDiag}
  If $G$ acts Hamiltonianly on $X$ and $Y$, then the diagonal action on $X\times Y$ is Hamiltonian with moment map
  $$\mu= \mu_{X}+\mu_{Y}.$$
\end{obs}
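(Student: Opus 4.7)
The plan is to verify directly the two defining conditions of a Hamiltonian action with the candidate moment map $\mu := \mu_X + \mu_Y$ (really $\mu_X \circ p_X + \mu_Y \circ p_Y$, where $p_X, p_Y$ are the projections). First I would observe that $X \times Y$ is symplectic with form $\omega = p_X^{\ast}\omega_X + p_Y^{\ast}\omega_Y$, and that the diagonal action is symplectic because each factor action is: $L_g^{\ast}\omega = L_g^{\ast}p_X^{\ast}\omega_X + L_g^{\ast}p_Y^{\ast}\omega_Y = p_X^{\ast}L_g^{\ast}\omega_X + p_Y^{\ast}L_g^{\ast}\omega_Y = \omega$.

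Next I would compute the fundamental vector field of $\xi \in \g$ for the diagonal action. Since $\exp(t\xi)$ acts componentwise, differentiating at $t=0$ gives $X_{\xi}(x,y) = (X_{\xi}^X(x), X_{\xi}^Y(y))$ under the natural splitting $T_{(x,y)}(X\times Y) \cong T_x X \oplus T_y Y$. For any tangent vector $(u,v)$, one then checks
\begin{align*}
\omega\bigl(X_{\xi}(x,y), (u,v)\bigr) &= \omega_X(X_{\xi}^X, u) + \omega_Y(X_{\xi}^Y, v) \\
&= d\langle \mu_X, \xi\rangle(u) + d\langle \mu_Y, \xi\rangle(v) \\
&= d\langle \mu_X + \mu_Y, \xi\rangle(u,v),
\end{align*}
using the Hamiltonian property of $\mu_X$ and $\mu_Y$ on each factor. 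This establishes the first axiom.

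For equivariance, since $\mu_X$ and $\mu_Y$ are individually equivariant and $\Ad_g^{\ast}$ is linear on $\g^{\ast}$, I would compute
\[\mu\bigl(g\cdot(x,y)\bigr) = \mu_X(g\cdot x) + \mu_Y(g\cdot y) = \Ad_g^{\ast}\mu_X(x) + \Ad_g^{\ast}\mu_Y(y) = \Ad_g^{\ast}\mu(x,y).\]
There is no real obstacle here: the only step requiring any care is to be honest about the identification of the tangent bundle of the product as a direct sum and the fact that the product symplectic form is block-diagonal in this splitting, which is what makes both computations decouple cleanly between the $X$ and $Y$ factors.
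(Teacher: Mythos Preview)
Your proof is correct. The paper states this as a remark without proof, treating it as a standard elementary fact; your direct verification of the two moment-map axioms is exactly the routine argument one would give.
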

\begin{obs}[Marsden-Weinstein correspondence]
  The image of the map
  $$\iota\times \pi: \mu^{-1}(0)\rightrightarrows X\times \overline{X\slash \slash G}$$
  is a Lagrangian correspondence which we call the Marsden-Weinstein correspondence.
\end{obs}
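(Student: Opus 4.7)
The plan is to unpack the three pieces of the definition of a Lagrangian correspondence, namely that the image is (i) a closed embedded submanifold, (ii) of dimension equal to half of $\dim(X\times \overline{X//G})$, and (iii) isotropic with respect to $\omega_{X}\oplus(-\omega_{red})$. Since the stated theorem already guarantees that $\mu^{-1}(0)$ is a smooth submanifold of $X$ and that $X//G$ is a smooth symplectic manifold, I can treat $(\iota,\pi)$ as a smooth map between known manifolds and verify these three items directly.

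First I would check (i). The map $(\iota,\pi):\mu^{-1}(0)\to X\times X//G$ is injective because its first component $\iota$ is; its differential is injective for the same reason, so it is an immersion. Moreover the composition of $(\iota,\pi)$ with projection to the first factor recovers $\iota$, which is a closed embedding, so $(\iota,\pi)$ itself is a closed embedding and its image is a closed submanifold diffeomorphic to $\mu^{-1}(0)$.

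Next I would check (ii) by a dimension count: writing $2n=\dim X$ and $k=\dim G$, the freeness assumption makes $\mu$ a submersion at $0$, so $\dim\mu^{-1}(0)=2n-k$, while $\dim X//G=2n-2k$ and hence
$$\tfrac{1}{2}\dim(X\times \overline{X//G}) = \tfrac{1}{2}\bigl(2n+(2n-2k)\bigr) = 2n-k.$$
Finally for (iii), the pullback of the ambient form is
$$(\iota,\pi)^{\ast}\bigl(\omega_{X}\oplus(-\omega_{red})\bigr) = \iota^{\ast}\omega - \pi^{\ast}\omega_{red},$$
which vanishes by the Marsden--Weinstein theorem recalled just above. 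Since the image is half-dimensional and isotropic, it is Lagrangian.

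There is no real obstacle here; the only point requiring a moment of care is the embedding property, but it is handled by observing that projection to the first factor provides a smooth inverse from the image back onto $\mu^{-1}(0)$. Everything else is either a dimension count or a direct appeal to the defining identity $\iota^{\ast}\omega=\pi^{\ast}\omega_{red}$.
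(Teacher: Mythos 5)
Your proof is correct, and it is the standard verification that the paper leaves implicit (the statement appears only as an unproved remark immediately after the Marsden--Weinstein theorem): the image is the graph of $\pi$ over $\mu^{-1}(0)$, hence an embedded submanifold of the right dimension $2n-k$, and isotropy follows directly from $\iota^{\ast}\omega=\pi^{\ast}\omega_{red}$ together with the sign reversal built into $\overline{X\slash\slash G}$.
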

There is a bijection between:
\begin{itemize}
\item $G$-invariant Lagrangians contained in $\mu^{-1}(0)$, and
\item Lagrangians in $X//G$,
\end{itemize}
where the bijection is given by projection $\pi$.

\subsubsection{Floer homology of Lagrangians in clean intersection}
\label{sec:floer-homology-of-lagrangians-in-clean-inntersection}

We recall here the basics of Floer homology of Lagrangians that intersect cleanly. Recall that this means that $L_0\cap L_1$ is a smooth manifold for each $p\in L_0\cap L_1$ we have the equality
$$T_p(L_0\cap L_1)= T_p L_0 \cap T_p L_1.$$
Intuitively, if Floer theory corresponds to Morse theory of the action functional, clean intersection Floer theory corresponds to Morse-Bott theory of the action functional. This was sketched in \cite[Appendix C]{Frauenfelder} and a more thorough exposition in the monotone case can be found in \cite{Schmaschke}. The work of \cite{Biran-Cornea} develops the same ideas in the case $L_0=L_1$.

At this stage, we state two important hypotheses: First, we assume that $L_0$ and $L_1$ are closed and relatively exact:
\begin{equation}\label{RE}\tag{RE}
  \omega\cdot \pi_2(X,L_j)=0, \ j=0,1.
\end{equation}
Second, if $X$ is non-compact, we assume that it is \emph{convex at infinity}. This means there exists a proper map $\rho:X\to [0,\infty)$ and an $\omega$-tame almost complex structure $J_{C}$ such that, outside of a compact set, $\rho$ is $J_{C}$-weakly plurisubharmonic.

Third, we make the following technical hypothesis
\begin{equation}\label{TH}\tag{TH}
	\begin{split}
	\text{For any map } u:[0,1]\times[0,1]\to X\text{ with }\\ u(s,0)\subset L_0, u(s,1)\subset L_1, u(t,0)=u(t,1)\\
	\text{we have that } \omega(u)>0\Rightarrow \operatorname{Mas}(u)\geq 2.
	\end{split}
\end{equation}
Here $\operatorname{Mas}(u)$ denotes the Maslov index associated to a loop of pairs of Lagrangians, see for example \cite[Appendix C]{Schmaschke}. This hypothesis is not strictly necessary and it is made to ease the exposition. We explain in Appendix \ref{Appendix} how to remove hypothesis \eqref{TH}.

Moving onto the description of the complex: first, we decompose the intersection $L_0\cap L_1$ into its connected components:
$$L_0\cap L_1\cong \bigsqcup C_j$$
and fix a Morse function $f$ and a Riemannian metric $g$ on $L_0\cap L_1$ such that together they form a Morse-Smale pair (i.e. the stable and unstable manifold of any pair of critical points intersect transversely). We write $\psi^{t}$ to be the time $t$ flow of $-\nabla_g f$, and denote the stable and unstable manifolds of a critical point $p$ as $W^{s}(p)$ and $W^{u}(p)$ respectively. Set:
\begin{equation*}
\text{$\widetilde{\mathcal{M}}(p,q)=W^{u}(p)\cap W^{s}(q)$ and $\mathcal{M}(p,q)=\widetilde{\mathcal{M}}(p,q)\slash \R$},
\end{equation*}
where $\R$ acts by reparametrization of the flow lines.

Introduce the strip $\Theta:= \{s+it\mid t\in [0,1]\}\subset \C$, with its standard complex structure. Write $\mathcal{J}_{\operatorname{adm}}(X)$ for space of \emph{admissible} compatible almost-complex structures on $X$: outside some compact set, $J$ agrees with the fixed convex-at-infinity almost complex structure $J_C$.

Given a path $J_t\in \mathcal{J}_{\operatorname{adm}}(X)$, a \emph{Floer strip} is a smooth map $u:\Theta\to X$ such that:
\begin{equation}\label{FS}\tag{FS}
  \begin{cases}
    \frac{\partial u}{\partial s}+ J_t \frac{\partial u}{\partial t}=0\hspace{2mm} (du^{0,1}=0),\\
    u(s,0)\subset L_0, u(s,1)\subset L_1,\\
    E(u):= \frac{1}{2}\int_{\Theta} |du|^2 <\infty.
  \end{cases}
\end{equation}
The quantity $E(u)$ is called the \emph{energy} of $u$. For solutions of \eqref{FS}, the energy is a purely topological quantity:
$$E(u)=\int_{\Theta} u^{\ast}\omega\geq 0.$$
Note that $E(u)=0$ if and only if $u$ is constant.
\begin{prop}
  Any solution of \eqref{FS} converges exponentially fast as $s\to \pm \infty$ to intersection points in $L_0\cap L_1$.
\end{prop}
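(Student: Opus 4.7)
The plan is to follow the standard Morse--Bott-type convergence argument for Floer strips in clean intersection, which is worked out in detail in \cite{Schmaschke} and sketched in \cite[Appendix C]{Frauenfelder}. I would proceed in three steps.

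First, I would establish a $C^{0}$-bound on the image of $u$ using the convex-at-infinity hypothesis. Since $\rho$ is $J_{C}$-plurisubharmonic outside a compact set and any $J\in \mathcal{J}_{\operatorname{adm}}(X)$ agrees with $J_{C}$ outside such a set, the usual maximum principle applied to $\rho\circ u$, together with compactness of $L_{0}$ and $L_{1}$, forces the image $u(\Theta)$ to lie in a fixed compact set $K\subset X$. (When $X$ itself is compact there is nothing to do.)

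Second, I would convert the finite energy hypothesis into pointwise decay of $|du|$. Because $E(u)<\infty$, the energy over the strip $[s,s+1]\times[0,1]$ tends to zero as $|s|\to\infty$, and the $\epsilon$-regularity / mean value inequality for $J$-holomorphic strips with Lagrangian boundary (applied to interior discs and boundary half-discs of fixed radius) upgrades this into $\|du\|_{C^{0}([s,s+1]\times[0,1])}\to 0$. Combining this with the $C^{0}$-bound and Arzelà--Ascoli, any sequence $s_{n}\to +\infty$ admits a subsequence along which $u(s_{n}+\cdot,\cdot)$ converges in $C^{0}_{\mathrm{loc}}$ to a path $\gamma:[0,1]\to K$; since $|\partial_{s}u|\to 0$, equation \eqref{FS} forces $\partial_{t}\gamma\equiv 0$, so $\gamma$ is the constant path at some $p\in L_{0}\cap L_{1}$.

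The hard step is the third: upgrading this subsequential convergence to a unique limit with an exponential rate. Here the clean intersection hypothesis enters decisively. In Morse--Bott language, the linearized Cauchy--Riemann operator on the trivial strip over $p\in L_{0}\cap L_{1}$, viewed as an unbounded self-adjoint operator on $H^{1}$-sections with boundary values in $TL_{0}$ and $TL_{1}$, has kernel equal to $T_{p}(L_{0}\cap L_{1})$ \emph{precisely} by the clean intersection condition, and its nonzero spectrum is bounded away from zero by a constant $\delta>0$ which, by compactness of $L_{0}\cap L_{1}$, can be chosen uniformly in $p$. This spectral gap, via a Fourier decomposition in the $t$-variable, translates into a differential inequality of the form $E''(s)\geq \delta^{2} E(s)$ for the tail energy $E(s):=\tfrac{1}{2}\int_{[s,\infty)\times[0,1]}|du|^{2}$, yielding $E(s)\leq C e^{-\delta s}$. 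Elliptic bootstrapping then promotes this to $C^{k}$-exponential decay of $u(s,\cdot)$ to a single point of $L_{0}\cap L_{1}$, and the case $s\to -\infty$ is symmetric.
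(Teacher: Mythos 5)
Your outline is correct and is, in substance, the argument of the reference that the paper itself invokes for this statement: the paper's ``proof'' is just the citation to \cite[Section 3]{Schmaschke}, whose proof proceeds exactly via a $C^{0}$ bound from convexity, $\epsilon$-regularity converting finite energy into gradient decay, and the Morse--Bott/spectral-gap analysis at the clean intersection (via Po\'zniak's local normal form) to obtain a unique limit point with exponential rate. No issues.
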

\begin{proof}
  See \cite[Section 3]{Schmaschke}.
\end{proof}
Given a Floer strip $u$, we denote by $u(+\infty)$, respectively $u(-\infty)$, the corresponding intersection points.

A \emph{parametrized cascade} (with $m$ jumps) from $p$ to $q$ (here $p,q\in \crit(f)$) representing a homotopy class $A$ is a tuple $(u_1,\dots, u_m)$ such that:
\begin{enumerate}
\item each $u_k$ is a non-constant Floer strip,
\item for each $j\in \{1,\dots, m-1\}$ there exists $t_j\geq 0$ such that
  $$\psi^{t_j}(u_{j}(+\infty))=u_{j+1}(-\infty).$$
\item We have $u_1(-\infty)\in W^{u}(p)$ and $u_{m}(+\infty)\in W^{s}(q)$.
\item No $u_k$ is a reparametrization of another $u_j$, i.e., if $k\neq j$, there does not exist $s$ such that $u_k(s+\cdot )= u_j(\cdot )$. Such a tuple will be called \emph{distinct}.
\item The sum of all homotopy classes of the $u_i$ represents $A$.
\end{enumerate}
We denote by $\widetilde{\mathcal{M}}_{m}(p,q,A)$ the space of such cascades, and observe that $\R^{m}$ acts on it by $(r_1,\dots, r_m)\cdot (u_1,\dots, u_m)= (u_1(r_1+\cdot),\dots, u_{m}(r_m+\cdot))$. Let $\mathcal{M}_{m}(p,q,A)$ be the quotient by this action: this is the space of \emph{cascades with $m$ jumps} from $p$ to $q$ and homotopy class $A$. We extend the definition by setting $\mathcal{M}_{0}(p,q)=\mathcal{M}(p,q)$ (recall this is the space of Morse flow lines from $p$ to $q$). Finally, we consider the union of all such spaces:
$$\mathcal{M}(p,q,A):=\bigcup_{m\in \mathbb{N}}\mathcal{M}_{m}(p,q,A).$$
See Figure \ref{figure:cascade-3-jumps} for an illustration of an element of $\mathcal{M}(p,q,A)$.

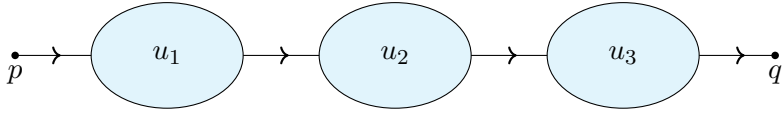
\begin{figure}[h]
  \centering
  \begin{tikzpicture}[scale=1]
    \path[decorate,decoration={
      markings,
      mark=between positions 0.06 and 1 step 0.3 with {\arrow{>[scale=1.3,line width=.8pt]};},
    }] (0,0) to (10,0);
    \draw (0,0) node[below] {$p$} node[fill, inner sep=1pt, circle] {}-- (10,0)node[fill, inner sep=1pt, circle] {}node[below]{$q$};
    \draw[fill=white!90!cyan] (2,0) circle (1 and 0.7) node{$u_{1}$} (5,0) circle (1 and 0.7) node{$u_{2}$} (8,0) circle (1 and 0.7) node{$u_{3}$};
  \end{tikzpicture}
  \caption{A cascade from $p$ to $q$ with three jumps.}
  \label{figure:cascade-3-jumps}
\end{figure}

\begin{theorem}\label{TransversalityI}
  For a generic path $J_t\in \mathcal{J}_{\operatorname{adm}}(X)$, the spaces $\mathcal{M}(p,q,A)$ are smooth manifolds with corners of the expected dimension:
  $$\dim \mathcal{M}_{u}(p,q,A)= \ind_f(x)-\ind_f(y)-1+\operatorname{Mas}(u),$$
  whenever this number is $\leq 1$. Here $\operatorname{Mas}$ denotes the Maslov-Viterbo index of $u$; see \cite[Appendix C]{Schmaschke}.

  Furthermore, when this dimension is zero the resulting manifold is compact, hence a finite set, and when this dimension is $1$, the manifold $\mathcal{M}_{[1]}(p,q,A)$ admits a compactification whose boundary is given by
  $$\partial \overline{\mathcal{M}}_{[1]}(p,q,A)\cong \bigsqcup_{\stackrel{z}{B+C=A}}\mathcal{M}_{[0]}(p,z,B)\times \mathcal{M}_{[0]}(z,q,C).$$
  (Here the brackets denote the component of the specified dimension).
\end{theorem}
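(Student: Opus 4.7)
The plan is to follow the standard three-step template for Morse-Bott-type Floer theory adapted to cleanly intersecting Lagrangians by Schm\"aschke, treating each stratum in turn. For transversality, first I would set up the universal moduli space of non-constant Floer strips in each homotopy class $A$ as a Banach manifold, and verify that the projection to $\mathcal{J}_{\operatorname{adm}}(X)^{[0,1]}$ is Fredholm with index equal to the Maslov-Viterbo index of $A$. By Sard-Smale, generic $J_{t}$ make $\widetilde{\mathcal{M}}(A)$ into a smooth manifold of the expected dimension. A second genericity argument, perturbing $J_{t}$ only in the interior of $X$ so as to preserve the convex-at-infinity condition, ensures that the asymptotic evaluation maps $\operatorname{ev}_{\pm\infty}\colon \widetilde{\mathcal{M}}(A)\to L_{0}\cap L_{1}$ are mutually transverse to the stable and unstable manifolds of the Morse-Smale pair $(f,g)$. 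Combined with the Morse-Smale property itself, this realises each $\widetilde{\mathcal{M}}_{m}(p,q,A)$ as a transverse fibre product of Floer-strip moduli with Morse half-flows of lengths $t_{j}\geq 0$ and with $W^{u}(p)$, $W^{s}(q)$. Quotienting by the $\R^{m}$-reparametrisation action and taking the union over $m$ yields the smooth manifold structure on $\mathcal{M}(p,q,A)$, and a bookkeeping count of the strip contributions, matching conditions and quotient reproduces the stated dimension formula.

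For compactness, given a sequence in $\mathcal{M}(p,q,A)$, Gromov-Floer compactness in the clean intersection setting \cite{Schmaschke} produces a subsequential limit cascade, possibly decorated with sphere or disc bubbles. Hypothesis \eqref{RE} excludes all disc and sphere bubbling, since any bubble would carry strictly positive $\omega$-area while $\omega$ vanishes on $\pi_{2}(X,L_{j})$. Hypothesis \eqref{TH} forces every remaining topological component to have Maslov index at least $2$, so no hidden codimension-$1$ degeneration escapes our list. Convexity at infinity supplies an a priori $C^{0}$ bound via the maximum principle applied to $\rho\circ u_{n}$, preventing sequences from escaping to infinity. A $0$-dimensional manifold that is compact is necessarily finite, which yields the expected-dimension-zero statement.

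For the boundary of the one-dimensional stratum, the codimension-$1$ degenerations compatible with the fixed data $(p,q,A)$ are: (i) breaking of the initial Morse half-orbit inside $W^{u}(p)$ at a critical point $z$; (ii) breaking of the terminal Morse half-orbit inside $W^{s}(q)$ at $z$; and (iii) a connecting parameter $t_{j}\to\infty$, which forces the cascade to split at an intermediate critical point $z$. All three yield factorisations of the form $\mathcal{M}_{[0]}(p,z,B)\times \mathcal{M}_{[0]}(z,q,C)$ with $B+C=A$, matching the uniform product formula in the statement (with $B=0$ in case (i) and $C=0$ in case (ii)). Importantly, the apparent breaking of a single Floer strip into two is \emph{not} a boundary phenomenon: the limit is a cascade with one additional jump whose new intermediate parameter is $t_{j}=0$, and it lies in the \emph{interior} of $\mathcal{M}(p,q,A)$; the distinctness condition (4) guarantees this configuration is counted exactly once. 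A standard gluing argument, as in \cite{Schmaschke}, then produces a smooth collar around each boundary stratum.

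I expect the genuinely delicate step to be combining the double transversality of the first step with the convex-at-infinity constraint: one must work with $J_{t}$ of class $C^{\varepsilon}$ agreeing with $J_{C}$ outside a fixed compact set, and argue that perturbations supported in the interior already provide enough freedom both to cut out $\widetilde{\mathcal{M}}(A)$ transversely and to realise the evaluation maps transversely to the Morse-Smale data. The remaining analysis is a transcription of Schm\"aschke's monotone arguments to the relatively exact setting, with \eqref{RE} and \eqref{TH} entering only through the compactness step.
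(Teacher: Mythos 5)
Your overall architecture matches the paper's: transversality via a universal moduli space and Sard--Smale, realising $\widetilde{\mathcal{M}}_m(p,q,A)$ as a fibre product of the strip moduli (through the asymptotic evaluation maps) with $W^u(p)$, $W^s(q)$ and the flow parameters $t_j\in[0,\infty)$; compactness via Gromov--Floer with \eqref{RE} killing bubbles and convexity supplying $C^0$ bounds; and the key observation that strip breaking is paired with the shrinking of a connecting flow line (the $t_j=0$ corner of the stratum with one more jump), so that after gluing the strata only gradient-line breaking survives as genuine boundary. All of that is the paper's argument.

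There is, however, one concrete gap: your treatment of the restriction to \emph{distinct} tuples and of hypothesis \eqref{TH}. The Sard--Smale step for tuples $(u_1,\dots,u_m)$ only works on the locus where no $u_k$ is a reparametrisation of another $u_j$ (otherwise a perturbation of $J_t$ moves both strips simultaneously and the universal linearised operator need not be surjective); the moduli space is therefore defined, and cut out transversely, only over distinct tuples. The danger is then in compactness: a sequence of \emph{distinct} cascades can converge to a limit cascade in which two strips coincide up to reparametrisation, and such a limit is not covered by your list of codimension-one degenerations, nor is it transversely cut out. Your one-line appeal to \eqref{TH} (``forces every remaining topological component to have Maslov index at least $2$'') misstates what the hypothesis says: \eqref{TH} concerns annuli, i.e.\ maps $u:[0,1]^2\to X$ with boundary on $L_0,L_1$ whose two ends agree, asserting that positive area implies Maslov index $\geq 2$. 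The actual argument is: if two strips in the limit coincide, concatenating the cascade from the incoming end of the first to the outgoing end of the second produces exactly such an annulus of positive area, hence of Maslov index $\geq 2$; deleting it leaves a distinct (hence transversely cut out) cascade whose dimension has dropped by $2$, i.e.\ is $-1$ when the original dimension was $\leq 1$, so the configuration is empty. Without this step your compactification could have boundary strata not of the product form claimed, and the proof that $\partial^2=0$ would fail. (This is also why the paper needs a separate appendix, with tree-dependent almost complex structures, to dispense with \eqref{TH}.)
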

\begin{proof}
  We only sketch the proof, this carried out in great detail in \cite[Section 6]{Schmaschke}.

  First we treat the transversality statement. Given a tuple of connected components $C:=C_{1},\dots, C_{2m}$ of $L_0\cap L_1$, we form a Banach space of tuples $(u_1,\dots, u_{m})$ of maps $u\in W^{1,p}_{\delta}(\Theta,X)$ with appropriate boundary conditions and such that $u_{k}(-\infty)\in C_{2k-1}, u_{k}(+\infty)\in C_{2k}$. Here $\delta$ denotes a positive constant such that $u_k$ converges $\delta$-exponentially fast to tangent vectors at $\pm\infty$. This is required to ensure that the Cauchy-Riemann operator is Fredholm, at least if $\delta$ is smaller than a fixed constant depending on $J$. We then we construct the Cauchy-Riemann operator:
  $$(u_1,\dots, u_m)\mapsto (\overline{\partial}_J u_1,\dots, \overline{\partial}_J u_m )\in L^{p}_{\delta}(u_1^{\ast}TX\otimes \Omega^{0,1}_{\Theta})\oplus\dots \oplus L^{p}_{\delta}(u_m^{\ast}TX \otimes \Omega^{0,1}_\Theta).$$

  Using a universal moduli space and the Sard-Smale theorem, one shows that restricted to the space of \emph{distinct tuples}, the subspace of paths of almost complex structures $J_{t}\in \mathcal{J}_{\operatorname{adm}}(X)$ such that:
  \begin{itemize}
  \item the linearization of the Cauchy-Riemann operator is surjective,
  \item the evaluation maps are submersive,
  \end{itemize}
  is of the second Baire category: a countable intersection of open dense sets. Since $\mathcal{J}_{\operatorname{adm}}$ is a Fréchet space this ensures this set is dense. We refer to the elements of this set as \emph{regular almost complex structures}.

  This ensures that for generic $J_{t}$, the space of distinct $J_t$-holomorphic tuples is a smooth manifold and the evaluation maps are submersions, which we denote $\mathcal{M}'_m(C)$. Then $\widetilde{\mathcal{M}}_{m}(p,q)$ may be described as the fiber product:
  $$\begin{tikzcd}
    \widetilde{\mathcal{M}}_{m}(p,q) \arrow{r} \arrow{d} & \mathcal{M}'_{m}(C)\arrow{d}{\operatorname{ev}}\\
    W^{u}(p)\times (L_0\cap L_1)^{m-1}\times [0,\infty)^{m-1}\times W^{s}(q) \arrow{r}{\Psi} & (L_0\cap L_1)^{2m}
  \end{tikzcd}$$
  where $\Psi:W^{u}(p)\times (L_0\cap L_1)^{m-1}\times[0,\infty)^{m-1}\times W^{s}(q)\to(L_0\cap L_1)^{2m}$ is given by:
  \begin{equation*}
    (x,p, t, y)\mapsto (x,p_1, \psi^{t_1}(p_1), p_2, \psi^{t_2}(p_2), \dots, \psi^{t_{m-1}}(p_{m-1}),y).
  \end{equation*}
  This describes $\widetilde{M}_{m}(p,q)$ as a smooth manifold with corners: the corners are given by the number of zeros in the tuple $(t_1,\dots, t_{m-1}$). Note that the dimension of a manifold with corners is defined as the dimension of the \emph{top stratum}, hence $\mathcal{M}_{[0]}(p,q,A)$ has no corners, and $\mathcal{M}_{[1]}(p,q,A)$ has at most one.
  
  Next we treat the compactness statement. The usual arguments of Floer-Gromov compactness apply, namely given a sequence of cascades with $m$ jumps in a fixed homotopy class (hence with uniform energy bounds) degenerates to one of the following 4 cases:
  \begin{enumerate}
  \item Some of the $u_k$ develop interior bubbles or boundary disc bubbles,
  \item some of the $u_{k}$ break into two strips,
  \item some gradient lines connecting consecutive strips shrinks to a point: this is a corner of the manifold,
  \item a gradient line breaks.
  \end{enumerate}
  We include pictures of the four cases:

\begin{center}
	\includegraphics[scale=0.5]{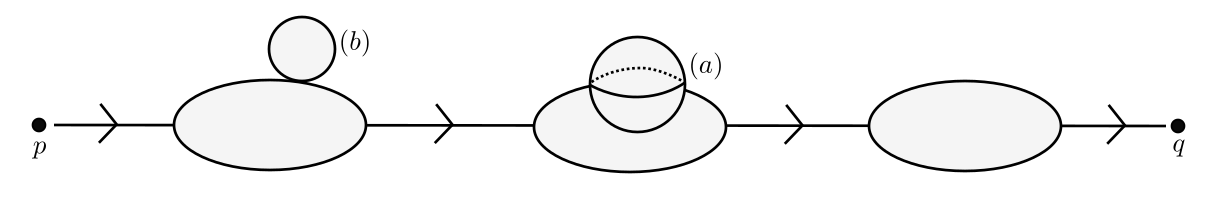}\\ \captionof{figure}{Case 1: A sequence of cascades may develop an interior bubble $(a)$ or a disc bubble $(b)$.}
\end{center}
\begin{center}
	\includegraphics[scale=0.5]{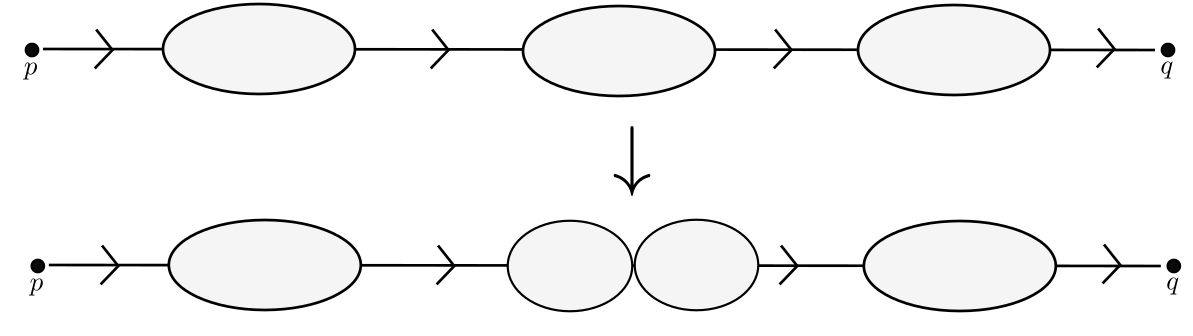}\\
	\captionof{figure}{Case 2: In a sequence of cascades, one or more $u_k$ may break.}
\end{center}
\begin{center}
	\includegraphics[scale=0.5]{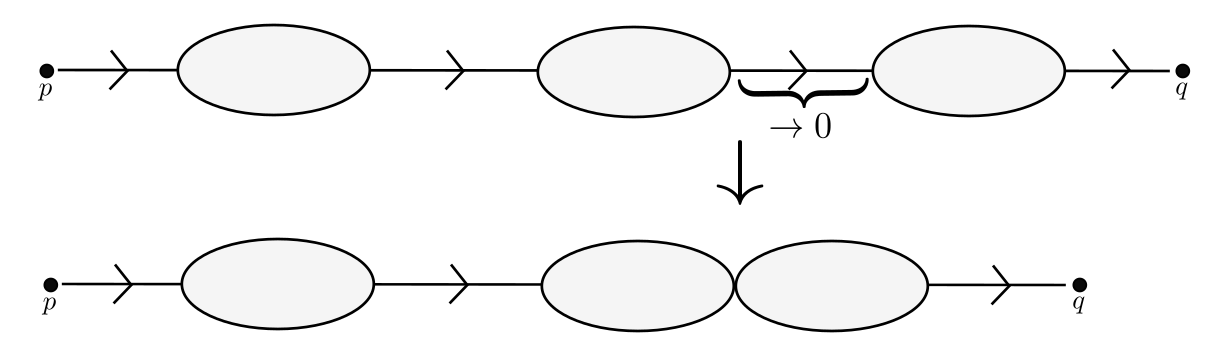}\\
	\captionof{figure}{Case 3: In a sequence of cascades, a gradient line may shrink to a point. }
\end{center}
\begin{center}
	\includegraphics[scale=0.5]{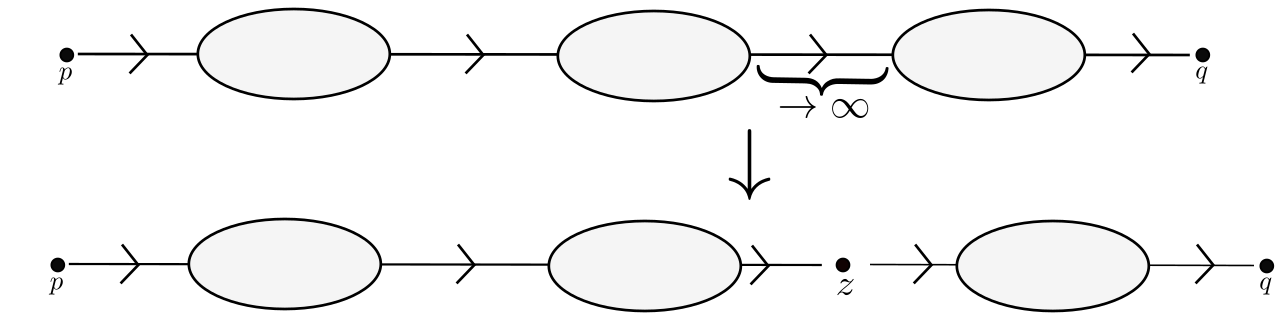}\\
	\captionof{figure}{Case 4: One of the gradient lines could break.}
\end{center}

  In our setting, case 1 can not occur since $\omega\cdot\pi_2(X,L_j)=0$.

  Looking at the zero-dimensional component: cases 2, 3 and 4 would be of strictly less dimension, so they do not happen for transversality reasons.

  In the one-dimensional component we can have at most one occurrence of each of the cases 2, 3 and 4. However: notice that cases 2 and 3 ``come in pairs:'' indeed if a disc breaks, then it can also be seen as a gradient line shrinking degeneration of the space of cascades with $m+1$ jumps. We can glue the compactifications of all the spaces $\mathcal{M}_{m}(p,q,A)$ along these identifications: they are no longer boundary points. Finally, the only degeneration that can appear in the boundary is case $4$, which proves the theorem.
\end{proof}
\begin{obs}
	We have implicitely used hypothesis \eqref{TH} by allowing ourselves to restrict our attention to distinct strips (i.e. no strip is a reparametrization of another), where we can achieve transversality. We could a priori have a sequence of cascades that converges to a cascade where two discs are reparametrization of one another. However if this was to happen then the following the starting point of the first one all the way to the endpoint of the second would produce a loop with boundaries in $L_0$ and $L_1$. By the \eqref{TH}, this loop has Maslov index greater or equal to 2. By removing it we get a cascade where all the discs are transversely cut-out since they are distinct: but the virtual dimension which coincides with the actual dimension by transversality has been reduced by $2$ and thus is $-1$, so empty.
\end{obs}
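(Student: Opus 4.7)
The observation rules out, in the dimensional range of Theorem \ref{TransversalityI}, that a sequence of distinct cascades can Gromov-converge to a limit cascade containing two strips $v_j,v_k$ (with $j<k$) that are reparametrizations of one another. My plan is to argue by contradiction: from such a degenerate limit I would construct a cylindrical loop $w$ to which hypothesis \eqref{TH} applies, and then show that excising it from the cascade yields a cascade with strictly negative virtual dimension.

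Suppose $v_k=v_j(\cdot+s_0)$, so $v_j(\pm\infty)=v_k(\pm\infty)=:p_\pm$. I would concatenate $v_j$ (going from $p_-$ to $p_+$) with the intermediate portion of the cascade lying between $v_j$ and $v_k$: the gradient flow out of $p_+=v_j(+\infty)$, the strips $v_{j+1},\dots,v_{k-1}$ together with their connecting flows, and the final flow into $v_k(-\infty)=p_-$. The gradient-flow segments lie in $L_0\cap L_1$ and are naturally encoded as constant-in-$t$ strips with boundary on both Lagrangians; the concatenation then defines a map $w\colon[0,1]\times S^1\to X$ with $w(s,0)\subset L_0$, $w(s,1)\subset L_1$, and $w(0,t)=w(1,t)=p_-$, exactly the setting of \eqref{TH}. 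Its symplectic area is $\omega(w)=E(v_j)+\sum_{i=j+1}^{k-1}E(v_i)>0$ since $v_j$ is a non-constant Floer strip, so \eqref{TH} forces
\[\operatorname{Mas}(w)=\operatorname{Mas}(v_j)+\sum_{i=j+1}^{k-1}\operatorname{Mas}(v_i)\geq 2.\]

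Next, I would excise the block $v_j,\dots,v_{k-1}$ from the original cascade and splice the gradient-flow data — possible because the flow out of $v_{j-1}(+\infty)$ already lands in $p_-=v_k(-\infty)$. The resulting cascade $(v_1,\dots,v_{j-1},v_k,\dots,v_m)$ connects $p$ to $q$ with total Maslov index reduced by at least $2$. Iterating this reduction on any remaining coincident pairs, one arrives after finitely many steps at a genuinely distinct cascade in a moduli space of expected dimension at most $d-2\leq -1$, where $d\in\{0,1\}$ is the dimension of the original component. The transversality part of Theorem \ref{TransversalityI}, which is proved for distinct cascades, then forces this moduli space to be empty, contradicting the existence of the limit cascade.

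The main obstacle is the loop construction: one has to check that the concatenation of $v_j$, the intermediate strips, and the constant-$t$ gradient-flow segments genuinely defines a map of the cylinder to which \eqref{TH} applies, and that the Maslov index decomposes additively over this concatenation as written. Once this topological bookkeeping is in place, the virtual-dimension count is routine.
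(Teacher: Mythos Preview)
Your argument is correct and follows the same line as the paper's remark: build a cylinder from the repeated strip together with the intermediate cascade segment, invoke \eqref{TH} to get Maslov index $\geq 2$, excise that block to drop the virtual dimension below zero, and conclude by transversality for distinct cascades. You are in fact more careful than the paper on two points---you make explicit which block ($v_j,\dots,v_{k-1}$) is removed so that the spliced flow line genuinely reconnects, and you note the need to iterate if further coincidences remain---but the underlying idea is identical.
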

We define the \emph{Floer chain complex} as:
$$FC_{\ast}(L_0,L_1):= \operatorname{Span}_{\Z_2}\{p\mid p\in \crit f\}\otimes \Lambda_0.$$
Since the set $\{p\mid p\in \crit(f)\}$ is finite, there is no need to complete the tensor product (it is already complete).

For a generic (in the sense of the previous theorem) path $J_t\in \mathcal{J}_{\operatorname{adm}}(X)$ we define the \emph{Floer differential} as the $\Lambda_0$-linear extension of the map:
\begin{eqnarray*}
  \partial: \operatorname{Span}_{\Z_2}\{p\mid p\in \crit f\}&\to & FC_{\ast}(L_0,L_1)\\
  p & \mapsto & \sum_{q} \#_2 \mathcal{M}_{[0]}(p,q,A) T^{\omega(A)} \cdot q.
\end{eqnarray*}
By Gromov compactness, this sum is actually an element in $FC_{\ast}(L_0,L_1)$ (i.e. the energies diverge). The compactification of the 1-dimensional component serves to show that $\partial^2=0$.

Define the \emph{Floer homology} of $L_0$ and $L_1$ over $\Lambda_0$ as the homology of this differential module. We will denote it by $FH_{\ast}(L_0,L_1;\Lambda_0)$. If in the definition we had used $\Lambda$ instead of $\Lambda_0$ we would get the \emph{Floer homology} of $L_0$ and $L_1$ over $\Lambda$:
$$FH_{\ast}(L_0,L_1;\Lambda)\cong FH_{\ast}(L_0,L_1;\Lambda_0)\otimes_{\Lambda_0} \Lambda.$$
This isomorphism comes from the exactness of localization from $\Lambda_0$ to $\Lambda$.

The Floer homology groups satisfy the following (by now standard) properties:
\begin{itemize}
\item Over $\Lambda_0$ (and hence over $\Lambda$) they are independent, up to natural isomorphism, of the choice of auxiliary data: the generic $J_t$, the Morse function and the Riemannian metric.
\item Over $\Lambda$ (warning: in general, not over $\Lambda_0$) they are independent of the Hamiltonian isotopy class of $L_1$: if $\varphi^{1}(L_1)\cap L_0$ is clean, then there is a natural isomorphism
  $$FH_{\ast}(L_0,L_1;\Lambda)\cong FH_{\ast}(L_0,\varphi^{1}(L_1);\Lambda).$$
\item $FH_{\ast}(L,L,\Lambda_0)\cong MH_{\ast}(L)\otimes_{\Z_2} \Lambda_0$, where $MH_{\ast}$ denotes the Morse homology with $\Z_2$-coefficients.
\end{itemize}
We also have the following:
\begin{prop}\label{Gap1}
  Given a regular $J_t$, there exists a constant $\hbar>0$ such that any non-constant solution $u$ to \eqref{FS} has energy (hence area) $E(u)>\hbar$. In particular, the Floer complex is gapped.
\end{prop}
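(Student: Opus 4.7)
The plan is to argue by contradiction: suppose there exists a sequence $(u_n)$ of non-constant Floer strips with $E(u_n)\to 0$. Combining the Gromov compactness results already set up for Theorem \ref{TransversalityI} with a local analysis near the clean intersection should force $u_n$ to be constant for $n$ large.

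First, I would invoke the compactness package developed in the proof of Theorem \ref{TransversalityI}. A subsequence of $(u_n)$ converges to a broken/cascade configuration of total energy $0$. The relative exactness hypothesis \eqref{RE} rules out sphere and disc bubbling, and strip breaking is excluded because any limit pieces would have non-negative energies summing to zero (and a non-constant strip has strictly positive $\omega$-area). Hence the limit is a single constant strip at some point $p\in L_0\cap L_1$. Together with the uniform exponential decay at $\pm\infty$ for bounded-energy solutions, this implies that for $n$ sufficiently large the image $u_n(\Theta)$ is contained in an arbitrarily small neighbourhood $U$ of $p$. Choosing $U$ to be a Weinstein-type chart adapted to the clean intersection, $L_0, L_1$ are modelled to first order by linear Lagrangian subspaces $V_0, V_1\subset \C^N$ with $V_0\cap V_1 = T_p(L_0\cap L_1)$.

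Second, I would apply a local monotonicity/isoperimetric inequality for $J$-holomorphic strips with cleanly intersecting Lagrangian boundary conditions: there exist constants $c, r_0>0$ such that any non-constant Floer strip with image in the ball $B_{r_0}(p)$ satisfies $E(u)\geq c\, r_0^2$. Applied to the $u_n$ this contradicts $E(u_n)\to 0$, producing the desired $\hbar$. Gappedness of the complex is then immediate: the zero-energy part $\hat{\partial}$ of the Floer differential collects the contributions with $\omega(A) = 0$, which by $E(u) = \omega(A)\geq 0$ are exactly the pure Morse-flow contributions, while $\partial - \hat{\partial}$ only picks up strips with $\omega(A)\geq \hbar$.

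The main obstacle will be establishing the local monotonicity estimate for cleanly intersecting Lagrangians. In the transverse case it is Sikorav's classical lemma. For the clean case, the standard reduction is to split $T_pX = T_p(L_0\cap L_1)\otimes_{\R}\C\;\oplus\;W$ with $W$ a symplectic complement on which $V_0, V_1$ meet transversely: on the tangential summand, a holomorphic strip with totally real boundary extends by Schwarz reflection to an entire bounded holomorphic map, hence is constant; on $W$ the classical monotonicity lemma applies. An implicit-function/perturbation argument then transfers the estimate from the linear model to the full non-linear Floer equation, or one can directly cite the corresponding estimate in \cite{Schmaschke}.
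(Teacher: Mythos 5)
Your overall strategy (contradiction plus Gromov compactness) matches the paper's first step, but you then diverge: the paper finishes purely topologically, while you finish analytically via a local model near the clean intersection. The paper's proof is very short: since the limit $u$ has zero energy it is constant, so for large $\nu$ the strip $u_\nu$ represents the trivial class in $\pi_2(X,L_0\cup L_1)$; because under \eqref{RE} the energy of a solution of \eqref{FS} equals the symplectic area of its homotopy class, this forces $E(u_\nu)=0$ exactly, contradicting $E(u_\nu)>0$. No exponential decay, no Weinstein chart, no isoperimetric inequality is needed. Your route is essentially the alternative the paper itself points to (Schm\"aschke's Proposition 4.9 via Po\'zniak neighbourhoods), and it has the advantage of working without relative exactness, at the cost of more analysis.

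One caveat on your key lemma: as stated, ``any non-constant Floer strip with image in $B_{r_0}(p)$ satisfies $E(u)\geq c\,r_0^2$'' is not the classical monotonicity lemma, which bounds the energy of a curve that passes through the centre \emph{and exits} the ball; it says nothing about curves entirely contained in a small ball, so your appeal to it on the transverse factor $W$ does not go through as written. The correct local statement (and the one Schm\"aschke proves) is stronger and simpler: in a Po\'zniak neighbourhood of $L_0\cap L_1$ there are \emph{no} non-constant finite-energy strips at all, proved by the same doubling/reflection argument you invoke for the tangential summand, applied to both factors. With that substitution your argument closes; the gappedness conclusion at the end is fine and agrees with the paper's subsequent remark identifying the zero-energy part of $\partial$ with the Morse differential.
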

\begin{proof}
  Assume this was not the case: we would then have a sequence $u_{\nu}$ of $J_t$-holomorphic strips such that $E(u_{\nu})>0$ and $\lim\limits_{\nu\to \infty}E(u_{\nu})=0$. By Gromov compactness, by passing to a subsequence we may assume $u_{\nu}$ converges to some $u$. But then $E(u)=0$, so $u$ must be constant so for some $\nu_0$ large enough, $u_{\nu}$ would represent the zero homotopy class for $\nu\geq \nu_0$: this would imply $E(u_{\nu})=0$ for all $\nu\geq \nu_0$, a contradiction to our assumption that they have positive energy.
\end{proof}
This result also appears in \cite[Proposition 4.9]{Schmaschke} using Poźniak neighbourhoods.

The zero-energy part of the differential is just the count of gradient flow lines, in particular:
$$H_{\ast}(\overline{FC}(L_0,L_1;\Lambda_0))\cong MH_{\ast}(L_0\cap L_1)$$
where $MH_{\ast}$ is the Morse homology and the bar denotes the zero-energy complex.

A similar argument shows the $\hbar$ of the previous proposition is ``lower semi-continuous,'' in the following sense:
\begin{prop}\label{Gap2}
  Given a fixed $J_t$ with $\hbar>0$ as in the previous lemma and given $\varepsilon<\hbar$, there exists an open neighbourhood $U$ of $J_t$ in the space of smooth paths in $\mathcal{J}_{\operatorname{adm}}(X)$ such that for all $J_t'\in U$, any nonconstant $u$ has energy greater than $\hbar-\varepsilon$.
\end{prop}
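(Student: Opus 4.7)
The plan is to mimic the strategy of Proposition \ref{Gap1} verbatim, the only additional subtlety being that the almost complex structure is also varying along with the strips. I would argue by contradiction: suppose the conclusion fails, so that there is a sequence $J^{\nu}_t \to J_t$ in $C^\infty$ on paths in $\mathcal{J}_{\operatorname{adm}}(X)$, together with nonconstant $J^{\nu}_t$-holomorphic Floer strips $u_\nu$ satisfying $0 < E(u_\nu) \leq \hbar-\varepsilon$.

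Since the $J^{\nu}_t$ converge smoothly and the energies are uniformly bounded, I would invoke Gromov--Floer compactness with varying almost complex structures to extract a subsequence converging, in the sense of broken trajectories, to a $J_t$-holomorphic configuration $(v_1,\dots,v_k)$ with $\sum_i E(v_i) = \lim_\nu E(u_\nu) \leq \hbar-\varepsilon$. Relative exactness $\omega\cdot\pi_2(X,L_j) = 0$ rules out disc bubbles on either Lagrangian and, through the factorization $\pi_2(X)\to \pi_2(X,L_j)$, sphere bubbles as well, so the limit consists only of honest Floer strips.

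The argument would then split into two cases. If some $v_i$ is nonconstant, Proposition \ref{Gap1} applied to the fixed regular $J_t$ gives $E(v_i) > \hbar$, contradicting $\sum_i E(v_i) \leq \hbar-\varepsilon$. Otherwise every $v_i$ is constant; the limit is then a single constant strip at a point $p\in L_0\cap L_1$ and $E(u_\nu)\to 0$. For $\nu$ large the image of $u_\nu$ lies in an arbitrarily small Darboux neighborhood $U$ of $p$, and since $\operatorname{crit}(f)$ is discrete we must have $u_\nu(\pm\infty) = p$. Capping the two boundary arcs of $u_\nu$ off by small discs inside $U\cap L_0$ and $U\cap L_1$ then produces a $2$-sphere whose $\omega$-area equals $E(u_\nu)$ (the caps contribute nothing because $\omega|_{L_j} = 0$) and vanishes by $\omega\cdot \pi_2(X) = 0$. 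Thus $E(u_\nu) = 0$, contradicting $u_\nu$ nonconstant.

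The hard part will be the last step: upgrading the zero-energy limit into the statement that the individual $u_\nu$ have vanishing area. The elementary capping argument above handles this in the relatively exact setting, but the robust way is to invoke the Poźniak-type local models used in \cite{Schmaschke,Biran-Cornea}, which show that a Floer strip confined to a small enough Darboux chart around a clean intersection sits in the trivial relative homotopy class, hence has zero $\omega$-area.
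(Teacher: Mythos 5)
Your argument is correct and is exactly what the paper intends: its proof of this proposition is the single line ``apply Gromov compactness in a similar fashion to the previous proposition,'' and your case split (a nonconstant limit component contradicts the $\hbar$-gap for the fixed regular $J_t$; an all-constant limit forces $u_\nu$ into the trivial relative class, hence zero area) fleshes out precisely that argument. One small slip: the asymptotics of a Floer strip lie in the clean intersection $L_0\cap L_1$, which is a positive-dimensional manifold, not in the discrete set $\crit(f)$, so the two ends of $u_\nu$ need not coincide and the sphere-capping shortcut does not literally apply --- but your closing remark invoking the Poźniak/Weinstein local model is the correct fix and is exactly the reference the paper gives (\cite[Proposition 4.9]{Schmaschke}).
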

\begin{proof}
  Apply Gromov compactness in a similar fashion to the previous proposition.
\end{proof}
\begin{obs}
  If bubbling was to happen (for example in the monotone case), then the same results hold by adjusting $\hbar$ to be the minimal area of all holomorphic strips and spheres.
\end{obs}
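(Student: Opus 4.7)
The plan is to rerun the proofs of Propositions \ref{Gap1} and \ref{Gap2}, now permitting sphere and disc bubbles in the Gromov-Floer limit, and to show that each such bubble carries at least a fixed positive amount of energy in the monotone case, so bubbles cannot actually form in a sequence whose total energy tends to zero. Concretely, redefine $\hbar := \min\{\hbar_{\mathrm{strip}},\hbar_{\mathrm{sph}},\hbar_{\mathrm{disc}}\}$, where $\hbar_{\mathrm{strip}}$ is the infimum of areas of nonconstant $J_t$-holomorphic strips with boundary on $(L_0,L_1)$, while $\hbar_{\mathrm{sph}}$ and $\hbar_{\mathrm{disc}}$ are the minimal areas of nonconstant $J$-holomorphic spheres in $X$ and of nonconstant $J$-holomorphic discs with boundary on $L_0$ or $L_1$. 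The analogue of Proposition \ref{Gap1} is then the assertion that $\hbar>0$.

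The monotone hypothesis immediately yields a uniform lower bound $\hbar_{\mathrm{bubble}} := \min(\hbar_{\mathrm{sph}},\hbar_{\mathrm{disc}})>0$, independent of the admissible $J$. Indeed, monotonicity provides a constant $\lambda>0$ such that the $\omega$-area is proportional to the Chern number on $\pi_2(X)$ and to the Maslov number on $\pi_2(X,L_j)$, both with the same constant $\lambda$ (up to the standard factor of $2$). Since the minimal Chern number and the minimal Maslov number are strictly positive integers (the latter being at least $2$ for a monotone Lagrangian), every nonconstant holomorphic sphere or disc has area bounded below by a fixed positive multiple of $\lambda$, depending only on the topological data of $(X,L_0,L_1)$.

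It remains to show $\hbar_{\mathrm{strip}}>0$. Suppose otherwise and take a sequence of nonconstant $J_t$-holomorphic strips $u_\nu$ with $E(u_\nu)\to 0$. By Gromov-Floer compactness for clean-intersection Lagrangian boundary conditions (Schm\"aschke's version, augmented by the standard interior and boundary bubbling analysis à la McDuff-Salamon), after extracting a subsequence $u_\nu$ converges to a stable broken configuration: a principal strip $u_\infty$ together with finitely many sphere or disc bubbles $v_i$ attached at interior or boundary nodes, with $E(u_\infty)+\sum_i E(v_i)=\lim_\nu E(u_\nu)=0$. Since each $E(v_i)\ge \hbar_{\mathrm{bubble}}>0$, no bubble can appear, and $u_\infty$ must be a constant strip. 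Hence $u_\nu\to u_\infty$ smoothly, so for $\nu$ large enough $u_\nu$ represents the trivial relative homotopy class; since the area of a $J_t$-holomorphic strip depends only on this class, $E(u_\nu)=0$ for such $\nu$, contradicting positivity. Proposition \ref{Gap2} extends by the same argument applied parametrically: if no $C^\infty$-neighbourhood of $J_t$ worked for a given $\varepsilon<\hbar$, a diagonal sequence $J_{t}^{\nu}\to J_t$ together with nonconstant $J_{t}^{\nu}$-holomorphic strips of area $\le \hbar-\varepsilon$ would, by parametric Gromov-Floer compactness over the compact family $\{J_{t}^{\nu}\}\cup\{J_t\}$, converge to a constant and yield the same contradiction. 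The only real obstacle is bookkeeping: verifying that Schm\"aschke's clean-intersection compactness theorem accommodates sphere and disc bubbles as one would expect, which is standard but is not spelled out in \cite{Schmaschke}.
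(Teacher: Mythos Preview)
The paper offers no proof for this remark; it is stated as a one-line observation. Your argument correctly supplies the details the paper leaves implicit: monotonicity forces a uniform positive lower bound on the area of any nonconstant sphere or disc bubble, so in a sequence of strips with energy tending to zero no bubble can form, and the original contradiction-by-Gromov-compactness goes through unchanged. You also go slightly further than the paper's wording by explicitly including disc bubbles alongside spheres, which is the right thing to do since boundary bubbling is the more relevant phenomenon for Lagrangian boundary conditions.
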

\subsubsection{Pseudoholomorphic quilts}
We review here the essential material concerning pseudoholomorphic quilts. This is by no means a complete exposition and we refer to the landmark article of Wehrheim and Woodward, \cite{Wehrheim-Woodward1}.

A \emph{surface with strip-like ends} consists of a compact Riemann surface with boundary, $\overline{S}$, together with three finite and disjoint subsets of points in the boundary:
$$\mathcal{E}=\mathcal{E}_{+}\sqcup \mathcal{E}_{-}\sqcup \mathcal{E}_{\operatorname{free}} \subset \partial \overline{S},$$
and a complex structure $j_{S}$ on $S:=\overline{S}\setminus \mathcal{E}$ such that for each point $z\in \mathcal{E}_{+}$ (resp. $\mathcal{E}_{-})$ there is an embedding $\epsilon_{z}: \R^{+}\times[0,\delta_{z}]\to S$ (resp. $\R^{-}\times [0,\delta_z]\to S$) such that:
\begin{itemize}
\item $\epsilon_{z}(\R^{\pm}\times\{0, \delta_{z}\})\subset \partial S$,
\item $\lim\limits_{s\to \pm \infty}\varepsilon_z(s,t)=z$,
\item $\varepsilon^{\ast}_z j_S=j_0$ where $j_0$ is the standard complex structure on the half-strip $\R^{\pm}\times [0,\delta_{z}]$.
\end{itemize}
Finally, the elements of $\mathcal{E}_{\operatorname{free}}$ are also equipped with strip-like ends, either positive or negative. Note that the distinction between a free end an incoming/outgoing end is merely that of labeling: in practice we won't care what happens at these ends.

We refer to elements of $\mathcal{E}_{+}$ as outgoing ends, those of $\mathcal{E}_{-}$ as incoming ends and those of $\mathcal{E}_{\operatorname{free}}$ as free ends.

A \emph{quilted surface with strip-like ends}, $\underline{S}$, consists of a surface with strip-like ends $S$, together with a collection of connected, pairwise disjoint subsets $\mathcal{S}$, called \emph{seams}, such that for every $s\in \mathcal{S}$ we have
\begin{itemize}
\item $s$ is a real-analytic subset of $S$,
\item on each strip-like end $\varepsilon_{z}:\R^{\pm}\times [0,\delta_z]\to S$ we have $\varepsilon_z^{-1}(s)$ is either empty or consists of one, or two disjoint lines of the form $\R^{\pm}\times\{t\}$ for $0<t<\delta_{z}$.
\end{itemize}
The closure of the connected components of $\operatorname{int}(S)\setminus\mathcal{S}$ are called \emph{patches} (written as $\mathcal{P}$), while connected components of $\partial S$ are called \emph{true boundaries}.

A \emph{decoration} of a quilted surface consists of the following data:
\begin{itemize}
\item for each patch $P\in \mathcal{P}$, a symplectic manifold $M_{P}$,
\item for each seam $s\in \mathcal{S}$ adjacent to two patches $P$ and $Q$, a \emph{Lagrangian correspondence} $L_{s}\subset M_{P}\times\overline{M_Q}$,
\item for each true boundary component $D$, a Lagrangian $L_D\subset M_P$ where $P$ is the unique patch adjacent to it.
\end{itemize}
See, e.g., Figure \ref{fig:dec-quilt-surf} for an illustration.

To look at pseudoholomorphic boundary value problems, we equip a decorated quilted surface $\underline{S}$ with the following data for each patch $P\in \mathcal{P}$:
\begin{itemize}
\item a smooth map $J_P:P\to \mathcal{J}_{\operatorname{adm}}(M_P)$ where $\mathcal{J}_{\operatorname{adm}}(M_P)$ is some class of admissible almost complex structures. We denote the tuple $(J_P)_{P\in \mathcal{P}}$ simply as $\underline{J}$, and by $\mathcal{J}(\underline{S})$ the set of all $\underline{J}$.
\item a smooth function-valued $1$-form: $$K_P\in \Omega^{1}(P, C^{\infty}(M_P))$$ such that $K_{P}\rvert_{\partial P}=0$, and such that, on each strip-like end $z$, $K_P$ is identified with $H_{z} dt$ for some smooth function $H_z$.
\end{itemize}
Denote the set of such tuples $(K_P)_{P\in \mathcal{P}}$ by $\underline{K}$ and by $\operatorname{Ham}(\underline{S}, (H_{z})_{z\in \mathcal{E}})$ the set of all tuples that on the strip-like ends agree with the given $H_z$. Note that to each $\underline{K}$ there is a tuple of Hamiltonian vector field-valued one-forms $X_{\underline{K}}= (X_{K_P}\in \Omega^1(P, \operatorname{Vect}(M_P)))_{P\in \mathcal{P}}$.

Given this data, we consider the moduli space of maps
$$\mathcal{Q}(\underline{S}):=\{\underline{u}=(u_P:P\to M_P)_{P\in \mathcal{P}}\mid \text{(1), (2), (3), (4)}\}$$
where:
\begin{enumerate}
\item $u_P$ is $(J_P,K_P)$-pseudoholomorphic: $$\overline{\partial}_{J_P,K_P}u_P= (du_P-X_{K_P})^{0,1}=0,$$
\item $u_{P}$ maps each true boundary components to the corresponding Lagrangian: $u_P(D)\subset L_D$, 
\item the maps $u_{P}$ are compatible at the seams: for a seam $s$ and two adjacent patches $P$ and $Q$ we have $(u_P,u_Q)\rvert_{s}\subset L_s$.
\item each $u_{P}$ has finite energy:
  $$E(\underline{u})=\sum_{P\in \mathcal{P}}\int_{P}\frac{1}{2}\left|du_P-X_{K_P}(u)\right|^2<\infty.$$
\end{enumerate}
Depending on the Lagrangians and if they are in transverse or clean intersection, the finite energy assumption implies any such map converges exponentially fast to $H_z$-perturbed intersection points on the strip like ends.

The standard process associates a Banach bundle over the Banach manifold of all tuples of maps of class $W^{1,p}$, for some $p>2$, whose fiber over $\underline{u}$ is the space of tuples $(L^{p}(u_P^{\ast}TM_P\otimes \Omega^{0,1}_{M_P}))_{P\in \mathcal{P}}$. The operator $\bigoplus\overline{\partial}_{J_P,K_P}u_P$ defines a section, and for a fixed $\underline{J}$, there exists a comeagre subset of Hamiltonian perturbations such that this section is transverse to the zero section and hence $\mathcal{Q}(\underline{S})$ will inherit the structure of a smooth manifold.

Finally, there is a notion of Gromov compactness for pseudoholomorphic quilts (see for example \cite{Lekili-Lipyanskiy}, discussion after lemma 6 for an explanation):
\begin{theorem}[Gromov Compactness for quilts]
  Given a sequence $\underline{u}_{\nu}$ of pseudoholomorphic quilts with uniformly bounded energy and converging perturbation data, there exists a subsequence that converges to a pseudoholomorphic quilt in the $\mathcal{C}^{\infty}_{\operatorname{loc}}$-topology, except at a finite number of points where the following can happen:
  \begin{itemize}
  \item At interior points a holomorphic bubble in the corresponding patch is formed.
  \item At true boundary points a disc with Lagrangian boundary condition is formed.
  \item At seam points, a disc with boundary in the seam condition is formed.
  \end{itemize}
\end{theorem}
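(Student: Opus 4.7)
The plan is to reduce the quilted Gromov compactness statement to the classical compactness theorems for pseudoholomorphic maps, using the standard folding trick of Wehrheim--Woodward around each seam. This lets us treat the three bubbling phenomena on a uniform footing.

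First I would establish a uniform gradient bound away from a finite set of exceptional points. The mean value inequality for $(J,K)$-holomorphic maps, applied patch by patch, shows that whenever the local energy density fails to be bounded, a definite quantum of energy (at least the minimal area $\hbar$ of a nonconstant sphere, disc, or seam disc, whichever is relevant) must accumulate there. The uniform energy bound then restricts the concentration to a finite set $Z\subset \underline{S}$. Away from $Z$, elliptic bootstrapping for $\overline{\partial}_{J_P,K_P}$, combined with the assumed convergence of perturbation data, yields $\mathcal{C}^{\infty}_{\operatorname{loc}}$-convergence of $\underline{u}_{\nu}$ on each patch; the seam and true boundary conditions are closed and hence pass to the limit, so the pieces assemble into a pseudoholomorphic quilt on $\underline{S}\setminus Z$. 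Removal of singularities (standard in the interior/boundary case, folded in the seam case) then extends the limit smoothly across each $z\in Z$.

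Next I would analyze each exceptional point $z\in Z$ by rescaling. If $z$ is interior to a patch $P$, rescaling at the point of maximal gradient produces a nonconstant $J_P(z)$-holomorphic sphere in $M_P$. If $z$ lies on a true boundary $D$ adjacent to $P$, half-plane rescaling produces a nonconstant disc in $M_P$ with boundary on $L_D$. The key case is a seam point: if $s$ separates patches $P$ and $Q$, on a small neighbourhood $U$ of $z$ meeting no other seam or boundary, the pair $(u_P,u_Q)$, composed with complex conjugation on the $Q$ side, assembles into a single map $\tilde{u}:U\to M_P\times \overline{M_Q}$ with boundary on the Lagrangian correspondence $L_s$. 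This $\tilde{u}$ satisfies an ordinary pseudoholomorphic boundary value problem for the product almost complex structure, to which the classical disc-bubble analysis applies. Rescaling then yields a nonconstant $(J_P(z)\times (-J_Q(z)))$-holomorphic disc with boundary on $L_s$, which is exactly the seam bubble claimed in the statement.

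The main obstacle is the seam case together with the bookkeeping needed to rule out energy loss. One must first choose the rescaling balls small enough to meet only a single patch, boundary, or seam, so the folding identification is unambiguous, and then verify a Hofer-type annulus energy estimate---again folded when the annulus straddles a seam---to conclude that the energy of the $\mathcal{C}^{\infty}_{\operatorname{loc}}$ limit together with the energies of all extracted bubbles equals the limit of $E(\underline{u}_{\nu})$. This last step is precisely the analysis carried out in \cite{Lekili-Lipyanskiy} (and implicitly in \cite{Wehrheim-Woodward1}), from which the theorem as stated follows.
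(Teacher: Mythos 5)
Your outline is correct and is essentially the argument the paper defers to: the paper does not prove this theorem itself but cites the discussion in \cite{Lekili-Lipyanskiy} (and \cite{Wehrheim-Woodward1}), and your sketch --- energy quantization via the mean value inequality, $\mathcal{C}^{\infty}_{\operatorname{loc}}$-convergence away from a finite concentration set, removal of singularities, and rescaling, with the seam case reduced to an ordinary Lagrangian boundary value problem in $M_P\times\overline{M_Q}$ by folding --- is a faithful summary of that standard proof. The only points worth flagging are that the folding step uses the real-analyticity of the seam to straighten it holomorphically, and that energy concentrating at a seam or boundary point can also escape into the interior and produce a sphere bubble, but both are routine and covered by the references you and the paper both invoke.
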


\subsection{Equivariant Morse homology}

In this section we recall the Morse homology version of the Borel construction.

\subsubsection{Classical equivariant homology}
Any compact Lie group (or more generally any topological group) $G$ admits a \emph{universal bundle}. This is a principal $G$-bundle $EG\to BG$ such that for any space $M$ there is a natural bijection between:
\begin{enumerate}
\item principal $G$-bundles over $M$, up to isomorphism,
\item smooth maps $f:M\to BG$, up to homotopy.
\end{enumerate}
The space $EG$ can be characterized up to $G$ homotopy equivalence as the only contractible space with a left free $G$-action.\footnote{Here we follow the convention that principal bundles have a left action.}

Given a space $M$ with a left $G$-action, the so-called \emph{Borel space} is:
$$M\times_G EG:= \left(M\times EG\right)\slash G_{\operatorname{diag}}.$$
The (Borel) \emph{equivariant (co)homology groups} of $M$ are defined as:
$$H^{G}_{\ast}(M):= H_{\ast}(M\times_G EG),\hspace{2mm} H_G^{\ast}(M):= H^{\ast}(M\times_G EG).$$

The projection map $M\times EG\to EG$ is $G$-equivariant (with respect to the diagonal action) and hence descends to a map $p_2:M\times_G EG\to BG$. This gives $M\times_G EG$ the structure of a locally trivial fiber bundle over $BG$ with structure group $G$ and fiber $M$:
$$\begin{tikzcd}
  M\arrow[hook,r] & M\times_G EG\arrow{d}{p_2}\\
  & BG.
\end{tikzcd}$$
Similarly, the projection onto the first factor descends to a map of the form $p_1:M\times_G EG\to M\slash G$. Crucially, if the action of $G$ on $M$ is free then this map is a locally trivial fibration with fiber $EG$:
$$\begin{tikzcd}
  EG\arrow[hook,r] & M\times_G EG\arrow{d}{p_1}\\
  & M\slash G.
\end{tikzcd}$$
Since $EG$ is contractible, this map induces an isomorphism on (co)homology and thus:
\begin{theorem}[Cartan isomorphism]\label{Cartan}
  If the action of $G$ on $M$ is free, then we have isomorphisms
  $$p_{1_{\ast}}: H^G_\ast(M)\xrightarrow{\sim}H_{\ast}(M\slash G), \hspace{2mm} p_1^{\ast}:H^{\ast}(M\slash G)\xrightarrow{\sim}H^{\ast}_{G}(M).$$
\end{theorem}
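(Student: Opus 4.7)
The key observation, already flagged in the paragraph preceding the theorem statement, is that when $G$ acts freely on $M$ the projection $M \to M/G$ is itself a principal $G$-bundle, so the associated bundle construction gives $M \times_G EG$ the structure of a locally trivial fibration over $M/G$ whose fiber is $EG$. The plan is to exploit this fibration structure together with the contractibility of $EG$.

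First, I would verify in detail that $p_1 \colon M \times_G EG \to M/G$ is a genuine fiber bundle. Since the $G$-action on $M$ is free and proper (as $G$ is compact), the quotient $M/G$ is a smooth manifold, and for any $[x] \in M/G$ one can choose a local slice $U \subset M$ so that $U \to U/G$ is trivialized as $G \times (U/G)$. Over this chart, $p_1^{-1}(U/G) \cong (G \times EG)/G \times (U/G) \cong EG \times (U/G)$, where the quotient uses the diagonal action and the free $G$-action on $EG$ identifies $(G \times EG)/G$ with $EG$. This produces the local trivialization.

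Second, I would invoke the standard fact that a locally trivial fibration with contractible fiber over a paracompact base is a weak homotopy equivalence; equivalently, one may run the Serre spectral sequence
\begin{equation*}
E_2^{p,q} = H_p(M/G;\, H_q(EG)) \Rightarrow H_{p+q}(M \times_G EG),
\end{equation*}
and observe that since $EG$ is contractible we have $H_q(EG) = 0$ for $q > 0$ and $H_0(EG) = \Z_2$, so the spectral sequence collapses on page two to an edge isomorphism $p_{1\ast} \colon H_\ast(M \times_G EG) \to H_\ast(M/G)$. The same argument with cohomological Serre gives the dual statement $p_1^\ast$. By definition $H^G_\ast(M) = H_\ast(M \times_G EG)$, so this is exactly the claimed isomorphism.

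There is essentially no hard step here, but the one subtlety worth pausing over is the well-definedness of the local trivialization: one has to check that the identifications $(G \times EG)/G_{\mathrm{diag}} \cong EG$ obtained over overlapping slices glue consistently, which follows from the $G$-equivariance of the $EG$-factor. I would also mention that, in the applications that follow, one typically takes a finite-dimensional approximation $E_n G \to B_n G$ of the universal bundle so that $M \times_G E_n G$ is a finite-dimensional manifold and the Serre spectral sequence argument can be replaced by a direct Morse-theoretic fibration argument — this is the avatar of Cartan's isomorphism that will be genuinely used in the sequel.
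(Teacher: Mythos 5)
Your argument is correct and is essentially the same as the paper's: the paper also observes that when the action is free, $p_1\colon M\times_G EG\to M/G$ is a locally trivial fibration with contractible fiber $EG$, and concludes the isomorphism on (co)homology from that. Your added details (the local slice trivialization and the Serre spectral sequence collapse) just fill in the standard steps the paper leaves implicit.
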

However, the space $EG$ is usually an infinite-dimensional CW-complex, hence ill-suited for Morse-theoretical methods.

\subsubsection{Morse pushforwards}
\label{PushforwardSection}
The approach of this section is based on \cite{Kronheimer-Mrowka}.

Let $M,N$ be smooth manifolds, and $\varphi:M\to N$ a smooth map. Equip $M$ and $N$ with Morse-Smale pairs $(f,g)$ and $(h,g')$. We define a \emph{grafted Morse line} as a tuple $(\gamma^{-},\gamma^{+})$ where $\gamma^{-}:(-\infty,0]\to M$ and $\gamma^{+}:[0,\infty)\to N$ are smooth maps such that:
$$\begin{cases}
  \dot{\gamma}^{-}=-\nabla_g f(\gamma^{-}), & \dot{\gamma}^{+}=-\nabla_{g'} h (\gamma^{+}),\\
  \varphi(\gamma^{-}(0))=\gamma^{+}(0).
\end{cases}$$
Note that $\gamma^{-}$ converges to a critical point of $f$ as $t\to-\infty$ and $\gamma^{+}$ converges to a critical point of $h$ as $t\to +\infty$. Denote by $\mathcal{MG}_{\varphi}(x,y)$ the set of grafted Morse lines such that: $$\lim\limits_{t\to -\infty}\gamma^{-}(t)=x\text{ and }\lim\limits_{t\to \infty}\gamma^{+}(t)=y.$$ Elements of this set are pictured as in Figure \ref{fig:grafted-morse}.
\begin{figure}[h]
  \centering
  \begin{tikzpicture}
    \draw[blue,postaction={decorate,decoration={
        markings,
        mark=between positions 0.55 and 1 step 0.5 with {\arrow{>[scale=1.3,line width=.8pt]};},
      }}] (0,0)node[black,below]{$x$}node[fill=black,inner sep=1pt]{}--(5,0);
    \draw[red,postaction={decorate,decoration={
        markings,
        mark=between positions 0.55 and 1 step 0.5 with {\arrow{>[scale=1.3,line width=.8pt]};},
      }}] (5,0)--(10,0)node[black,below]{$y$}node[fill=black,inner sep=1pt]{};
    \draw (5,0.25)node[above]{$\varphi$}--(5,-0.25);
    \path (2.5,0.2)node[above]{$-\nabla_{g}f$}--(7.5,0.2)node[above]{$-\nabla_{g'}h$};
  \end{tikzpicture}
  \caption{A grafted Morse line in $\mathcal{MG}_{\varphi}(x,y)$.}
  \label{fig:grafted-morse}
\end{figure}
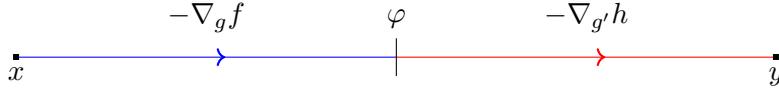

There is a bijection:
$$\mathcal{MG}_{\varphi}(x,y)\cong W^{u}(x)\times W^{s}(y)\cap \Gamma(\varphi)\subset M\times N$$
where $\Gamma(\varphi)$ is the graph of $\varphi$.
\begin{prop}
  If $\varphi(\crit f)\cap \crit(h)=\emptyset$, then there exists an open dense subset of $\operatorname{Met}(M)\times \operatorname{Met}(N)$ (the spaces of Riemannian metrics) such that for any metric in this subset the intersection above is transverse, and $\mathcal{MG}_{\varphi}(x,y)$ is a smooth manifold of dimension $\ind_f(x)-\ind_{h}(y)$.

  Furthermore, when this difference is one, this moduli space admits a compactification whose boundary is the disjoint union of:
  $$\mathcal{M}(x,z)\times \mathcal{MG}_{\varphi}(z,y)\text{ and }\mathcal{MG}_{\varphi}(x,w)\times \mathcal{M}(w,y),$$
  where the disjoint union is over $z\in \crit f$ with $\ind_{f}(x)-\ind_{f}(z)=1$, and over $w\in \crit h$ with $\ind_{h}(w)-\ind_{h}(y)=1$. Here we recall $\mathcal{M}(p,q)$ is the space of unparametrized gradient flow lines from $p$ to $q$.
\end{prop}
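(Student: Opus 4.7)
My plan is to leverage the bijection already stated, $\mathcal{MG}_{\varphi}(x,y)\cong (W^{u}(x)\times W^{s}(y))\cap\Gamma(\varphi)\subset M\times N$, and derive both parts of the statement from it. The dimension count is then immediate by transverse intersection:
$$\dim W^{u}(x)+\dim W^{s}(y)+\dim\Gamma(\varphi)-\dim(M\times N)=\ind_{f}(x)+(\dim N-\ind_{h}(y))+\dim M-\dim M-\dim N=\ind_{f}(x)-\ind_{h}(y).$$

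For transversality, I would run a parametric Sard-Smale argument in the product $\operatorname{Met}(M)\times\operatorname{Met}(N)$ of Riemannian metrics (restricted to a $C^{k}$-completion to obtain a Banach setting, then passed to smooth metrics by a standard Taubes trick). Form the universal moduli space
$$\widetilde{\mathcal{MG}}_{\varphi}(x,y):=\{(p,q,g,g'):p\in W^{u}_{g}(x),\ q\in W^{s}_{g'}(y),\ \varphi(p)=q\}$$
and check that at every solution the linearization of the defining equations is surjective, so that $\widetilde{\mathcal{MG}}_{\varphi}(x,y)$ is a smooth Banach manifold. This reduces to the claim that, given any $(p,q)$ in the intersection and any vector $(a,b)\in T_{p}M\oplus T_{q}N$, one may find local perturbations of $g,g'$ whose infinitesimal effect on $W^{u}_{g}(x)$ and $W^{s}_{g'}(y)$ realizes the projection of $(a,b)$ transverse to $T_{p}W^{u}(x)\oplus T_{q}W^{s}(y)$. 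The hypothesis $\varphi(\crit f)\cap\crit h=\emptyset$ enters exactly here: if $p=x$, then $q=\varphi(x)$ is a regular point of $-\nabla_{g'}h$, so $W^{s}(y)$ can be moved freely near $q$ by a bump perturbation of $g'$; the symmetric statement handles $q\in\crit h$; and at interior regular flow points of both sides, both $W^{u}(x)$ and $W^{s}(y)$ can be locally steered, as in the proof of Morse-Smale transversality. Then projecting $\widetilde{\mathcal{MG}}_{\varphi}(x,y)\to\operatorname{Met}(M)\times\operatorname{Met}(N)$ and applying Sard-Smale produces a comeagre (hence, intersected with the open Morse-Smale locus, open dense) set of regular pairs.

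For the compactification when $\ind_{f}(x)-\ind_{h}(y)=1$, a sequence in $\mathcal{MG}_{\varphi}(x,y)$ corresponds to a sequence of intersection points $(p_{n},q_{n})=(\gamma_{n}^{-}(0),\gamma_{n}^{+}(0))$ in the $1$-manifold $(W^{u}(x)\times W^{s}(y))\cap\Gamma(\varphi)$. Standard Morse-theoretic compactness applies on each side separately: the $M$-side flow line may break at a single $z\in\crit f$, producing a pair in $\mathcal{M}(x,z)\times\mathcal{MG}_{\varphi}(z,y)$, and the $N$-side flow line may break at a single $w\in\crit h$, producing a pair in $\mathcal{MG}_{\varphi}(x,w)\times\mathcal{M}(w,y)$; indices in the intermediate critical points must differ by $1$ from the respective endpoints for dimensional reasons, and no further degenerations occur because all other strata of the compactification have codimension $\geq 2$. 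Gluing of broken configurations is standard Morse gluing performed separately on the two sides of the grafted line, with the condition $\varphi(p)=q$ imposed at the joint; this equips the compactified moduli space with the structure of a $1$-manifold with the advertised boundary.

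The main obstacle is the surjectivity step in the transversality argument: one must combine the local ``steering'' of stable/unstable manifolds by metric perturbations with the fact that the graph $\Gamma(\varphi)$ itself is not perturbed, so all transversality has to be achieved via $(g,g')$ alone. The critical-points hypothesis is precisely what guarantees that at least one of the two metric perturbations is effective at every intersection point, and it is this synergy, rather than either perturbation in isolation, that makes the universal moduli space smooth.
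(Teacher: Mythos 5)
Your dimension count, the reduction to transversality of $(W^{u}(x)\times W^{s}(y))\cap\Gamma(\varphi)$, and the compactness/gluing discussion are standard and essentially fine (the paper gives no proof of this proposition, deferring to the Kronheimer--Mrowka approach, so there is nothing to compare against except the standard argument). The genuine gap sits exactly at the step you yourself flag as the main obstacle: the two cases you invoke are \emph{not} symmetric. When $p=x\in\crit f$, the $M$-side equation ``$p\in W^{u}_{g}(x)$'' is handled by varying the point $p$ itself, and the $N$-side equation by steering $W^{s}(y)$ with $\delta g'$ near the regular point $\varphi(x)$; this works. But when $q=\varphi(p)=y\in\crit h$, the germ of $W^{s}_{g'}(y)$ is pinned at $y$ for every $g'$ (so $\delta g'$ contributes nothing to the linearized $N$-side equation at that solution), and the only remaining input into the $N$-side is $d\varphi_{p}(\delta p)$: steering $W^{u}(x)$ near $p$ with $\delta g$ only affects the $M$-side constraint. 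Surjectivity of the universal linearization at such a point therefore \emph{requires} $d\varphi_{p}(T_{p}M)+T_{y}W^{s}(y)=T_{y}N$, a condition on $\varphi$ that no metric perturbation can create (beyond rotating $T_{y}W^{s}(y)$ among maximal positive subspaces of $\operatorname{Hess}_{y}h$, which is useless if $\operatorname{rank}d\varphi_{p}<\ind_{h}(y)$).

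The statement for arbitrary smooth $\varphi$ is in fact false, so the gap cannot be closed without strengthening the hypothesis: take $M=N=S^{2}$, $\varphi(z)=z^{2}$, let $h$ have an index-$1$ critical point at $y=0$, and let $f$ have a unique maximum $x$, so that the branch point $0\notin\crit f$ lies in the open set $W^{u}(x)$ for every metric; one can arrange $\varphi(\crit f)\cap\crit h=\emptyset$, yet $(0,0)$ is a non-transverse intersection point for every pair of metrics and $\mathcal{MG}_{\varphi}(x,y)$ acquires a nodal singularity there. What saves the proposition in this paper is that the maps actually used satisfy $d\varphi_{p}(T_{p}M)+T_{w}W^{s}(w)=T_{w}N$ for all $w\in\crit h$ and $p\in\varphi^{-1}(w)$: this is automatic for the submersions $\pi_{1}^{n}$, and arrangeable for the embeddings $\Id\times_{G}\iota_{n}$ by choosing $f_{n+1}$ so that its critical points avoid the image of $\varphi$ (or meet it suitably transversally). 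Your proof should isolate and assume such a condition. A minor secondary point: Sard--Smale yields a comeagre set, and intersecting a comeagre set with an open dense set gives only a comeagre set, not an open dense one; openness of the regular locus requires the separate (standard) compactness argument for the finitely many moduli spaces of dimension at most one.
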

Given $f,h$ such that $\varphi(\crit f)\cap \crit (h)=\emptyset$ and a regular pair $(g,g')$ as in the proposition, one can form the Morse complexes of $f$ and $h$. The pushforward $\varphi_{\ast}: MC_{\ast}(M,f)\to MC_{\ast}(N,h)$ is defined as the linear extension of the map:
\begin{eqnarray*}
  \varphi_{\ast}:MC(M,f)&\to & MC(N,h)\\
  p & \mapsto & \sum_{\stackrel{q}{\ind_{f}(p)=\ind_{h}(q)}} \#_2 \mathcal{MG}_{\varphi}(p,q)\cdot q.
\end{eqnarray*}
The compactification of the 1-dimensional component of the manifolds $\mathcal{MG}_{\varphi}$ serve to show this is a chain map, and so there is an induced map $$\varphi_{\ast}:MH_{\ast}(M)\to MH_{\ast}(N)$$ which is independent of the auxiliary choices. Furthermore, as explained in \cite{Kronheimer-Mrowka}, after identifying Morse homology to singular homology, this map agrees with the singular pushforward.

\subsubsection{Equivariant Morse homology}\label{MorseData}

As mentioned, the space $EG$ being infinite dimensional makes it difficult to apply Morse-theoretical methods. However, we can always find a \emph{finite-dimensional approximation} of $EG$, in the following sense:
\begin{prop}
  If $G$ is a compact Lie group, then there exists a sequence $\{EG_n\}_{n\in \mathbb{N}}$ of closed smooth manifolds with maps $\iota_n:EG_n\to EG_{n+1}$ such that:
  \begin{itemize}
  \item each $EG_n$ is equipped with a left free $G$-action,
  \item each $\iota_n:EG_n\hookrightarrow EG_{n+1}$ is a $G$-equivariant embedding,
  \item the space $EG_{n+1}$ is obtained from $EG_{n}$ by attaching higher dimensional cells and $\lim\limits_{\to}EG_n\cong EG$. In particular:
    $$\lim\limits_{\to}H_{\ast}(EG_n)=\lim\limits_{\leftarrow} H^{\ast}(EG_n)\cong \begin{cases}
      \Z_2 & \ast=0\\
      0 & \text{otherwise}.
    \end{cases}$$
  \end{itemize}
\end{prop}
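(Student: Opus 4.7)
The plan is to construct the $EG_n$ as complex Stiefel manifolds attached to a faithful representation. By the Peter-Weyl theorem a compact Lie group admits a faithful unitary representation, so first I would fix a smooth closed embedding $G \hookrightarrow U(N)$ for some $N \geq 1$ and set $EG_n := V_N(\C^{N+n})$, the complex Stiefel manifold of orthonormal $N$-frames in $\C^{N+n}$. This is a closed smooth manifold, diffeomorphic to the homogeneous space $U(N+n)/U(n)$. Viewing a frame as an $(N+n) \times N$ matrix with orthonormal columns, $U(N)$ acts freely on the right by post-multiplication; restricting along $G \hookrightarrow U(N)$ and converting to a left action via $g \cdot v := v g^{-1}$ equips each $EG_n$ with a smooth free left $G$-action. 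The maps $\iota_n$ are then induced by the standard inclusions $\C^{N+n} \hookrightarrow \C^{N+n+1}$ on the first $N+n$ coordinates, and are tautologically $G$-equivariant closed embeddings.

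Next I would verify that the colimit $V_N(\C^{\infty}) := \lim\limits_{\to} EG_n$ models $EG$. The classical computation that $V_N(\C^{N+n})$ is $2n$-connected (induction on $N$ via the fibre sequence $V_{N-1}(\C^{N+n-1}) \to V_N(\C^{N+n}) \to S^{2(N+n)-1}$) implies the colimit is weakly contractible. Equipping each $V_N(\C^{N+n})$ with a compatible Schubert-type CW decomposition makes each $\iota_n$ the inclusion of a subcomplex, and the relative connectivity of the pair forces the cells of $EG_{n+1} \setminus EG_n$ to have minimal dimension tending to infinity with $n$. The colimit is therefore a contractible CW complex carrying a free $G$-action, hence a model of $EG$.

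The homology assertion $\lim\limits_{\to} H_\ast(EG_n) \cong \Z_2$ concentrated in degree zero then follows because singular homology commutes with filtered colimits and $EG$ is contractible; the cohomological statement follows from the Milnor $\lim^1$ exact sequence, in which the $\lim^1$-term vanishes because each $H^\ast(EG_n)$ is finitely generated in every degree.

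The main subtlety here is the CW-compatibility of the inclusions $\iota_n$ required to make the phrase ``attaching higher dimensional cells'' literally true; this is handled by choosing a Schubert cell structure on the Stiefel filtration. Alternatively, one may sidestep the issue entirely by invoking the abstract recognition principle that any ascending union of free $G$-spaces whose colimit is weakly contractible is a model of $EG$, in which case only the contractibility of $V_N(\C^{\infty})$ — an immediate consequence of the growing connectivity — is needed.
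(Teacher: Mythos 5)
Your proposal is correct and follows essentially the same route as the paper: embed $G$ into a unitary group via Peter--Weyl and take the Stiefel manifolds $V_N(\C^{N+n})$ as the finite-dimensional approximations. The paper's proof is only a sketch of this reduction, whereas you supply the supporting details (freeness of the restricted action, growing connectivity via the fibre sequence, and the Schubert cell structure making the inclusions cellular), all of which check out.
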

\begin{proof}
  Note that if $G\subset H$, then a model of $EH$ is a model of $EG$, since $EH$ has a free $G$-action and is contractible (however, the classifying spaces may of course be different).

  Every compact Lie group $G$ embeds into some $U(k)$ for $k$ sufficiently large (this is a consequence of the Peter-Weyl theorem, see \cite{Knapp} theorem 4.20 for example), so we can reduce to the case where $G=U(k)$, but an finite dimensional approximation exists for this group: these are the Stiefel manifolds.
\end{proof}
If $M$ is a closed manifold with a left $G$-action, we can form the sequence of closed manifolds $\{M\times_G EG_n\}_{n\in \mathbb{N}}$ where the $EG_n$ are as above (note that they are indeed smooth manifolds since the diagonal action is free, because it is free on $EG_n$). Furthermore, the map $\Id\times \iota_n: M\times EG_n\to M\times EG_{n+1}$ is $G$-equivariant and thus descends to the quotient, we denote the resulting maps as $\Id\times_G \iota_n:M\times_G EG_n\to M\times_G EG_{n+1}$.

We equip such a sequence of spaces and maps with \emph{universal Morse data} as follows: Since each $EG_{n+1}$ is obtained from $EG_{n}$ by attaching higher dimensional cells, there exists a sequence of Morse functions $f_n: M\times_G EG_n\to \R$ such that $\Id\times_G \iota_n (\crit f_n)\cap \crit f_{n+1}=\emptyset$. Henceforth we fix such a family of functions.

Given two consecutive manifolds of the sequence, there is a subset:
$$R(n,n+1)\subset \operatorname{Met}(M\times_G EG_n)\times \operatorname{Met}(M\times_G EG_{n+1})$$
consisting of pairs of metrics that are Morse-Smale and regular for the pushforward (i.e.\@ the involved moduli spaces are transversely cut-out). This subset is open and dense.

The space $R=\prod_{n\in \mathbb{N}} \operatorname{Met}(M\times_G EG_n)$ is a completely metrizable space (being a countable product of completely metrizable spaces): it is thus a Baire space. Set
$$V_n:= R(n,n+1)\times \prod_{j\neq n,n+1}\operatorname{Met}(M\times_G EG_j)\subset R.$$
They are open and dense subsets of $R$, and so $\bigcap_{n\in \mathbb{N}}V_n$ is a set of the second Baire category, thus dense in $R$.

This means that there are generic sequences of metrics such that all the moduli spaces involved in each differential and each pushforward are all smooth manifolds of the expected dimension \emph{at the same time}.

Define the \emph{equivariant Morse complex} as:
$$MC_{\ast}^{G}(M):= \overline{\operatorname{Tel}}(MC_{\ast}(M\times_G EG_n), \Id\times_G\iota_{n_\ast}).$$

Its homology, called \emph{equivariant Morse homology} and denoted $MH_{\ast}^{G}(M)$, satisfies:
$$MH_{\ast}^{G}(M)\cong \lim\limits_{\to}MH_{\ast}(M\times_G EG_n)\cong H_{\ast}^{G}(M)$$
where the first isomorphism comes from the natural isomorphism between the homology of a filtered homotopy colimit and the homology of the usual limit.

\section{Equivariant Lagrangian Floer homology}
In this section we construct equivariant Lagrangian Floer homology following \cite{Cazassus}. Our main contribution here is adapting the construction to the clean-intersection case, along with taking care of Novikov coefficients.
\subsection{Set-up}
We assume $G$ is a compact Lie group acting Hamiltonianly on $(X,\omega)$, with moment map $\mu_X:X\to \mathfrak{g}^{\ast}$, and that $L_0,L_1\subset\mu^{-1}(0)\subset X$ are $G$-invariant Lagrangians in clean intersection. We assume that $L_0$ and $L_1$ are \emph{relatively exact}, see (\ref{RE}), and that $X$ is either compact or $G$-convex at infinity:
\begin{equation}\label{GC}\tag{GC}
  \begin{split}	\text{There exists a convex pair on }X, (J_C,\rho)\textit{ such that }J_C \text{ and }\rho \\
    \text{are }G\text{-invariant and }d\rho(J_{C}X_{\xi})=\langle\mu_X,\xi\rangle
    \text{ for all }\xi\in \mathfrak{g}.
  \end{split}
\end{equation}

We note that the condition that both $L_0$ and $L_1$ are contained in $\mu_X^{-1}(0)$ is not too restrictive: indeed, two connected and $G$-invariant Lagrangians are contained in some level set of the moment map, which up to a shift in the moment map (if they both lie on the same level set) we can assume is the zero level.

\subsection{Symplectic Borel spaces}
Fix a finite-dimensional approximation of $EG$, say $\{EG_n\}_{n\in \mathbb{N}}$ with maps $\iota_n: EG_n\hookrightarrow EG_{n+1}$. By example~\ref{ExCot}, the cotangent bundle $T^{\ast}EG_n$ with its canonical symplectic form $\omega_{EG_n}$ is endowed with a Hamiltonian $G$-action with moment map $\mu_{EG_n}$. By Remark~\ref{RemarkDiag}, the diagonal action on the symplectic manifold $(X\times T^{\ast}EG_n, \omega+\omega_{EG_n})$ is Hamiltonian with moment map $\mu_{n}= \mu_{X}+\mu_{EG_n}$. Since the action of $G$ on $EG_n$ is free, this action is also free, and we can thus form the Marsden-Weinstein reduction:
$$X_{n}:= (X\times T^{\ast}EG_n)\slash\slash G.$$
By the Marsden-Weinstein theorem, this is a symplectic manifold, and we denote its induced symplectic form simply by $\omega_n$. In it sit two distinguished Lagrangians, namely $L_j^{n}:= L_j\times_G 0_{EG_n}\subset X_n$ for $j=0,1$.
\begin{prop}
  If $L_0,L_1$ are relatively exact, then so are $L_0^{n}$ and $L_1^{n}$.
\end{prop}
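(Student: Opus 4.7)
The plan is to pull any disc in $(X_n, L_j^n)$ back to a disc in the ambient product $(X \times T^*EG_n, L_j \times 0_{EG_n})$ and then split the area integral into two pieces, each of which vanishes for a separate reason.

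More precisely, let $u:(D^2,\partial D^2)\to(X_n,L_j^n)$ be a smooth disc. First I would lift $u$ to $\mu_n^{-1}(0)$. Recall that $\pi:\mu_n^{-1}(0)\to X_n$ is a principal $G$-bundle, and since $D^2$ is contractible the pullback bundle $u^{\ast}\mu_n^{-1}(0)\to D^2$ is trivial and therefore admits a section. Composing this section with the total-space projection yields a smooth lift $\tilde{u}:(D^2,\partial D^2)\to(\mu_n^{-1}(0),\tilde{L}_j^n)$, where $\tilde{L}_j^n:=L_j\times 0_{EG_n}$; the boundary does land in $\tilde{L}_j^n$ because $L_j\subset\mu_X^{-1}(0)$ and $0_{EG_n}\subset\mu_{EG_n}^{-1}(0)$ (the moment map vanishes on the zero section of $T^{\ast}EG_n$), so $L_j\times 0_{EG_n}\subset\mu_n^{-1}(0)$ and this product is exactly the preimage of $L_j^n$.

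Next I would invoke the Marsden--Weinstein defining identity $i^{\ast}(\omega+\omega_{EG_n})=\pi^{\ast}\omega_n$ to get
$$\int_{D^2}u^{\ast}\omega_n=\int_{D^2}\tilde{u}^{\ast}i^{\ast}(\omega+\omega_{EG_n})=\int_{D^2}\tilde{u}_X^{\ast}\omega+\int_{D^2}\tilde{u}_{EG}^{\ast}\omega_{EG_n},$$
where $\tilde{u}=(\tilde{u}_X,\tilde{u}_{EG})$ are the projections to the two factors. The first summand vanishes because $\tilde{u}_X:(D^2,\partial D^2)\to(X,L_j)$ and $L_j$ is relatively exact by hypothesis~\eqref{RE}. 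For the second summand, $\omega_{EG_n}=d\lambda_{EG_n}$ is exact (being the canonical symplectic form on a cotangent bundle) and the Liouville form $\lambda_{EG_n}$ vanishes on the zero section $0_{EG_n}$, so Stokes gives $\int_{D^2}\tilde{u}_{EG}^{\ast}\omega_{EG_n}=\int_{\partial D^2}\tilde{u}_{EG}^{\ast}\lambda_{EG_n}=0$. Combining the two, $\omega_n\cdot\pi_2(X_n,L_j^n)=0$.

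The only delicate point is the lifting step; everything else is a one-line computation. Since $D^2$ is contractible there is no real obstruction, but one should note that the same argument applies verbatim to spheres in $X_n$ representing classes in $\pi_2(X_n)$ (lifting to $\mu_n^{-1}(0)$ via a section over $S^2$, which exists provided $G$ is connected, or more generally because one can work with a connection on the principal bundle and integrate on the horizontal lift), yielding in passing a symplectic asphericity statement that is not needed here. Thus the proof reduces to the content of the Marsden--Weinstein theorem together with the exactness of $T^{\ast}EG_n$ relative to its zero section.
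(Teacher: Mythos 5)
Your proof is correct and follows essentially the same route as the paper: lift the disc to $L_j\times 0_{EG_n}$ using contractibility of the domain, identify the areas via the Marsden--Weinstein identity $i^{\ast}(\omega+\omega_{EG_n})=\pi^{\ast}\omega_n$, and kill the two summands by relative exactness of $L_j$ and exactness of the Liouville form on $T^{\ast}EG_n$; you merely spell out the lifting and boundary-condition details that the paper leaves implicit. The closing aside about spheres is not needed and is slightly off as stated (a principal $G$-bundle over $S^2$ need not admit a section even for connected $G$, being classified by $\pi_1(G)$), but this does not affect the proposition, which only concerns relative classes represented by discs.
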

\begin{proof}
  Indeed, if $u:(D,\partial D)\to (X_n, L_j^{n})$ is a disc, then, by virtue of it being contractible, one can lift it to a disc $\tilde{u}:(D,\partial D)\to (X\times T^{\ast}EG_n, L_j\times 0_{EG_n})$ with the same symplectic area. Since $L_j$ is relatively exact and so is $0_{EG_n}$ we have:
  $$\int_{D}u^{\ast}\omega_{n_{\operatorname{red}}}=\int_{D} \tilde{u}^{\ast}(\omega+\omega_{EG_n})=0,$$
  and the proof is complete.
\end{proof}
We now exhibit a fibration analogous to $p_2:M\times_G EG\to BG$ in the classical case. This corresponds to \cite[Proposition 4.7]{Cazassus}, and we repeat the construction for completeness.

Equip $EG_n$ with a $G$-invariant metric. The its tangent space splits as:
$$T_{q}EG_n\cong T_{q}\mathcal{O}_{q}\oplus T_{q}\mathcal{O}_q^{\bot}$$
where $\mathcal{O}_{q}\cong G$ is the $G$-orbit of $q$. In particular, there is a canonical identification $\mathfrak{g}\cong T_q \mathcal{O}_{q}$ by $\xi \mapsto X_{\xi}(q)$. Taking duals yields a $G$-invariant splitting:
$$T_{q}^{\ast}EG_{n}\cong (T_q \mathcal{O}_{q})^{\ast}\oplus (T_{q}\mathcal{O}_q)^{\bot^{\ast}}\cong \mathfrak{g}^{\ast}\oplus \operatorname{Ann}_{q}(\mathfrak{g})$$
where $\operatorname{Ann}$ denotes the annihilator. Under this identification, $\mu_{EG_{n}}^{-1}(0)$ is identified with the vector sub-bundle:
$$\mu_{EG_{n}}^{-1}(0)= \bigcup_{q\in EG_n} \operatorname{Ann}_{q}(\mathfrak{g}).$$
Note that the orthogonal projection $T^{\ast}EG_n\to \mu_{EG_n}^{-1}(0)$ is $G$-equivariant since the splitting is $G$-invariant. We precompose this map with the second coordinate projection $X\times T^{\ast}EG_{n}\to T^{\ast}EG_n$ to get a $G$-invariant map:
$$\mu_{n}^{-1}(0)\to T^{\ast}EG_n\to \mu_{EG_n}^{-1}(0)$$
which descends to the quotients to yield:
$$\pi_2: X_n\to T^{\ast}BG_n \cong T^{\ast}EG_n\slash\slash G .$$
We can parametrize the fiber over a point $(q,p)$ as follows: pick a lift $(\tilde{q},\tilde{p})\in \mu_{EG_n}^{-1}(0)$ (there are $G$-many choices of such lifts), note that $\tilde{p}$ is an element of $\operatorname{Ann}_{\tilde{q}}(\mathfrak{g})$. Now set:
\begin{eqnarray*}
  \beta: X & \hookrightarrow & \pi_{2}^{-1}(q,p)\subset X_n\\
  x & \mapsto & [x, (\tilde{q},\tilde{p}-\mu(x))]
\end{eqnarray*}
where $\mu(x)$ is identified with an element of $\mathfrak{g}^{\ast}$ by virtue of the aforementioned splitting. Now we compute:
\begin{align*}  \beta^{\ast}(\omega+\omega_{EG_n})_{\operatorname{red}}(v,w)&=\omega(v,w)+\omega_{EG_n}((0,-d\mu(v)),(0,-d\mu(w)))\\                                                              &= \omega(v,w)
\end{align*}
since the cotangent fiber in $T^{\ast}EG_n$ is Lagrangian. Using this it is not hard to verify the following:
\begin{prop}
  The map $\pi_2:X_n \to T^{\ast}BG_n$ is a locally trivial fibration with structure group $G$ (which in particular acts by symplectomorphisms) and fibers symplectomorphic to $X$. Furthermore this fibration restricts to a fibration on the Lagrangians $L_j\hookrightarrow L_j \times_G 0_{EG_n}\to 0_{BG_n}$ (this is just the standard projection $p_2$ we defined in the previous sections).
\end{prop}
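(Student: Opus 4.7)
The plan is to build local trivializations by combining the principal $G$-bundle structure of $EG_n\to BG_n$ (so local sections exist) with the map $\beta$ from the excerpt, which already encodes the fiberwise symplectic geometry. Fix $(q_0,p_0)\in T^{\ast}BG_n$ and choose a neighborhood $U\subset BG_n$ of $q_0$ admitting a smooth section $\sigma:U\to EG_n$. Using the $G$-invariant splitting $T^{\ast}_{\sigma(q)}EG_n\cong \mathfrak{g}^{\ast}\oplus \operatorname{Ann}_{\sigma(q)}(\mathfrak{g})$ recalled above, I would lift $\sigma$ to a smooth section
\[
\tilde{\sigma}: T^{\ast}BG_n|_U\to \mu_{EG_n}^{-1}(0),\qquad (q,p)\mapsto \bigl(\sigma(q),\tilde{p}\bigr),
\]
of the principal $G$-bundle $\mu_{EG_n}^{-1}(0)\to T^{\ast}BG_n$ (free since the $G$-action on $EG_n$ is free), where $\tilde{p}\in \operatorname{Ann}_{\sigma(q)}(\mathfrak{g})$ is the element corresponding to $p\in T^{\ast}_q BG_n$ under the isomorphism induced by $d\sigma$ and the splitting.

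The candidate trivialization is then
\begin{eqnarray*}
\Phi_{\sigma}: X\times T^{\ast}BG_n|_U &\to& \pi_2^{-1}\bigl(T^{\ast}BG_n|_U\bigr)\\
\bigl(x,(q,p)\bigr)&\mapsto& \bigl[x,\bigl(\sigma(q),\,\tilde{p}-\mu_X(x)\bigr)\bigr],
\end{eqnarray*}
so that for fixed $(q,p)$ it coincides with the map $\beta$. Well-definedness follows from $\mu_n=\mu_X+\mu_{EG_n}$ (Remark~\ref{RemarkDiag}) together with the fact that, under the splitting, $\mu_{EG_n}$ acts as the projection onto the $\mathfrak{g}^{\ast}$-summand, which forces $(x,(\sigma(q),\tilde{p}-\mu_X(x)))\in \mu_n^{-1}(0)$. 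Smoothness is inherited from $\tilde{\sigma}$ and $\mu_X$, and by construction $\pi_2\circ\Phi_{\sigma}$ is the projection to $T^{\ast}BG_n|_U$. Injectivity of $\beta$ uses freeness of the $G$-action on $EG_n$; surjectivity is by translating any class so that its $EG_n$-component equals $\sigma(q)$; and fiberwise symplecticity is exactly the computation $\beta^{\ast}(\omega+\omega_{EG_n})_{\operatorname{red}}=\omega$ already carried out. Thus $\Phi_{\sigma}$ is a local symplectic trivialization with fiber symplectomorphic to $X$.

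Finally, for the structure group: given two sections $\sigma,\sigma'$ over overlapping charts, write $\sigma'(q)=g(q)\cdot\sigma(q)$ for a unique smooth $g:U\cap U'\to G$. A short $G$-equivariance computation yields
\[
\Phi_{\sigma'}^{-1}\circ\Phi_{\sigma}\bigl(x,(q,p)\bigr)=\bigl(g(q)^{-1}\cdot x,\,(q,p)\bigr),
\]
so the transition functions take values in $G$ acting on $X$ by the given Hamiltonian action, hence by symplectomorphisms. For the Lagrangian restriction, since $\mu_X|_{L_j}=0$ we have $\Phi_{\sigma}\bigl(L_j\times 0_{BG_n}|_U\bigr)=\{[x,(\sigma(q),0)]:x\in L_j,\,q\in U\}=(L_j\times_G 0_{EG_n})|_U$, so the fibration restricts to one over $0_{BG_n}$ with fiber $L_j$. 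The only mildly delicate step I anticipate is ensuring smoothness of $\tilde{\sigma}$, which rests on smoothness of the $G$-invariant splitting $T^{\ast}EG_n\cong \mathfrak{g}^{\ast}\oplus \operatorname{Ann}(\mathfrak{g})$; everything else is bookkeeping.
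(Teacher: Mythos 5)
Your proposal is correct and follows the same route as the paper, which after constructing the fiber parametrization $\beta$ and computing $\beta^{\ast}(\omega+\omega_{EG_n})_{\operatorname{red}}=\omega$ simply asserts that the verification is routine; you supply exactly the expected details (local sections of $EG_n\to BG_n$ lifted to $\mu_{EG_n}^{-1}(0)$, transition functions valued in $G$, restriction to the zero sections). The only quibble is the direction convention in the cocycle (whether the transition is $x\mapsto g(q)\cdot x$ or $x\mapsto g(q)^{-1}\cdot x$ depends on whether $\sigma'=g\cdot\sigma$ or $\sigma=g\cdot\sigma'$), which does not affect the conclusion.
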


Now define the tangent distribution:
$$H_{[x,(\tilde{q},\tilde{p})]}:= (T_{[x,(\tilde{q},\tilde{p})]}\pi_{2}^{-1}(q,p))^{\omega}\subset T_{[x,(\tilde{q},\tilde{p})]}$$
that is, $H$ is the \emph{symplectic orthogonal to the fiber} (here we use the notation $\_ ^{\omega}$ even though it is the orthogonal with respect to $(\omega+\omega_{EG_{n}})_{\mathrm{red}}$).
\begin{prop}
  The distribution $H$ is a \emph{symplectic connection} in the following sense. The projection: $$\pi_{2_{\ast}}\rvert_{H_{[x,(\tilde{q},\tilde{p})]}}: H_{[x,(\tilde{q},\tilde{p})]}\to T_{(q,p)}T^{\ast}BG_n$$
  is a linear symplectomorphism.
\end{prop}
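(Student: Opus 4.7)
The plan is to first verify that $\pi_{2\ast}|_H$ is a linear isomorphism, and then, using Marsden-Weinstein in two stages, reduce the symplectic statement to a cancellation produced by the moment map identity.

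The isomorphism is essentially formal given the preceding computation. Indeed, $\beta^{\ast}(\omega+\omega_{EG_n})_{\operatorname{red}} = \omega$ shows each fiber $\pi_2^{-1}(q,p)$ is a symplectic submanifold of $X_n$, so the vertical subspace $V := \ker d\pi_2$ is a symplectic subspace of $T_{[x,(\tilde q,\tilde p)]} X_n$. Hence $V \cap V^{\omega} = V \cap H = 0$ and $\pi_{2\ast}|_H$ is injective. A dimension count, $\dim X_n = \dim X + 2\dim BG_n = \dim V + \dim T^{\ast}BG_n$, then makes $\pi_{2\ast}|_H$ a linear bijection onto $T_{(q,p)}T^{\ast}BG_n$.

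For the symplectic property, I would recast $\pi_2$ as a reduction of a $G$-equivariant map. Using the $G$-invariant splitting $T^{\ast}_q EG_n \cong \mathfrak{g}^{\ast}\oplus \operatorname{Ann}_q(\mathfrak{g})$, let $\tau_q:\mathfrak{g}^{\ast}\hookrightarrow T^{\ast}_q EG_n$ be the inclusion and define
$$\Sigma : X \times T^{\ast}EG_n \longrightarrow T^{\ast}EG_n, \qquad (x,q,p)\longmapsto (q,\, p+\tau_q(\mu_X(x))).$$
A direct check shows $\mu_{EG_n}\circ \Sigma = \mu_n$, so $\Sigma$ carries $\mu_n^{-1}(0)$ into $\mu_{EG_n}^{-1}(0)$, and being $G$-equivariant, descends to a map $X_n \to T^{\ast}BG_n$ which one verifies is $\pi_2$. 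Writing $\pi_X:\mu_n^{-1}(0)\to X_n$ and $\iota:\mu_n^{-1}(0)\hookrightarrow X\times T^{\ast}EG_n$, the Marsden-Weinstein theorem applied to both reductions gives
$$\pi_X^{\ast}\pi_2^{\ast}\omega_{T^{\ast}BG_n} \;=\; \iota^{\ast}\Sigma^{\ast}\omega_{EG_n},$$
so the desired identity $\omega_n|_H = \pi_2^{\ast}\omega_{T^{\ast}BG_n}|_H$ reduces, after pulling back by the principal $G$-bundle $\pi_X$, to showing that $\iota^{\ast}\bigl(\omega_X+\omega_{EG_n}-\Sigma^{\ast}\omega_{EG_n}\bigr)$ vanishes on pairs of lifts of horizontal vectors.

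The main obstacle is this last computation, and here the moment map identity is what does the work. I would first describe horizontal lifts explicitly: using the lift $(a_0,0,-d\mu_X(a_0))\in T\mu_n^{-1}(0)$ of a vertical vector $d\beta(a_0)$, the vanishing $\omega_n(h,v)=0$ for all $v\in V$ combined with $d\langle\mu_X,\xi\rangle=\omega_X(X_\xi,\cdot)$ translates into the requirement that any lift $(a,u,w)\in T_{(x,\tilde q,\tilde p)}\mu_n^{-1}(0)$ of $h\in H$ must satisfy $a=X_{\pi^{\parallel}(u)}(x)$, where $\pi^{\parallel}:T_{\tilde q}EG_n\to T_{\tilde q}\mathcal O_{\tilde q}\cong\mathfrak g$ is the projection onto the orbit direction. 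With horizontal lifts in this explicit form, expanding $\Sigma^{\ast}\omega_{EG_n}=-d(\Sigma^{\ast}\alpha_{EG_n})$ produces a ``connection correction'' term whose contribution, by the moment map identity, precisely cancels the $\omega_X$ contribution coming from the fundamental vector fields $X_{\pi^{\parallel}(u_i)}$; the residue is $\omega_{EG_n}$ evaluated on the $\operatorname{Ann}(\mathfrak g)$-components of the lifts, which by the second Marsden-Weinstein reduction is exactly the pullback $\pi_2^{\ast}\omega_{T^{\ast}BG_n}$.
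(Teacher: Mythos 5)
Your reduction of the statement is genuinely different from the paper's argument: the paper normalizes representatives of horizontal vectors inside the quotient (showing every $h\in H$ is uniquely of the form $[0,(\alpha^{\bot},\beta)]$) and then asserts the identity of forms, whereas you factor $\pi_2$ through the $G$-equivariant shift $\Sigma$ and compare pullbacks. Your first paragraph (the fiber is symplectic, so $V\cap H=0$, plus a dimension count) is fine, and the identities $\mu_{EG_n}\circ\Sigma=\mu_n$, $\pi_X^{\ast}\pi_2^{\ast}\omega_{T^{\ast}BG_n}=\iota^{\ast}\Sigma^{\ast}\omega_{EG_n}$, and the description $a=X_{\pi^{\parallel}(u)}$ of horizontal lifts are all correct. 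This framing is cleaner than the paper's.

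The gap is in your last sentence. Write $\theta$ for the $\mathfrak{g}$-valued $1$-form $\pi^{\parallel}$ on $EG_n$ (a principal connection) and compute $\Sigma^{\ast}\lambda_{EG_n}=\lambda_{EG_n}+\langle\mu_X,\theta\rangle$, so the $2$-form you must show vanishes on pairs of horizontal lifts is $\omega_X\pm d\langle\mu_X,\theta\rangle$. Normalizing the lifts by the diagonal $\mathfrak{g}$-action so that $a_i=0$ and $\theta(u_i)=0$, the term $\omega_X(a_1,a_2)$ and the $\langle d\mu_X\wedge\theta\rangle$ part --- i.e.\ everything the moment map identity can reach --- vanish identically, and what survives is $\pm\langle\mu_X(x),d\theta(u_1,u_2)\rangle=\pm\langle\mu_X(x),\Omega(u_1,u_2)\rangle$, the curvature of $EG_n\to BG_n$ paired with $\mu_X(x)$. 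This is exactly the magnetic term of cotangent-bundle reduction at the nonzero level $-\mu_X(x)$, and it is nonzero in general (test case: trivial $G$-action on $X$ with $\mu_X\equiv c\neq 0$, where $X_n\cong X\times\bigl(\mu_{EG_n}^{-1}(-c)/G\bigr)$ and $\omega_n\rvert_H$ is the magnetically twisted form on $T^{\ast}BG_n$). So the residue is not just ``$\omega_{EG_n}$ on the $\operatorname{Ann}(\mathfrak{g})$-components'': the asserted identity holds only at points where $\langle\mu_X(x),\Omega_{\tilde{q}}\rangle=0$, in particular over $\mu_X^{-1}(0)$, which is where the Lagrangians live. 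To be fair, the paper's own proof ends with ``the proposition follows easily'' and elides exactly the same term; but as written your final cancellation is asserted rather than proved, and it fails at the stated level of generality.
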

\begin{proof}
  Taking $(\tilde{q},\tilde{p})$ a lift of $(q,p)$ in $\mu_{EG_n}^{-1}(0)$, the tangent space splits as:
  $$T_{(\tilde{q},\tilde{p})}\mu_{EG_n}^{-1}(0)\cong T_{\tilde{q}}\mathcal{O}_{\tilde{q}}\oplus (T_{\tilde{q}}\mathcal{O}_{\tilde{q}})^{\bot}\oplus T_{\tilde{p}}\operatorname{Ann}_{\tilde{q}}(\mathfrak{g})\cong \mathfrak{g}\oplus (\mathfrak{g})^{\bot}\oplus \operatorname{Ann}(\mathfrak{g}).$$
  Meanwhile the tangent space $T_{(\tilde{q},\tilde{p})}T^{\ast}EG_n$ splits as:
  $$T_{(\tilde{q},\tilde{p})}T^{\ast}EG_n\cong \mathfrak{g}\oplus (\mathfrak{g})^{\bot}\oplus\mathfrak{g}^{\ast}\oplus \operatorname{Ann}(\mathfrak{g}).$$
  With these decompositions, the tangent map of the projection $T^{\ast}EG_n\to \mu_{EG_n}^{-1}(0)$ is just the projection in the appropriate components. Meanwhile a tangent vector to the fiber is represented uniquely in its equivalence class as
  $$\tilde{w}=[w, (0,-d\mu_X(w))]$$
  where $v\in T_{x}X$. Hence, if an element $\overline{v} \in H_{[x,(\tilde{q},\tilde{p})]}$ written as:
  $$\overline{v}=[v, (\alpha,\beta-d\mu_{X}(v))],$$
  it must be that $v=0$ (since it is orthogonal to the fiber), so it is uniquely represented as an element:
  $$\overline{v}=[0, (\alpha,\beta)]$$
  where $\beta\in \operatorname{Ann}(\mathfrak{g})$. Now decompose $\alpha$ as $\alpha=(\alpha_{\mathfrak{g}},\alpha^{\bot})\in \mathfrak{g}\oplus (\mathfrak{g})^{\bot}$ and observe that $\alpha_{\mathfrak{g}}$ has to be zero: indeed if it was not, $\overline{v}$ would be equivalent to
  $$\overline{v}\sim [-X_{\alpha_\mathfrak{g}}, (\alpha^{\bot}, \beta)]$$
  which is not orthogonal to the fiber. Hence $\overline{v}$ can be uniquely represented as $[0,(\alpha^{\bot}, \beta)]$ where $(\alpha^{\bot},\beta)\in (\mathfrak{g})^{\bot}\oplus\operatorname{Ann}(\mathfrak{g})$. Using this description, the proposition follows easily.
\end{proof}
Any compatible almost complex structure $J_{BG_n}$ on $T^{\ast}BG_n$ lifts to a compatible almost complex structure $\pi_{2}^{\ast}J_{BG_n}$ on the symplectic vector bundle $H\subset TX_n$. The vertical bundle $V\subset TX_n$ is symplectic (since the inclusion of the fiber is a symplectomorphism) and so it admits compatible almost complex structure $J_{V}$. We say that an almost complex structure on $X_n$ is \emph{split} if it is of the form:
$$J_{V}\oplus \pi_{2}^{\ast}J_{BG_n}\in \operatorname{End}(V\oplus H)=\operatorname{End}(TX_n).$$
Since the space of compatible almost complex structures on a symplectic vector bundle is non-empty and contractible, the space of split almost complex structures is also non-empty and contractible.

There is a class of almost complex structures that are naturally split: if $J_{X}$ is $G$-invariant, then we can unambiguously define the almost complex structure $J_{X}\oplus \pi_{2}^{\ast}J_{BG_n}$ on $X_n$. In particular, if $J_C$ is a $G$-convex structure, then $J_{C}\oplus \pi_{2}^{\ast}J_{BG_n}$ is well defined. Then:
\begin{prop}
  If $X$ is $G$-convex at infinity, then $X_n$ is convex at infinity.
\end{prop}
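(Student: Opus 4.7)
The plan is to build a convex-at-infinity pair $(\rho_n, J_n^C)$ on $X_n$ by piecing together the $G$-convex data on $X$ with the canonical convex structure on $T^*EG_n$ and then descending to the symplectic quotient. For the function, I would fix a $G$-invariant Riemannian metric on $EG_n$ (available since $G$ is compact) and consider the $G$-invariant map $\tilde\rho\colon X\times T^*EG_n\to[0,\infty)$ defined by $\tilde\rho(x,\tilde q,\tilde p):=\rho(x)+\tfrac12|\tilde p|^2$. Restricting to $\mu_n^{-1}(0)$ and descending to the quotient yields a well-defined $\rho_n\colon X_n\to[0,\infty)$. Properness of $\rho_n$ follows from properness of $\rho$ on $X$ and compactness of $EG_n$: a $\rho_n$-bounded sequence admits a lift with both $\rho(x_\nu)$ and $|\tilde p_\nu|$ bounded, so the lift stays in a compact subset of $\mu_n^{-1}(0)$.

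For the almost complex structure, I would use the symplectic splitting $TX_n = V\oplus H$ established in the preceding proposition and set $J_n^C := J_C\oplus \pi_2^* J_{BG_n}$, where $J_{BG_n}$ is a compatible almost complex structure on $T^*BG_n$ that is convex at infinity with respect to $(q,p)\mapsto\tfrac12|p|^2$ (e.g.\ the standard one induced by any Riemannian metric on the compact manifold $BG_n$). The previous proposition guarantees this is well defined globally (via $G$-invariance of $J_C$), $\omega_n$-tame, and that $\pi_{2*}J_n^C = J_{BG_n}\pi_{2*}$ with $\ker\pi_{2*} = V$.

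To verify $J_n^C$-plurisubharmonicity outside a compact set, I would split $\rho_n = \rho_n^{\text{fib}} + \pi_2^*\rho_{BG_n}$ with $\rho_n^{\text{fib}}([x,\tilde q,\tilde p]):=\rho(x)$ and $\rho_{BG_n}(q,p):=\tfrac12|p|^2$, and estimate the two Levi forms separately. For $\pi_2^*\rho_{BG_n}$, the relation $\pi_{2*}J_n^C = J_{BG_n}\pi_{2*}$ together with $J_n^C$-invariance of $V$ gives $d(\pi_2^*\rho_{BG_n})\circ J_n^C = \pi_2^*(d\rho_{BG_n}\circ J_{BG_n})$, so its Levi form is the pullback of that of $\rho_{BG_n}$ and is non-negative outside $\pi_2^{-1}(K)$ for some compact $K\subset T^*BG_n$. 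For $\rho_n^{\text{fib}}$, the purely vertical contribution reproduces the Levi form of $\rho$ on the fiber (which is symplectomorphic to $X$ with $J_n^C|_V$ corresponding to $J_C$), and is non-negative outside $\rho^{-1}(K')$ for compact $K'\subset X$.

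The main obstacle is the horizontal and mixed contribution to the Levi form of $\rho_n^{\text{fib}}$. Here the compatibility hypothesis $d\rho(J_CX_\xi)=\langle\mu_X,\xi\rangle$ from \eqref{GC} is essential: using the local description of horizontal vectors from the previous proposition (representatives of the form $[0,(\alpha^\bot,\beta)]$ with $\alpha^\bot\in\mathfrak{g}^\bot$, $\beta\in\operatorname{Ann}(\mathfrak{g})$), one computes that the would-be contributions of $J_CX_\xi$-derivatives of $\rho$ are exactly matched by the moment-map pairings along the fiber directions, which are forced to vanish after restricting to $\mu_n^{-1}(0)$ and passing to the quotient. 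Once this cancellation is verified, the three pieces combine to give $-d(d\rho_n\circ J_n^C)(v,J_n^C v)\geq 0$ outside a compact subset of $X_n$, proving convexity at infinity.
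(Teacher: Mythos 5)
Your overall strategy is the right one and matches the paper's in its ingredients: take the product pair $\bigl(\rho\oplus\tfrac12\|\tilde p\|^2,\ J_C\oplus J_{EG_n}\bigr)$ on $X\times T^{\ast}EG_n$, restrict to $\mu_n^{-1}(0)$, and descend. The paper, however, does \emph{not} compute the Levi form downstairs via the $V\oplus H$ splitting; it checks the single identity $d\rho(J_CX_{\xi})+d\rho_{EG_n}(J_{EG_n}X_{\xi})=\langle\mu_X+\mu_{EG_n},\xi\rangle=0$ on $\mu_n^{-1}(0)$ (this is exactly where \eqref{GC} enters) and then invokes the general descent statement, Lemma \ref{LemmaConvex}, which transports nonnegativity of the Levi form through the quotient by comparing $d\rho\circ J$ with $\pi^{\ast}(d\overline{\rho}\circ\overline{J})$ on $T\mu_n^{-1}(0)$. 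Your properness argument is fine.

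There is a genuine gap in your Levi-form bookkeeping. The decomposition $\rho_n=\rho_n^{\mathrm{fib}}+\pi_2^{\ast}\rho_{BG_n}$ with $\rho_n^{\mathrm{fib}}([x,\tilde q,\tilde p])=\rho(x)$ is false: on $\mu_n^{-1}(0)$ the constraint $\mu_{EG_n}(\tilde q,\tilde p)=-\mu_X(x)$ says that, under the splitting $T_{\tilde q}^{\ast}EG_n\cong\mathfrak{g}^{\ast}\oplus\operatorname{Ann}_{\tilde q}(\mathfrak{g})$, the $\mathfrak{g}^{\ast}$-component of $\tilde p$ equals $-\mu_X(x)$, whereas $\pi_2$ only retains the $\operatorname{Ann}$-component. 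Hence $\tfrac12\|\tilde p\|^2=\tfrac12\|\mu_X(x)\|^2+\pi_2^{\ast}\rho_{BG_n}$, and the true fiber part of $\rho_n$ is $\rho+\tfrac12\|\mu_X\|^2$ — this is visible in the paper's fiber parametrization $\beta(x)=[x,(\tilde q,\tilde p-\mu(x))]$. So the "purely vertical contribution" is not the Levi form of $\rho$, and $\|\mu_X\|^2$ has no reason to be $J_C$-plurisubharmonic on its own; the three pieces as you have split them do not close up. Moreover, the step that would have to absorb this defect — the claimed exact cancellation of the mixed/horizontal terms via \eqref{GC} — is asserted rather than computed, and it is precisely the content of the proof. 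I would restructure the argument around Lemma \ref{LemmaConvex}: upstairs the Levi form of the product pair splits with no mixed terms, and the one identity above is exactly the lemma's hypothesis.
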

\begin{proof}
  Equip $X\times T^{\ast}EG_n$ with the almost complex structure $J_{C}\oplus J_{EG_n}$ where $J_{EG_n}$ is induced by some Riemannian metric. $T^{\ast}EG_n$ has a $G-$convex function $\rho_{EG_n}$ given by $(q,p)\mapsto \norm{p}^2$. Now $\rho\oplus\rho_{EG_n}:X\times T^{\ast}EG_n\to \R_{\geq 0}$ is a $G$-invariant convex pair. Furthermore, restricted to $\left(\mu_{X}+\mu_{EG_n}\right)^{-1}(0)$ it satisfies $d\rho(J_{C}X_{\xi})+d\rho_{EG_n}(J_{EG_n}X_{\xi})=0$. The claim follows from the following lemma. Furthermore, the resulting almost complex structure $\tilde{J_C}$ is split. We denote by $\tilde{\rho}$ the induced plurisubharmonic function.
\end{proof}
As a corollary:
\begin{lema}\label{LemmaConvex}
  Suppose $Y$ is a Hamiltonian $G$-manifold with a $G$-invariant convex structure $(\rho,J_C)$ such that $d\rho\rvert_{\mu_{Y}^{-1}(0)}(J_{C}X_{\xi})=0$ for all $\xi \in \mathfrak{g}$. Then the quotient pair $(\overline{J_C},\overline{\rho})$ is a convex structure on $Y\slash \slash G$.
\end{lema}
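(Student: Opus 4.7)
The plan proceeds in four steps: descend $\rho$, descend $J_C$ to an almost complex structure $\overline{J_C}$ on $Y\slash\slash G$, check tameness, and verify plurisubharmonicity, with the hypothesis $d\rho(J_CX_\xi)|_{\mu_Y^{-1}(0)}=0$ playing the decisive role at the final step.

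First, since $\rho$ is $G$-invariant it factors through $\pi$ as $\overline{\rho}\colon Y\slash\slash G\to [0,\infty)$ satisfying $\pi^{\ast}\overline{\rho}=\iota^{\ast}\rho$. Properness follows from properness of $\rho$: for compact $K\subset [0,\infty)$, $\overline{\rho}^{-1}(K)=\pi(\rho^{-1}(K)\cap \mu_Y^{-1}(0))$ is the continuous image of the compact set $\rho^{-1}(K)\cap \mu_Y^{-1}(0)$.

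Second, I construct $\overline{J_C}$ via a canonical horizontal distribution. At $p\in \mu_Y^{-1}(0)$, put $V_p=\{X_\xi(p):\xi\in \mathfrak{g}\}$; the computation $\omega(X_\xi,X_\eta)=-\langle \mu_Y,[\xi,\eta]\rangle\equiv 0$ on $\mu_Y^{-1}(0)$ shows $V_p$ is isotropic, so $V_p^{\omega}=T_p\mu_Y^{-1}(0)$. Tameness of $J_C$ forces $V_p\cap J_CV_p=0$ and makes $V_p\oplus J_CV_p$ a symplectic (and $J_C$-invariant) subspace. Its $\omega$-orthogonal $H_p:=(V_p\oplus J_CV_p)^{\omega}$ is therefore $J_C$-invariant, contained in $V_p^{\omega}=T_p\mu_Y^{-1}(0)$, and complementary to $V_p$ inside $T_p\mu_Y^{-1}(0)$. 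Every ingredient is $G$-invariant, so the distribution $H$ descends and $J_C|_H$ descends to an almost complex structure $\overline{J_C}$ on $Y\slash\slash G$. Tameness is then direct: for nonzero $\bar v=d\pi(v)$ with horizontal lift $v\in H_p$, $\omega_{\operatorname{red}}(\bar v,\overline{J_C}\bar v)=\omega(v,J_Cv)>0$.

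Finally, I verify plurisubharmonicity by proving the identity $\pi^{\ast}(d^c\overline{\rho})=\iota^{\ast}(d^c\rho)$ on $\mu_Y^{-1}(0)$. For $v\in T_p\mu_Y^{-1}(0)$ split as $v=v_V+v_H$ along $V_p\oplus H_p$, the left side equals $d\rho(J_Cv_H)$ (since $J_Cv_H\in H_p\subset T_p\mu_Y^{-1}(0)$, so $d\overline{\rho}(d\pi(J_Cv_H))=d\rho(J_Cv_H)$), while the right side expands as $d\rho(J_Cv_V)+d\rho(J_Cv_H)$; the hypothesis $d\rho(J_CX_\xi)|_{\mu_Y^{-1}(0)}=0$ eliminates the discrepancy. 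Applying $d$ yields $\pi^{\ast}(-dd^c\overline{\rho})=\iota^{\ast}(-dd^c\rho)$; evaluating on the pair $(\bar v,\overline{J_C}\bar v)$ with horizontal lift $v\in H_p$ gives
\[
(-dd^c\overline{\rho})(\bar v,\overline{J_C}\bar v)=(-dd^c\rho)(v,J_Cv)\geq 0
\]
outside $\pi(K\cap \mu_Y^{-1}(0))$, where $K\subset Y$ is a compact set outside of which $\rho$ is $J_C$-plurisubharmonic; this image is itself compact.

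The main obstacle is producing $\overline{J_C}$ on the quotient in a canonical, $G$-equivariant way; the symplectic-orthogonal construction of $H_p$, together with the facts that $V_p$ is isotropic and $V_p\oplus J_CV_p$ is symplectic, supplies the required $J_C$-invariant complementary distribution.
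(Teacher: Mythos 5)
The paper states this lemma without proof (it appears only ``as a corollary'' of the convexity-of-$X_n$ proposition), so there is nothing to compare against; your write-up supplies the missing argument and its overall structure --- descend $\rho$, build a $G$-invariant $J_C$-invariant horizontal complement of the orbit directions inside $T\mu_Y^{-1}(0)$, and transport the identity $\pi^{\ast}(d\overline{\rho}\circ\overline{J_C})=\iota^{\ast}(d\rho\circ J_C)$ through $d$ using the hypothesis $d\rho(J_CX_{\xi})|_{\mu_Y^{-1}(0)}=0$ --- is the right one and the properness, tameness and plurisubharmonicity checks are all correct.

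There is one step that does not hold as written. You claim that $H_p:=(V_p\oplus J_CV_p)^{\omega}$ is $J_C$-invariant ``therefore,'' i.e.\ because $V_p\oplus J_CV_p$ is a $J_C$-invariant symplectic subspace. The symplectic orthogonal of a $J$-invariant subspace is $J$-invariant only when $J$ is $\omega$-\emph{compatible} (one needs $\omega(J\cdot,J\cdot)=\omega(\cdot,\cdot)$); for a merely $\omega$-\emph{tame} $J$, which is all the paper's definition of a convex structure requires of $J_C$, this fails (already in $\R^4$ one can perturb the standard $J$ so that it preserves a symplectic plane but not its symplectic orthogonal, while remaining tame). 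The repair is cheap: either note that one may take $J_C$ compatible, or replace your $H_p$ by
$H_p:=T_p\mu_Y^{-1}(0)\cap J_C\,T_p\mu_Y^{-1}(0)$,
which is $J_C$-invariant by construction, is $G$-invariant, and is a complement of $V_p$ in $T_p\mu_Y^{-1}(0)$ by the dimension count $\dim H_p\ge 2(2n-k)-2n=2n-2k$ together with $V_p\cap H_p=0$ (if $X_\eta=J_CX_\xi$ with $J_CX_\xi\in V_p^{\omega}$ then $\omega(X_\xi,J_CX_\xi)=0$, forcing $\xi=0$ by tameness). Every subsequent step of your proof uses only that $H_p$ is a $G$-invariant, $J_C$-invariant complement of $V_p$ inside $T_p\mu_Y^{-1}(0)$, so the tameness and plurisubharmonicity computations go through verbatim with this choice.
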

We also observe that if the pair $L_0,L_1$ satisfies \eqref{TH}, so does the pair $L_0^{n},L_1^{n}$ inside $X_n$, so we can reduce to distinct discs in our moduli spaces.

In the following, we denote by $\mathcal{J}_{\operatorname{adm}}^{\operatorname{split}}(X_n)$ the space of split almost complex structures that agree with $\tilde{J_C}$ outside a compact set.
\begin{theorem}
  There exists a subset of the second Baire category in the set of smooth paths $\mathcal{J}_{\operatorname{adm}}^{\operatorname{split}}(X_n)$ such that for all $J_t$ in this subset the moduli spaces involved in the cascade Floer differential (of $X_n$ with Lagrangians $L_j^{n}$) are transversely cut-out.
\end{theorem}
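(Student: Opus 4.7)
The plan is to mirror the Sard-Smale argument used in the proof of Theorem~\ref{TransversalityI}, with the central additional ingredient being that split almost complex structures, despite being a restricted class, are still rich enough to achieve transversality thanks to a projection argument that confines strips to single fibers of $\pi_{2}$.

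First I would establish the key geometric reduction: for any split $J_{t}$-holomorphic strip $u:\Theta\to X_{n}$ with boundary on $L_{0}^{n},L_{1}^{n}$, the projection $\pi_{2}\circ u:\Theta\to T^{\ast}BG_{n}$ is constant. Indeed, since $J_{t}$ is split and the seam-free boundary Lagrangians $L_{j}^{n}$ project onto the zero section $0_{BG_{n}}$, the map $\pi_{2}\circ u$ is $J_{BG_{n},t}$-holomorphic with both boundary components in $0_{BG_{n}}$. Because $0_{BG_{n}}$ is exact (it is the zero section of a cotangent bundle, on which the tautological $1$-form vanishes), Stokes' theorem forces the symplectic area, and hence the energy, of $\pi_{2}\circ u$ to vanish, so $\pi_{2}\circ u\equiv (q,0)$ for some $q\in BG_{n}$. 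Via the symplectic embedding $\beta:X\hookrightarrow \pi_{2}^{-1}(q,0)$ constructed earlier (which, as already computed, sends $L_{j}$ into $L_{j}^{n}$), $u$ corresponds to an honest Floer strip $\tilde{u}:\Theta\to X$ for the pair $(L_{0},L_{1})$ that is holomorphic for the restriction of the vertical part $J_{V}$.

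Next I would set up the universal moduli space of distinct tuples $(u_{1},\dots,u_{m})$ of split-$J_{t}$-holomorphic strips together with the path $J_{t}\in\mathcal{J}_{\operatorname{adm}}^{\operatorname{split}}(X_{n})$, using the exponentially weighted Sobolev spaces as in the proof of Theorem~\ref{TransversalityI}. Thanks to the splitting $TX_{n}\cong V\oplus H$, the linearization of $\overline{\partial}_{J_{t}}$ at a strip $u_{k}$ decomposes accordingly. The horizontal part is a constant-coefficient Cauchy-Riemann operator on the trivial bundle $\Theta\times T_{(q,0)}T^{\ast}BG_{n}$ with Lagrangian boundary $T_{(q,0)}0_{BG_{n}}$ on both sides; this is the linearization of Floer theory of the zero section of a cotangent bundle at a constant curve, and it is well-known to have vanishing cokernel. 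For the vertical part, a simple-injective point of $u_{k}$ exists by the \eqref{TH} hypothesis (as already noted in Remark above the Floer complex), and one can perturb $J_{V}$ in a neighbourhood of its image to kill any fixed cokernel element. Crucially, such a perturbation modifies only the vertical component of $J_{t}$ and therefore stays in $\mathcal{J}_{\operatorname{adm}}^{\operatorname{split}}(X_{n})$, so the Sard-Smale theorem applies within the split class.

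Finally, I would verify submersivity of the evaluation maps at the strips' endpoints, which is the input required to form the fibre product with the Morse-Bott data $W^{u}(p)\times\cdots\times W^{s}(q)$ as in the proof of Theorem~\ref{TransversalityI}. The target $L_{0}^{n}\cap L_{1}^{n}\cong(L_{0}\cap L_{1})\times_{G}EG_{n}$ is a fibre bundle over $BG_{n}$ with fibre $L_{0}\cap L_{1}$. The vertical component of the evaluation differential is surjective onto the fibre by the usual Floer-theoretic argument applied inside $X$, while the horizontal component is covered by varying the fibre parameter $q\in BG_{n}$, which is an unconstrained parameter of the universal moduli space since moving $q$ produces a genuinely new solution. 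Combining both, the evaluation is submersive onto $L_{0}^{n}\cap L_{1}^{n}$, and the standard Baire argument (intersect the countably many regular-value subsets indexed by components, homotopy classes, and the number of strips) yields the desired second-category subset. The main obstacle is the first point, checking that the split restriction does not kill the freedom needed in the Sard-Smale perturbation; this is resolved precisely by the observation that the standard perturbation at a simple injective point is intrinsically vertical, so the split constraint is automatically preserved.
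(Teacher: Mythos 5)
Your proposal is correct and follows essentially the same route as the paper: both arguments hinge on the observation that the base projection $\pi_{2}\circ u$ is a holomorphic strip with boundary on the zero section of $T^{\ast}BG_{n}$ and hence constant, then split the linearized operator via $TX_{n}\cong V\oplus H$, obtain surjectivity of the base component from automatic transversality of constant strips (your ``well-known vanishing cokernel'' is the paper's index computation), and achieve surjectivity of the fiber component by perturbing only the vertical part of the almost complex structure, which stays in the split class. The only minor imprecision is attributing the existence of somewhere-injective points to \eqref{TH} — that hypothesis is really used to restrict to \emph{distinct} tuples so the strips can be perturbed independently — but this does not affect the substance of the argument.
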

\begin{proof}
  We start with a simpler statement: we will show the claim for the space of a single Floer strip $u$. We follow essentially the same steps as in \cite[Section 6]{Schmaschke}: given connected components $C_{-},C_{+}$ of $L_0^{n}\cap L_1^{n}$, consider the Banach manifold $\mathcal{B}^{1,p}_{\delta}(C_{-},C_{+})$ of maps $u:\Theta\to X_n$ of regularity $W^{1,p}_{\delta}$. The subscript $\delta$ means $u$ has $\delta$-fast exponential convergence as $s\to\pm \infty$ to tangent vectors in $C_{\pm}$, and consider the Banach bundle $\mathcal{E}^{p}_{\delta}\to \mathcal{B}^{1,p}_{\delta}(C_{-},C_{+})$ whose fiber over $u$ is $L^{p}_{\delta}(\Gamma(u^{\ast}TX_n\otimes \Omega^{0,1}_{\Theta}))$ the space of $L^{p}$ sections with exponential decay. The Cauchy-Riemann operator is a section of this bundle and the linearization is Fredholm for a sufficiently small $\delta$.

  The only change in the proof is to show that the universal moduli space
  \begin{equation*}
    \tilde{\mathcal{M}}(C_{-},C_{+}, \mathcal{J}_{\operatorname{adm}}^{\operatorname{split}}):=\left\{
      (u,J_t)\mid\begin{aligned}
        &u\text{ is a }J_t\text{-holomorphic Floer strip},\\
        &\lim_{s\to\pm \infty} u(s,t)\in C_{\pm},\text{ and},\\
        &J_t\in \mathcal{J}_{\operatorname{adm}}^{\operatorname{split}}(X_n)
      \end{aligned}\right\}
  \end{equation*}
  is a Banach manifold. To see this, it suffices to show that at $(u,J_{t})\in \tilde{\mathcal{M}}(C_{-},C_{+}, \mathcal{J}_{\operatorname{adm}}^{\operatorname{split}})$ the operator
  \begin{eqnarray*}
    D^{\operatorname{univ}}_{u,J_t}: T_{u}\mathcal{B}^{1,p}_{\delta}(C_{-},C_{+})\oplus T_{J_t}\mathcal{J}_{\operatorname{adm}}^{\operatorname{split}}(X_n)&\to& \mathcal{E}^{p}_{\delta,u}\\
    (\xi, Y) &\mapsto & D_{u,J_t}(\xi)+  Y(\partial_t u)
  \end{eqnarray*}
  is surjective. The tangent space $T_{u}\mathcal{B}^{1,p}_{\delta}$ is described as sections of $u^{\ast}TX_n$ of regularity $W^{1,p}$ and $\delta$-fast exponential convergence.

  Note that, since $\pi_2:X_n\to T^{\ast}BG_n$ is holomorphic for any split almost complex structure, then $\pi_{2}(u)$ is a pseudoholomorphic strip with boundary in the zero-section, so it is constant and the image of $u$ lies entirely in a fiber, symplectomorphic to $X$.

  We split $u^{\ast}TX_n$ using $TX_n=V\oplus H$ where $H$ is the aforementioned horizontal distribution, and so $u^{\ast}TX_n\cong u^{\ast}V\oplus u^{\ast}H$. Note that we can trivialize $u^{\ast}H$ so that it is isomorphic to the trivial bundle $u^{\ast}H\cong u^{\ast}T_{\pi_2(u)}T^{\ast}BG_n$. With this splitting and identifying the fiber with $X$ we can write
  $$T_{u}\mathcal{B}^{1,p}_{\delta}(C_{-},C_{+})\cong T_{u'}\mathcal{B}^{1,p}_{\delta}(X,C_{-}', C_{+}')\oplus T_{\pi_{2}(u)}\mathcal{B}^{1,p}_{\delta}(T^{\ast}BG_n, 0_{BG_n}, 0_{BG_n})$$
  where $u'$ is the fiber part of $u$, and $C_{\pm}^{'}$ denote the connected components of $L_0\cap L_1\subset X$. Similarly, we split
  $$\mathcal{E}^{p}_{\delta,u}\cong \mathcal{E}^{p}_{\delta,u'}\oplus \mathcal{E}^{p}_{\delta, \pi_2(u)}$$
  and the tangent space at $J_t=J_t'\oplus J_t''$ of  $T_{J_t}\mathcal{J}_{\operatorname{adm}}^{\operatorname{split}}(X_n)$ as $T_{J_t'}\mathcal{J}_{\operatorname{adm}}(X)\oplus T_{J_t''}\mathcal{J}_{\operatorname{adm}}(T^{\ast}BG_n)$. With these splittings in mind, the universal operator takes the form
  $$D^{\operatorname{univ}}_{u,J_t}: (\xi, \xi', Y,Y')\mapsto (D_{u',J_t'}(\xi)+ Y(\partial_t u'), D_{\pi_2(u),J_t''}(\xi')).$$
  Now as in \cite{Schmaschke}, section 6.4, the existence of regular points implies that the first component of this map is surjective. To show that the second factor is surjective it is enough to see that constant Floer strips on $T^{\ast}BG_n$ with boundary on $0_{BG_n}$ are transversely cut-out, but this follows from an index computation, as the space of constant solutions has dimension $\dim BG_n$ and the index is $\operatorname{Mas}(\pi_2(u))+\dim BG_n=\dim BG_n$. Alternatively this is shown in \cite{Lekili-Lipyanskiy}, section 2.4.

  The rest of the proof follows the same steps as \cite{Schmaschke} section 6 with this fact in mind.

  \textbf{Note}: We cheated a bit, in the sense that $\mathcal{J}_{\operatorname{adm}}^{\operatorname{split}}(X_n)$ is \emph{not} a Banach manifold. To get around this we use the Floer $\varepsilon$-norm as in \cite{Schmaschke} section 6.5 to find a subspace which is a separable Banach manifold. The proof goes through without any other changes.
\end{proof}
Thanks to this result, we can achieve transversality for the space of the Floer differential using split structures, and for each $n$ we get a Floer chain complex $FC_{\ast}(L_0^{n},L_1^{n})$.
\subsection{Increment correspondence and pushforward}
Following \cite{Cazassus}, we construct here a Lagrangian correspondence $\mathcal{I}_n\subset X_n\times\overline{X_n}$, which, combined with quilts, give rise to chain maps $FC_{\ast}(L_0^{n},L_1^{n})\to FC_{\ast}(L_0^{n+1},L_1^{n+1})$. These will be used for the telescope construction.

Given a map $\iota_n:EG_n\hookrightarrow EG_{n+1}$ of the fixed finite-dimensional approximation, we consider the correspondence
$$N_{\Gamma(\iota_n)}:= \{((q,p),(q',p'))\in T^{\ast}EG_n\times T^{\ast}EG_{n+1}\mid q'=\iota_n(q), p=\iota_{n}^{\ast}p'\}$$
and define a Lagrangian correspondence
$$\widehat{\mathcal{I}_n}:= \Delta_{X}\times N_{\Gamma(\iota_n)}\subset X\times T^{\ast}EG_n\times\overline{X\times T^{\ast}EG_{n+1}}.$$
Note that it is $G\times G$-invariant where the action is given by
$$(g_1,g_2)\cdot ((x,(q,p)),(y,(q',p')))= ((g_1\cdot x,g_1\cdot (q,p)), (g_2\cdot y, g_2\cdot (q',p'))).$$
This action is Hamiltonian with moment map
\begin{eqnarray*}
  \Phi_n: X\times T^{\ast}EG_n\times\overline{X\times T^{\ast}EG_{n+1}} & \to & \mathfrak{g}^{\ast}\oplus \mathfrak{g}^{\ast}\\
  (x_1,x_2)&\mapsto & (\mu_n(x_1),-\mu_{n+1}(x_2)).
\end{eqnarray*}
Note that the sign of the second term is negative since the symplectic structure is $-\omega_n$. The correspondence $\widehat{\mathcal{I}_n}$ is a $G$-invariant Lagrangian with respect to the diagonal of the $G\times G$-action. As shown in \cite{Cazassus} section 4.3, the intersection $\widehat{\mathcal{I}_n}\cap \Phi^{-1}(0)$ is clean, and we define
$$\widetilde{\mathcal{I}_n}:= G\times G\cdot\left( \widehat{\mathcal{I}_n}\cap \Phi^{-1}(0)\right).$$
Finally, set
$$\mathcal{I}_n:= \widetilde{\mathcal{I}_n}\slash (G\times G).$$
Define the quilted surface $\underline{Z}$ as follows:
\begin{itemize}
\item The total space is $Z=\Theta\setminus\{0,i\}= \{s+it\in \C\mid t\in [0,1]\}\setminus\{0,i\}$,
\item there is only one seam, given by $S=\{it\mid t\in (0,1)\}$,
\item there is one incoming end at $s\to-\infty$, one outgoing end as $s\to +\infty$ and at the points $0,i$ we have two free ends, which we will simply assign to be of the form $\R_{>0}\times [0,\delta]$ for some $\delta<1$.
\end{itemize}
The surface has two patches, $P_0$ and $P_1$ given by the left and right sides of the seam respectively, and four true boundary components
$$
\begin{cases}\partial^{j}P_0= \{s+it\in P_0\mid t=j \}, &j=0,1,\\
  \partial^{j}P_1=\{s+it\in P_1\mid t=j\}, & j=0,1.
\end{cases}$$
We decorate the quilted surface $\underline{Z}$ by assigning the manifolds $X_n$ and $X_{n+1}$ to $P_0$ and $P_1$ respectively, the Lagrangian correspondence $\mathcal{I}_n$ to the seam $S$, the Lagrangian submanifolds $L_j^{n}$ to $\partial^{j}P_0$ and $L_j^{n+1}$ to $\partial_j P_1$. We draw the following picture:
\begin{figure}[h]
  \centering
  \begin{tikzpicture}
    \fill[cyan!20!white] (0,0) rectangle +(5,1);
    \fill[red!20!white] (5,0) rectangle +(5,1);
    \draw (0,1) -- (10,1);
    \draw (0,0) -- (10,0);
    \draw (5,0) -- (5,1)node[above]{$\mathcal{I}_{n}$};
    \path (2.5,0.5)node{$X_{n}$}--(7.5,0.5)node{$X_{n+1}$};
    \path (2.5,-0.3)node{$L^{n}_{0}$}--+(5,0)node{$L^{n+1}_{0}$};
    \path (2.5,1.3)node{$L^{n}_{1}$}--+(5,0)node{$L^{n+1}_{1}$};
    \path[every node/.style={draw,circle,fill=white,inner sep=1pt}] (5,1)node{}--(5,0)node{};
  \end{tikzpicture}
  \caption{The decorated quilted surface $\underline{Z}$.}
  \label{fig:dec-quilt-surf}
\end{figure}
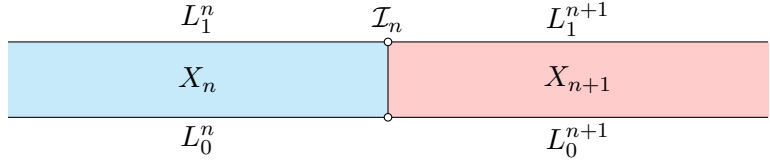

We now associate a subclass of quilted perturbation data to $\underline{Z}$:
\begin{itemize}
\item The quilted almost complex structures take values in split almost complex structures $J_{P_0}:P_0\to \mathcal{J}^{\operatorname{split}}_{\operatorname{adm}}(X_n)$, $J_{P_1}:P_1\to \mathcal{J}^{\operatorname{split}}_{\operatorname{adm}}(X_{n+1})$, and are asymptotic on the incoming and outgoing ends to $J_t$ and $J_t'$.
\item The quilted Hamiltonian terms are compactly supported in the interior of each patch.
\end{itemize}
Denote by $\operatorname{Per}(\mathcal{I}_n)$ the class of such perturbation data and a given element by $(\underline{J},\underline{H})$. Introduce $\mathcal{Q}(\mathcal{I}_n, C_{-},C_{+})$ as the set of pseudoholomorphic quilts $\underline{u}=(u_1,u_2)$ such that:
\begin{enumerate}
\item $E(\underline{u})=E(u_1)+E(u_2)<\infty,$
\item  $\lim\limits_{s\to\pm\infty}\underline{u}(s,t)\in C_{\pm}$.
\end{enumerate}
We generally suppress the perturbation data from the notation, and only add it when relevant.
\begin{theorem}\label{TransversalityII}
  For a given $\underline{J}$, there exists a comeagre subset of Hamiltonian perturbations $\mathcal{H}$ such that for all $(\underline{J}, \underline{H})\subset \operatorname{Per}(\mathcal{I}_n)$ with $\underline{H}\in \mathcal{H}$ the moduli space $\mathcal{Q}(\mathcal{I}_n, C_{-},C_{+})$ is a smooth manifold of dimension:
  $$\operatorname{Mas}(\underline{u})+\frac{1}{2}\dim C_{-}+\frac{1}{2}\dim C_{+}-\frac{1}{2}(\dim BG_{n+1}-\dim BG_n)$$
  and the evaluation maps:
  \begin{eqnarray*}
    \operatorname{ev}_j: \mathcal{Q}(\mathcal{I}_n, C_{-},C_{+})&\to & C_{-} \slash C_{+}\\
    \underline{u}=(u_1,u_2)&\mapsto & u_1(-\infty)\slash u_2(+\infty)
  \end{eqnarray*}
  are submersive.
\end{theorem}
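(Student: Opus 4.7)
The strategy closely parallels the transversality argument for the single Floer strip in $X_n$ given in the previous theorem, adapted to the two-patch quilted setting.

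First, I would set up the Banach manifold $\mathcal{B}^{1,p}_\delta(C_-,C_+)$ of pairs $(u_1,u_2)$ with $u_i \in W^{1,p}_\delta$, carrying the true-boundary conditions $u_1(\partial^j P_0)\subset L_j^n$, $u_2(\partial^j P_1)\subset L_j^{n+1}$, the seam condition $(u_1,u_2)\rvert_S\subset \mathcal{I}_n$, and $\delta$-exponential convergence at the strip-like ends to tangent vectors based at points of $C_\pm$. Over this base I form the Banach bundle whose fiber over $\underline{u}$ is $L^p_\delta(u_1^*TX_n\otimes \Omega^{0,1})\oplus L^p_\delta(u_2^*TX_{n+1}\otimes \Omega^{0,1})$; the Cauchy-Riemann operator $\overline{\partial}_{\underline{J},\underline{H}}$ is a Fredholm section for $\delta$ small. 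Its index, which gives the claimed dimension, follows from the standard Maslov-Viterbo index arithmetic for quilts with clean intersection components (see \cite[Appendix C]{Schmaschke} and \cite{Wehrheim-Woodward1}), the shift $-\tfrac{1}{2}(\dim BG_{n+1}-\dim BG_n)$ accounting for the jump across the increment correspondence.

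Next I form the universal moduli space by letting $\underline{H}$ vary inside a separable Banach submanifold of $\operatorname{Per}(\mathcal{I}_n)$ (the Floer $\varepsilon$-norm trick of \cite[Section 6.5]{Schmaschke}), keeping $\underline{J}$ fixed. The central point is surjectivity of the universal linearized operator. Here I exploit the split structure exactly as in the proof of the previous theorem: every split $\underline{J}$ makes $\pi_2$ holomorphic, so each $\pi_2(u_i)$ is pseudoholomorphic, and the pair $(\pi_2 u_1,\pi_2 u_2)$ assembles into a pseudoholomorphic quilt valued in $T^*BG_n$ and $T^*BG_{n+1}$, with seam in the descended correspondence $N_{\Gamma(\iota_n)}\slash(G\times G)$ and true boundaries in the zero sections. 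Using the symplectic decomposition $TX_{n+i}=V\oplus H$ along each $u_i$, I split the universal operator into a vertical and a horizontal component.

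For the vertical component, the situation reduces in each fiber to a Floer strip in $X$ with Lagrangians $L_0,L_1$ (the diagonal seam being filled in by the $\Delta_X$-factor of $\mathcal{I}_n$); since the vertical part of $\underline{H}$ can be perturbed freely, the transversality argument of \cite[Section 6]{Schmaschke} applies verbatim and yields surjectivity. For the horizontal component, the exactness of the zero sections and of the conormal-type correspondence in $T^*BG_n\times \overline{T^*BG_{n+1}}$ forces horizontal quilts onto a finite-dimensional moduli space whose index is computed directly and matches $\dim BG_n$, as in the constant-strip argument of the previous theorem; automatic transversality of constant quilts with boundaries on the zero sections is then an index computation in the spirit of \cite{Lekili-Lipyanskiy}. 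Combining these two surjectivity statements gives surjectivity of the full universal operator, and Sard-Smale produces a residual set of $\underline{H}$ realising $\mathcal{Q}(\mathcal{I}_n,C_-,C_+)$ as a smooth manifold of the expected dimension.

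Finally, submersivity of the evaluation maps $\operatorname{ev}_\pm$ is obtained by enlarging the universal moduli space to track the asymptotics: since the perturbation $\underline{H}$ may be chosen with support arbitrarily close to the strip-like ends, varying it sweeps out an open set of the asymptotic intersection component, forcing $\operatorname{ev}_\pm$ to be submersive on the quotient. The main obstacle I anticipate is the bookkeeping for the horizontal index computation, namely verifying that the descent of the Lagrangian correspondence $N_{\Gamma(\iota_n)}$ through the Marsden-Weinstein reduction contributes exactly the correction $-\tfrac{1}{2}(\dim BG_{n+1}-\dim BG_n)$ to the total index; everything else is a direct adaptation of the established clean-intersection and quilted-surface techniques.
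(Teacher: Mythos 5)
Your Fredholm setup and your index computation follow the paper's route: the paper also fixes a split reference datum with $\underline H=0$, observes that $\pi_2(\underline u)$ is then a constant quilt in $T^{\ast}BG_n\times T^{\ast}BG_{n+1}$ with seam $N_{\Gamma(\overline{\iota_n})}$, splits the operator into fiber and base parts, and gets $\operatorname{Mas}(\underline u)+\tfrac12\dim C_-'+\tfrac12\dim C_+'+\dim BG_n$, which rearranges to the stated formula. That part of your proposal is fine.

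The surjectivity argument, however, has a genuine gap. You write that ``every split $\underline J$ makes $\pi_2$ holomorphic, so each $\pi_2(u_i)$ is pseudoholomorphic,'' and you then split the \emph{universal} linearized operator into vertical and horizontal blocks. This is only valid when $\underline H=0$: for a perturbed solution the equation $(du_P-X_{K_P})^{0,1}=0$ does not project under $\pi_2$ unless $X_{K_P}$ is vertical, so the image of $\underline u$ need not lie in a fiber and the linearization acquires off-diagonal terms mixing $V$ and $H$. The paper uses the splitting \emph{only} for the index (which is deformation-invariant, so one may set $\underline H=0$ there); for surjectivity it runs the standard argument with no splitting at all: take $\underline\eta$ in the cokernel, note it is smooth by elliptic regularity, pair it against arbitrary $(\delta K)^{0,1}$ supported in open sets $U_i\subset\operatorname{int}(P_i)$ to force $\eta_i|_{U_i}\equiv 0$, and conclude by unique continuation. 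Your version could be salvaged by restricting to fiberwise Hamiltonians (then solutions do project to constants and the operator splits), but that proves transversality only within a smaller perturbation class, not a comeagre subset of $\operatorname{Per}(\mathcal I_n)$ as stated. Separately, your argument for submersivity of $\operatorname{ev}_\pm$ does not work as written: the perturbations are compactly supported in the patch interiors, so they cannot ``sweep out'' the asymptotic limits directly. The correct mechanism (used in the paper, following Frauenfelder) is that the universal operator is already surjective on the subspace $T^0_{\underline u}$ of variations vanishing at the incoming and outgoing ends; given $v$ tangent to $C_\mp$ one picks any $(\underline\xi_0,\underline K_0)$ with $d\operatorname{ev}(\underline\xi_0,\underline K_0)=v$ and corrects it by an element of $T^0$ solving the same linearized equation, producing a kernel element evaluating to $v$.
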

\begin{proof} As before, consider the Banach manifold of quilted maps $\mathcal{B}^{1,p}_{\delta}(C_{-},C_{+})$ of local regularity $W^{1,p}$, such that at each end we have $\delta$-fast exponential convergence (note that we also have to assure exponential convergence on the free ends). Here $\delta$ is a constant that only depends on the Lagrangians, $\mathcal{I}_{n}$ and the asymptotic almost complex structures of $\underline{J}$. Now the tangent space space at a given $\underline{u}=(u_1,u_2)$ is given by quilted $W^{1,p}_{\delta}$ sections of $u_{1}^{\ast}TX_{n}\oplus u_{2}^{\ast}TX_{n+1}$ that satisfy the boundary conditions and converge $\delta$-exponentially fast to tangent vectors on the ends. We denote it by $T_{\underline{u}}\mathcal{B}^{1,p}_{\delta}(C_{-},C_{+})$ and we denote by $T_{\underline{u}}^{0}\mathcal{B}^{1,p}_{\delta}(C_{-},C_{+})$ the subspace of all sections that exponentially converge to zero on the incoming and outgoing ends. Following the standard arguments, one also considers the Banach bundle $\mathcal{E}^{p}_{\delta}$ whose fiber over $\underline{u}$ is: $$L^{p}_{\delta}(\Gamma(u_{1}^{\ast}TX_n\otimes \Omega^{0,1}_{P_0}))\oplus L_{\delta}^{p}(\Gamma(u_{2}^{\ast}TX_{n+1}\otimes \Omega^{0,1}_{P_1})).$$

  Consider the space of Hamiltonian perturbations supported in a given open set $U=U_1\sqcup U_2$ where $U_{j}\subset \operatorname{int}(P_j)$ and denote it by $\mathcal{H}_{U}$. To prove the statement it suffices to show that the \emph{universal moduli space} given by
  $$\tilde{\mathcal{Q}}(C_{-},C_{+},\mathcal{I}_n,  \mathcal{H}_{U}):= \{ (\underline{u},\underline{H})\mid \underline{u}\in \mathcal{Q}(C_{-},C_{+}, \mathcal{I}_n;\underline{H}), \underline{H}\in \mathcal{H}_{U}\}$$
  is a Banach manifold. To see this we only need to show that the universal operator:
  \begin{eqnarray*}
    D^{\operatorname{univ}}_{\underline{u},\underline{J},\underline{H}}: T_{\underline{u}}\mathcal{B}^{1,p}_{\delta}(C_{-},C_{+})\oplus T_{\underline{H}}\mathcal{H}_{U}&\to & \mathcal{E}^{p}_{\delta,\underline{u}}\\
    (\underline{\xi},\underline{K})&\mapsto & D_{\underline{u},\underline{J},\underline{H}}(\underline{\xi})+ (\delta K)^{0,1}
  \end{eqnarray*}
  is surjective, where $(\delta K)^{0,1}$ denotes the fiberwise one-form with values in Hamiltonian vector fields obtained from $K$. Take $\underline{\eta}$ in the cokernel. It is, in particular, in the kernel of the formal adjoint of $D_{\underline{J},\underline{H}}\rvert_{T^{0}_{\underline{u}}\mathcal{B}^{1,p}_{\delta}}$ so by elliptic regularity it is smooth. Furthermore, as explained in \cite[Section 9k]{Seidel2}, we have that:
  $$\int_{P_0}\langle \eta_1, (\delta K_1)^{0,1}\rangle= \int_{P_1}\langle \eta_2, (\delta K_2)^{0,1}\rangle=0.$$
  But inside $U_1$ and $U_2$, the terms $(\delta K_i)^{0,1}$ can be chosen in an arbitrary way, so $\eta_i\rvert_{U_i}\equiv 0$. But analytic continuation implies that $\eta_i\equiv 0$ everywhere, hence the operator is surjective.

  We have shown something stronger: the operator $D^{\operatorname{univ}}_{\underline{u}, \underline{J},\underline{H}}$ is surjective when restricted to $T^{0}_{\underline{u}}\mathcal{B}^{1,p}_{\delta}(C_{-},C_{+})$. So to show that the evaluation map $\operatorname{ev}_0$ is submersive, we can take an arbitrary section pair $(\underline{\xi}_0, \underline{K}_0)$ in  $T_{\underline{u}}\mathcal{B}^{1,p}_{\delta}(C_{-}) \oplus T_{\underline{H}}\mathcal{H}_{U}$ such that $d\operatorname{ev}_0 (\underline{\xi}_0,\underline{K}_0)= v\in T_{\operatorname{ev}_0 (\underline{u})}C_{-}$. By surjectivity, there exists $(\underline{\xi}_1,\underline{K}_1)$ such that $	D^{\operatorname{univ}}_{\underline{u},\underline{J},\underline{H}}(\underline{\xi}_1,\underline{K}_1)=         D^{\operatorname{univ}}_{\underline{u},\underline{J},\underline{H}}(\underline{\xi}_0, \underline{K}_0)$ and so $(\underline{\xi}_0-\underline{\xi}_1, \underline{K}_0-\underline{K}_1)$ is in the kernel and satisfies that the evaluation at $-\infty$ is $v$. We proceed in an identical way for $\operatorname{ev}_1$. This idea is present in \cite[Theorem C.13]{Frauenfelder}.

  As is standard, the projection from the universal moduli space onto the second coordinate satisfies the hypotheses for the Sard-Smale theorem and so the set of regular values is of the second Baire category and for these regular values the spaces $\mathcal{Q}(\mathcal{I}_n, C_{-},C_{+})$ are transversely cut-out and have submersive evaluation maps.

  \textbf{Warning}: As done before, there is a slight gap in the argument: the space of Hamiltonian perturbations is not Banach but only Fréchet. One should replace it by a dense Banach submanifold using Floer's $\varepsilon$-norm, as pointed out in \cite[Section 9k]{Seidel2}. The proof goes through without any changes.

  \textbf{Fredholm index}: The index is independent of our choice of Hamiltonian perturbations and almost complex structures (since the operators are Fredholm-homotopic), so we fix a $G$-invariant almost complex structure $J^{\operatorname{ref}}$ on $X$ (these exist and form a contractible space) and consider two split almost complex structures $J^{\operatorname{ref}}\oplus \pi_{2}^{\ast}J_{BG_n}$ and $J^{\operatorname{ref}}\oplus \pi_{2}^{\ast}J_{BG_{n+1}}$ and we also set the Hamiltonian perturbations to zero. Take $\underline{u}$ a solution, then the projection $\pi_2(\underline{u})=(\pi_2(u_1),\pi_2 (u_2)): P_0\times P_1 \to T^{\ast}BG_n\times T^{\ast}BG_{n+1}$ is a pseudoholomorphic quilt with Lagrangian correspondence $N_{\Gamma(\overline{\iota_n})}$ in the middle (here $\overline{\iota_n}:BG_n\to BG_{n+1}$ is the induced map from $\iota_n:EG_n\to EG_{n+1}$): it must then have zero area and thus be constant.
  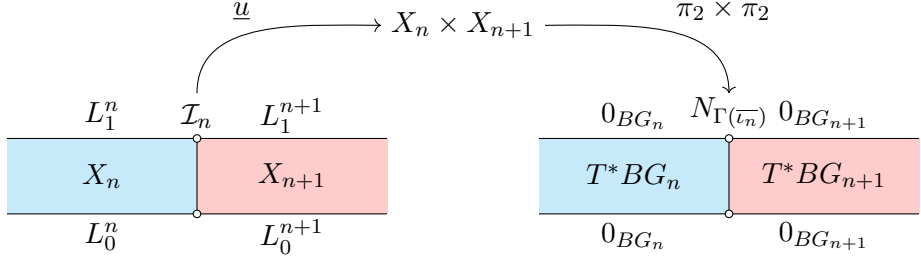
\begin{figure}[h]
    \centering
    \begin{tikzpicture}
      \begin{scope}[xscale=.5]
        \fill[cyan!20!white] (0,0) rectangle +(5,1);
        \fill[red!20!white] (5,0) rectangle +(5,1);
        \draw (0,1) -- (10,1);
        \draw (0,0) -- (10,0);
        \draw (5,0) -- (5,1)node[above]{$\mathcal{I}_{n}$};
        \path (2.5,0.5)node{$X_{n}$}--(7.5,0.5)node{$X_{n+1}$};
        \path (2.5,-0.3)node{$L^{n}_{0}$}--+(5,0)node{$L^{n+1}_{0}$};
        \path (2.5,1.3)node{$L^{n}_{1}$}--+(5,0)node{$L^{n+1}_{1}$};
        \path[every node/.style={draw,circle,fill=white,inner sep=1pt}] (5,1)node{}--(5,0)node{};
      \end{scope}
      \begin{scope}[shift={(7,0)},xscale=.5]
        \fill[cyan!20!white] (0,0) rectangle +(5,1);
        \fill[red!20!white] (5,0) rectangle +(5,1);
        \draw (0,1) -- (10,1);
        \draw (0,0) -- (10,0);
        \draw (5,0) -- (5,1)node[above]{$N_{\Gamma(\overline{\iota_{n}})}$};
        \path (2.5,0.5)node{$T^{*}BG_{n}$}--(7.5,0.5)node{$T^{*}BG_{n+1}$};
        \path (2.5,-0.3)node{$0_{BG_{n}}$}--+(5,0)node{$0_{BG_{n+1}}$};
        \path (2.5,1.3)node{$0_{BG_{n}}$}--+(5,0)node{$0_{BG_{n+1}}$};
        \path[every node/.style={draw,circle,fill=white,inner sep=1pt}] (5,1)node{}--(5,0)node{};        
      \end{scope}
      \path (2.5,1.5) +(3.5,1) node(A){$X_{n}\times X_{n+1}$};
      \draw[->] (2.5,1.6)to[out=90,in=180]node[above left]{$\underline{u}$}(A.west);
      \draw[->] (A.east)to[out=0,in=90] node[above right]{$\pi_{2}\times\pi_{2}$}(9.5,1.6);
    \end{tikzpicture}    
    \caption{The projection of $\underline{u}$ can be seen as a pseudoholomorphic quilt with the indicated boundary conditions, it is therefore constant.}
    \label{fig:project-quilt-constant}
  \end{figure}

  So $\underline{u}$ can be factored into the fibers $X\times X$, and the seam would now be decorated with the diagonal and the boundary would be decorated by $L_0$ and $L_1$ respectively. Since the linearized Cauchy-Riemann operator splits as explained before, we can compute each index separately.

  \textbf{Index on the fiber}: Since the seam is decorated by the diagonal, we may choose a complex structure that erases the seam completely and thus get a map from the standard Floer strip to $X$ with Lagrangian boundary conditions. The index is calculated in \cite{Schmaschke}, section 5 and is given by
  $$\ind D_{u'}^{\operatorname{fiber}}=\operatorname{Mas}(u')+\frac{1}{2}\dim C_{-}'+ \frac{1}{2}\dim C_{+}'$$
  where $u'$ is the identified map on the fiber and $C_{\pm}'$ denotes the connected component of the fiber of $C_{\pm}$.

  \textbf{Index on the base}: By a standard biholomorphism we may remove the singularities at $\pm\infty$ (see for example \cite{Oh2}) and reduce to the problem of quilted half-discs, which in turn can be folded to give strips on the product.
  
  By formula 5.8 in \cite{Schmaschke}, we know that the index of any such strip is
  $$\ind D_{\pi_2(\underline{u})}^{\operatorname{base}}=\operatorname{Mas}(\pi_2(\underline{u}))+\dim(0_{BG_n}\times 0_{BG_{n+1}}\cap N_{\Gamma(\overline{\iota_n})})= \dim BG_n.$$
  In summary, the index is given by
  \begin{align*}
    \ind D_{\underline{u}}&= \operatorname{Mas}(\underline{u})+\frac{1}{2}\dim C'_{-}+\frac{1}{2}\dim C_{+}'+\dim BG_n\\
                          &= \operatorname{Mas}(\underline{u})+\frac{1}{2}\dim C_{-}+\frac{1}{2}\dim C_{+}-\frac{1}{2}(\dim BG_{n+1}-\dim BG_{n}),
  \end{align*}
  using $\dim C_{-}= \dim C_{-}'+\dim BG_n$ and $\dim C_{+}=\dim C_{+}'+\dim BG_{n+1}.$ The dimension formula follows.
\end{proof}
\begin{obs}
  Note that the theorem holds with $\underline{J}$ \emph{fixed} and varying the Hamiltonian perturbations only.
\end{obs}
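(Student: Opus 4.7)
The plan is to prove the transversality statement by the standard universal-moduli-space argument, and then compute the Fredholm index by exploiting the split structure of the almost complex structures to decompose the linearized problem into a base block on $T^{\ast}BG$ and a fiber block on $X$. The dimension count of the theorem should then emerge from summing the two indices and accounting for how the dimensions of $C_{\pm}$ decompose under the associated bundle structure of $L_j^n$.

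For transversality, I would introduce the Banach manifold $\mathcal{B}^{1,p}_{\delta}(C_{-},C_{+})$ of quilted maps of class $W^{1,p}$ with $\delta$-exponential convergence at every end, with $\delta$ chosen below the spectral gap of the asymptotic operators so that the Cauchy--Riemann operator is Fredholm, and the Banach bundle $\mathcal{E}^{p}_{\delta}$ with fibers the corresponding $L^{p}_{\delta}$ spaces of $(0,1)$-forms. Fixing $\underline{J}$ and varying only $\underline{H}$ in a space of Hamiltonian perturbations supported in open sets $U_j\subset \operatorname{int}(P_j)$, the universal linearization is surjective iff any cokernel element $\underline{\eta}$ vanishes: elliptic regularity for the formal adjoint makes $\underline{\eta}$ smooth, pairing with Hamiltonian variations forces $\eta_i\rvert_{U_i}\equiv 0$, and unique continuation propagates this to all of $P_i$. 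Sard--Smale then yields the comeagre set of regular $\underline{H}$. The same surjectivity in fact holds on the subspace of sections vanishing at $\pm\infty$, which immediately gives submersivity of the evaluation maps $\operatorname{ev}_j$: any prescribed asymptotic tangent value can be realized by correcting an arbitrary lift by a kernel element, an idea already present in the Frauenfelder-style transversality arguments.

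For the index, I would Fredholm-homotope to a convenient reference: a $G$-invariant split $\underline{J}$ and zero Hamiltonian term. Applying $\pi_2\times\pi_2$ produces a pseudoholomorphic quilt in $T^{\ast}BG_n\times T^{\ast}BG_{n+1}$ with boundary on the zero sections and seam $N_{\Gamma(\overline{\iota_n})}$; relative exactness forces this projection to have zero area, hence to be constant, so $\underline{u}$ factors through a single fiber $X\times X$. The linearization splits accordingly. The fiber block is, after folding away the diagonal seam, a Floer strip in $X$ for $(L_0,L_1)$, whose index is $\operatorname{Mas}(\underline{u})+\frac12 \dim C'_{-}+\frac12 \dim C'_{+}$ by the formula in Schm\"aschke's monograph, where $C'_{\pm}$ denote the underlying components in $L_0\cap L_1$. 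The base block is a constant quilt with Maslov $0$ and seam $N_{\Gamma(\overline{\iota_n})}$, of index $\dim BG_n$ by the standard cotangent-bundle computation. Summing and using $\dim C_{-}=\dim C'_{-}+\dim BG_{n}$ together with $\dim C_{+}=\dim C'_{+}+\dim BG_{n+1}$ recovers the claimed formula.

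The main technical obstacle I anticipate is that the natural space of smooth compactly supported Hamiltonian perturbations is Fr\'echet rather than Banach, so Sard--Smale does not apply on the nose. I would bypass this by Floer's $\varepsilon$-norm construction to produce a dense separable Banach submanifold of perturbations on which the universal argument runs, noting that $C^{\infty}$-density is enough to transfer the resulting regularity statement back. A secondary subtlety is that the seam Lagrangian $\mathcal{I}_n$, being a quotient rather than a genuine product, must still be handled as a clean Lagrangian correspondence of the expected dimension; once this is verified (using that $\widehat{\mathcal{I}_n}\cap \Phi^{-1}(0)$ is clean as shown in Cazassus), the splitting of the linearization across base and fiber proceeds exactly as for products, and both halves of the theorem follow.
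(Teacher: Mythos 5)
Your proposal is correct and follows essentially the same route as the paper's own proof of Theorem \ref{TransversalityII}: a universal moduli space argument varying only $\underline{H}$ with $\underline{J}$ held fixed (cokernel smoothness, vanishing on the support sets, unique continuation, Sard--Smale, the Floer $\varepsilon$-norm fix), the kernel-correction trick for submersivity of the evaluation maps, and the index computation via Fredholm homotopy to a split $G$-invariant reference structure with the base/fiber decomposition. The remark in question is precisely the observation that this argument never perturbs $\underline{J}$, which your write-up makes explicit.
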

When the connected components $C_{-}$ or $C_{+}$ are irrelevant, we will simply write $\mathcal{Q}(\mathcal{I}_n)$ for the space of quilted strips as above.
\begin{cor}
  If $\underline{H}=0$, then the moduli space of constant $\mathcal{I}_n$-quilts is transversely cut-out and diffeomorphic to the graph
  $$\Gamma(\Id\times_G \iota_n)\subset (L_0^n\cap L_1^n)\times (L_0^{n+1}\cap L_1^{n+1}).$$
\end{cor}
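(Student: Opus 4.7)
The plan is to proceed in three steps: identify the constant quilts set-theoretically with an intersection inside $X_n\times X_{n+1}$, unpack the definition of $\mathcal{I}_n$ to recognize that intersection as the graph of $\Id\times_G\iota_n$, and finally verify transversality by reducing to the fiber/base splitting already used in Theorem \ref{TransversalityII}.

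First, with $\underline{H}=0$, a constant tuple $\underline{u}=(u_1\equiv p_1,u_2\equiv p_2)$ automatically satisfies $\overline{\partial}_{\underline{J},\underline{H}}\underline{u}=0$ and $E(\underline{u})=0$, so the boundary and seam conditions reduce to $p_1\in L_0^{n}\cap L_1^{n}$, $p_2\in L_0^{n+1}\cap L_1^{n+1}$ and $(p_1,p_2)\in\mathcal{I}_n$. Thus the set of constant quilts is exactly $\mathcal{I}_n\cap\bigl((L_0^{n}\cap L_1^{n})\times(L_0^{n+1}\cap L_1^{n+1})\bigr)$.

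Second, I would pick a lift of $p_j$ of the form $(x_j,(q_j,0))$ with $x_j\in L_0\cap L_1$, which is possible because $L_j^n=L_j\times_G 0_{EG_n}$ and $L_0,L_1\subset\mu_X^{-1}(0)$. By construction $(p_1,p_2)\in\mathcal{I}_n$ means the lift lies in $(G\times G)\cdot\bigl(\widehat{\mathcal{I}_n}\cap\Phi^{-1}(0)\bigr)=(G\times G)\cdot\bigl((\Delta_X\times N_{\Gamma(\iota_n)})\cap\Phi^{-1}(0)\bigr)$. Since the $G$-action preserves the zero covector, unpacking this produces $(g_1,g_2)\in G\times G$ with $g_1^{-1}x_1=g_2^{-1}x_2=:\tilde x$ and $g_2^{-1}q_2=\iota_n(g_1^{-1}q_1)$; setting $\tilde q:=g_1^{-1}q_1$ gives $(p_1,p_2)=\bigl([\tilde x,\tilde q],[\tilde x,\iota_n(\tilde q)]\bigr)$, which is the point $(\Id\times_G\iota_n)([\tilde x,\tilde q])$ on the graph. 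The reverse inclusion is immediate, so the set of constant quilts is $\Gamma(\Id\times_G\iota_n)$ as a set.

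Third, I would verify transversality at each constant solution, which upgrades the set-theoretic identification to a diffeomorphism. Following the splitting argument of Theorem \ref{TransversalityII}, the linearization $D_{\underline{u}}$ factorizes through $TX_n=V\oplus H$ into a fiber operator $D^{\operatorname{fiber}}_{\underline{u}}$ (a constant Floer strip in $X$ with diagonal seam and boundary $L_0,L_1$) and a base operator $D^{\operatorname{base}}_{\underline{u}}$ (a constant Floer quilt in $T^{\ast}BG_n\times T^{\ast}BG_{n+1}$ with seam $N_{\Gamma(\overline{\iota_n})}$ and zero-section boundary). Surjectivity of $D^{\operatorname{base}}_{\underline{u}}$ was already established in the index calculation of Theorem \ref{TransversalityII}, and surjectivity of $D^{\operatorname{fiber}}_{\underline{u}}$ at constants with cleanly intersecting Lagrangian boundary is precisely the content of \cite[Section 5]{Schmaschke}. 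On the component containing the graph, $\dim C_{-}=\dim(L_0\cap L_1)+\dim BG_n$ and $\dim C_{+}=\dim(L_0\cap L_1)+\dim BG_{n+1}$, so the dimension formula of Theorem \ref{TransversalityII} yields $\dim(L_0\cap L_1)+\dim BG_n=\dim\Gamma(\Id\times_G\iota_n)$, as required.

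The main obstacle is the third step: one must carefully invoke Schmäschke's regularity result for the fiber operator at a constant, since naïvely the Cauchy-Riemann operator on a constant strip has large kernel equal to $T_{p_1}(L_0\cap L_1)$. The fact that this equals the actual dimension and that the cokernel vanishes is exactly the clean-intersection phenomenon, and it is the only nontrivial input; the identification with the graph is pure book-keeping with the $G\times G$-action.
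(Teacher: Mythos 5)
Your proposal is correct and follows essentially the same route as the paper: identify the constant quilts with the graph of $\Id\times_G\iota_n$ (the paper asserts this directly, you supply the $G\times G$ bookkeeping) and then check regularity by matching the Fredholm index from the fiber/base splitting of Theorem \ref{TransversalityII} against the actual dimension of the graph. Your closing remark about the kernel at a constant strip being exactly $T_{p}(L_0\cap L_1)$ by cleanness is precisely the point the paper leaves implicit in the phrase ``transversely cut-out (i.e.\ of the expected dimension).''
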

\begin{proof}
  For a given connected component $C$ of $L_0\cap L_1$, a constant quilt in $C$ is just a pair $(p, \Id\times_G \iota_n(p))$, hence the space of constant quilts is diffeomorphic to the graph as stated. To see that it is transversely cut-out (i.e. of the expected dimension) we plug $C_{-}'=C_{+}'$ and $\operatorname{Mas}(\underline{u})=0$ in the above equation to get
  $$\ind D_{\underline{u}}= \dim C_{-}'+\dim BG_{n}=\dim C_{-}$$
  which is the dimension of the graph.
\end{proof}
The next step is to construct the pushforward between our Floer chain complexes. Once and for all, we pick a universal Morse datum for the manifolds $L_0^{n}\cap L_1^{n}$ as in subsection \ref{MorseData}. We denote the Morse functions by $f_n$ and the metrics by $g_n$.

Given two Floer complexes $FC_{\ast}(L_0^n,L_1^{n};\Lambda_0)$ and $FC_{\ast}(L_0^{n+1},L_1^{n+1};\Lambda_0)$ equipped with regular split almost complex structures $J_t$ and $J_t'$ and points $p\in \crit f_n, q\in \crit f_{n+1}$, a \emph{parametrized quilt cascade} with $m$ non-quilted jumps from $p$ to $q$ representing the homotopy class $A$ is a tuple: $$(u_1,\dots, u_{k}, \underline{u}, u_1',\dots, u_{l}')$$ with $k+l=m$ such that:
\begin{itemize}
\item Each $u_j$ is a \emph{nonconstant} $J_t$-holomorphic Floer strip in $X_n$, and each $u_{j}'$ is a \emph{nonconstant} $J_t'$-holomorphic Floer strip in $X_{n+1}$.
\item $\underline{u}$ is a $\mathcal{I}_n$-quilted strip as above, with perturbation data $(\underline{J},\underline{H})\in \operatorname{Per}(\mathcal{I}_n)$ where $\underline{J}$ is asymptotic to $J_t$ and $J_t'$ as $s\to\pm \infty$.
\item For $j\in \{1,\dots, k\}$ there exists $t_{j}\geq 0$ such that
  $$\begin{cases}
    \psi^{t_j}(u_j(+\infty))=u_{j+1}(-\infty), & j\neq k\\
    \psi^{t_k}(u_{k}(+\infty))=\underline{u}(-\infty).
  \end{cases}$$
  Here $\psi$ denotes the negative gradient flow with respect to the corresponding Morse data.
\item Similarly for $j'\in \{1,\dots, l-1\}$ there exist $t_j'\geq 0$ such that
  $$\begin{cases}
    \psi^{t_1'}(\underline{u}(+\infty))=u_1'(-\infty),\\
    \psi^{t_j'}(u_j'(+\infty))= u_{j+1}'(-\infty), & j'\neq 1.
  \end{cases}$$
\item $u_1(-\infty)\in W^{u}(p)$ and $u_{l'}\in W^{s}(q)$.
\item The $u_j$ and $u_j'$ each form a \emph{distinct tuple} (i.e. no one is a reparametrization of another).
\item The concatenation represents the homotopy class $A$.
\end{itemize}
The set of such quilt-cascades carries a natural $\R^{m}$ action by time-shift on each non-quilted component, and we denote the quotient by
$$\mathcal{QC}_{m}(p,q,A;\mathcal{I}_n).$$
Finally set:
$$\mathcal{QC}(p,q,A;\mathcal{I}_n)=\bigcup_{m\in \mathbb{N}} \mathcal{QC}_{m}(p,q,A;\mathcal{I}_n).$$
We picture quilt-cascades as follows:
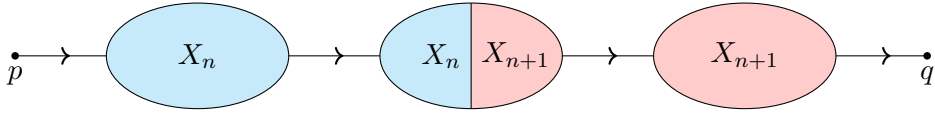
\begin{figure}[h]
  \centering
    \begin{tikzpicture}[xscale=1.2]
    \path[decorate,decoration={
      markings,
      mark=between positions 0.06 and 1 step 0.3 with {\arrow{>[scale=1.3,line width=.8pt]};},
    }] (0,0) to (10,0);
    \draw (0,0) node[below] {$p$} node[fill, inner sep=1pt, circle] {}-- (10,0)node[fill, inner sep=1pt, circle] {}node[below]{$q$};
    \path[fill=cyan!20!white] (2,0) circle (1 and 0.7) (5,0.7) arc (90:270:1 and 0.7)--cycle;
    \path[fill=red!20!white] (8,0) circle (1 and 0.7) (5,0.7) arc (90:-90:1 and 0.7)--cycle;
    \draw (5,0.7)--(5,-0.7);
    \draw (2,0) circle (1 and 0.7) node{$X_{n}$} (5,0) circle (1 and 0.7) node[right]{$X_{n+1}$}node[left]{$X_{n}$} (8,0) circle (1 and 0.7) node{$X_{n+1}$};
  \end{tikzpicture}
  \caption{A quilt cascade from $p$ to $q$ with two non-quilted jumps and one quilted jump.}
  \label{fig:quilt-cascades}
\end{figure}
\begin{theorem}
  If the almost complex structures are regular for each Floer complex and $H$ is regular for the space of quilted strips, then $\mathcal{QC}(p,q,A;\mathcal{I}_n)$ is a smooth manifold with corners. The zero-dimensional component of this manifold is compact, hence a finite set, and the $1$-dimensional component admits a compactification whose boundary is the disjoint union of:
  $$\bigsqcup_{\stackrel{z}{B+C=A}}\mathcal{M}_{[0]}(p,z,B)\times \mathcal{QC}_{[0]}(z,q,C;\mathcal{I}_n),$$ and: $$\bigsqcup_{\stackrel{z}{B+C=A}}\mathcal{QC}_{[0]}(p,z,B;\mathcal{I}_n)\times \mathcal{M}_{[0]}(z,q,C).$$
\end{theorem}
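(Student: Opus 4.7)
The plan is to mirror the proof of Theorem \ref{TransversalityI}, now incorporating the quilted strip $\underline{u}$ in the middle and invoking Theorem \ref{TransversalityII} to control it. I would first realize $\mathcal{QC}_m(p,q,A;\mathcal{I}_n)$ as a fiber product: fixing a decomposition $m = k+l$ and tuples of connected components of $L_0^n\cap L_1^n$ and $L_0^{n+1}\cap L_1^{n+1}$ for the asymptotic ends of the strips and the quilt, the moduli space is the pullback of $\mathcal{M}'_k \times \mathcal{Q}(\mathcal{I}_n, C_-, C_+) \times \mathcal{M}'_l$ under the evaluation maps against the gradient-flow matching map $\Psi$, parametrized by nonnegative reals $(t_1, \dots, t_k, t_1', \dots, t_{l-1}')$ and the stable/unstable manifolds of $p$ and $q$. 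The universal Morse data from subsection \ref{MorseData}, together with a choice of split almost complex structures and Hamiltonian perturbations in the common comeagre set obtained by intersecting the regular loci from Theorems \ref{TransversalityI} and \ref{TransversalityII} (a countable intersection inside a product Baire space), simultaneously ensures that every $\mathcal{M}'$ and $\mathcal{Q}(\mathcal{I}_n, C_-, C_+)$ is a smooth manifold with submersive evaluation maps, and that every fiber product involved is transverse. The dimension follows by summing the formulas of the two earlier theorems against the evaluation constraints; the shift $-\tfrac{1}{2}(\dim BG_{n+1} - \dim BG_n)$ from Theorem \ref{TransversalityII} matches the index jump expected for a pushforward in the Morse-theoretic sense of subsection \ref{PushforwardSection}.

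Next I would handle compactness using Gromov--Floer compactness in both the cascade and the quilted settings. A sequence in $\mathcal{QC}(p,q,A;\mathcal{I}_n)$ with fixed $A$ has uniformly bounded energy, so after passing to a subsequence it degenerates in one of the following ways: (a) an interior or boundary bubble on some $u_j$, $u_j'$ or on $\underline{u}$; (b) breaking of a non-quilted strip into two strips; (c) a gradient line between consecutive components shrinking to a point; (d) a gradient line breaking; (e) a Floer strip bubbling off the quilt $\underline{u}$ at $s \to \pm\infty$. Relative exactness of $L_0, L_1$ lifts to $L_0^n, L_1^n$ and $L_0^{n+1}, L_1^{n+1}$ by the earlier proposition, and the analogous argument for $\mathcal{I}_n$ inside $(X_n \times \overline{X_{n+1}}, \omega_n \ominus \omega_{n+1})$ rules out all bubbles in (a). Hypothesis \eqref{TH} lifts to both $X_n$ and $X_{n+1}$, justifying restriction to distinct tuples. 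In the zero-dimensional stratum all of (b)--(e) have strictly smaller virtual dimension and are excluded by transversality, so $\mathcal{QC}_{[0]}(p,q,A;\mathcal{I}_n)$ is compact and hence finite.

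For the one-dimensional stratum, exactly one of (b)--(e) occurs. Following the standard cancellation used in the proof of Theorem \ref{TransversalityI}, the codimension-one faces given by case (c) match those coming from configurations with one extra strip whose adjacent gradient line has zero length; analogously, case (e) — a non-quilted strip bubbling off at $\pm \infty$ from $\underline{u}$ — matches the zero-length-gradient corner of $\mathcal{QC}_{m+1}$, so upon gluing $\bigcup_m \mathcal{QC}_m$ these become interior faces. The only remaining codimension-one strata are those of case (d), where either an incoming gradient line on the $p$-side or an outgoing gradient line on the $q$-side breaks: these give exactly the two claimed disjoint unions. The main obstacle will be the careful accounting of these cancellations, in particular verifying that a ``quilt-end'' strip bubbling in case (e) pairs correctly with the $t_k = 0$ (resp.\ $t_1' = 0$) corner of the adjacent stratum — this requires a gluing argument analogous to Schmäschke's, but applied to the hybrid configuration where one side of the glued piece is a Floer strip in $X_n$ (resp.\ $X_{n+1}$) and the other is the quilt $\underline{u}$; once this gluing is in place, the identification of the compactified boundary with the two stated disjoint unions is routine.
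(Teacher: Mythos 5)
Your overall strategy coincides with the paper's: transversality via the fiber-product description of Theorem \ref{TransversalityI} combined with Theorem \ref{TransversalityII}, Gromov compactness, exclusion of the zero- and one-dimensional bad strata by dimension counting, and the pairing of strip breaking at the incoming/outgoing ends with the zero-length-gradient-line corners so that only gradient-line breaking survives as true boundary. That pairing, and the identification of the surviving boundary with the two stated disjoint unions, is exactly how the paper concludes.

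There is, however, one omission in your enumeration of degenerations of the quilted jump. The quilted surface $\underline{Z}=\Theta\setminus\{0,i\}$ has two \emph{free} strip-like ends at the points $0$ and $i$ where the seam meets the boundary, and a sequence of quilts can lose energy there by a quilted strip (with both patches and the seam condition $\mathcal{I}_n$) breaking off at a free end; your cases (a)--(e) do not cover this, since the broken-off object is neither a bubble nor a Floer strip in a single $X_m$. Likewise, bubbling \emph{at the seam} produces a disc with boundary on the Lagrangian correspondence $\mathcal{I}_n\subset X_n\times\overline{X_{n+1}}$, and relative exactness of the $L_j^n$ does not by itself kill it --- one needs the lifting/zero-area argument of \cite[Proposition 4.36]{Cazassus} (the same trick the paper spells out later for $\Pi_n$-quilts: lift along the fibration, or follow the seam to a point of the Lagrangian product and open up the bubble, then conclude the piece has zero area and is constant). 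You gesture at this with ``the analogous argument for $\mathcal{I}_n$,'' which is the right idea for the seam bubble, but the free-end breaking must be listed and excluded by the same mechanism; otherwise the compactness claim for the zero-dimensional component and the boundary description of the one-dimensional component are not yet justified. With these two cases added and disposed of, your argument matches the paper's proof.
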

\begin{proof}
  The transversality part is clear: indeed using theorem \ref{TransversalityII} and using a fiber product description as in theorem \ref{TransversalityI} we readily get the result.

  Turning now to compactness, there is another possible source of lack of compactness corresponding to the quilted jump $\underline{u}$. Indeed, by Gromov-Compactness for quilts (see, e.g., \cite[Lemma 6]{Lekili-Lipyanskiy}) a sequence of quilts $\underline{u}$ may degenerate by:
  \begin{enumerate}
  \item Bubbling off at a true boundary point or at a non-seam interior point,
  \item strip breaking at the free ends,
  \item bubbling at a seam point (i.e. a quilted bubble),
  \item breaking at an incoming or outgoing end.
  \end{enumerate}
  The first case cannot occur because of our hypothesis \ref{RE}. The second and third cases are ruled out as explained in \cite[Proposition 4.36]{Cazassus}. It follows that only the fourth case can occur.

  For transversality reasons, the zero-dimensional component has to be compact, as in the proof of theorem \ref{TransversalityI}. Analogously to the proof of that theorem, the one-dimensional component can only lose compactness if one of the gradient line shrinks to zero, a strip (quilted or non quilted) breaks at an incoming or outgoing end, or gradient lines break. However, as explained in that proof, the shrinking of the gradient lines and the breaking at an incoming or outgoing end come in pairs, so we glue the compactifications together along these common ends. Finally, we can only have gradient line breaking, which amounts to the described boundary.
\end{proof}
We refer to elements of the boundary of the 1-dimensional compactification as \emph{broken quilt-cascades}.

We would be tempted to now define a pushforward between the Floer complexes by counting the zero-dimensional component of the moduli space of quilt-cascades. There is a problem: a quilted strip can have negative symplectic area, and so would yield an element in $\Lambda$ and not $\Lambda_0$ (which we need for the proof of the main theorem). We work around this issue using the following proposition:
\begin{prop}
  There exists an open neighborhood $U$ of $0$ in the space $\mathcal{H}$ of Hamiltonian perturbations such that for any $H\in U$, all quilted strips $\underline{u}\in \mathcal{Q}(\mathcal{I}_n,H)$ satisfy $\omega(\underline{u})\geq 0$ and
  $$\omega(\underline{u})=0 \Leftrightarrow [\underline{u}]=0$$
  where $[\underline{u}]$ is the homotopy class of $\underline{u}$.
\end{prop}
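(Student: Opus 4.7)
The plan is to treat the $H=0$ case exactly and then extend to small $H$ by a Gromov compactness argument.

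For $H=0$, I would use the splitness of $\underline{J}$ to see that $\pi_2$ is pseudoholomorphic, so for any $\underline{u}=(u_1,u_2)\in\mathcal{Q}(\mathcal{I}_n,0)$, the projection $\pi_2(\underline{u})$ is a pseudoholomorphic quilted strip in $T^{\ast}BG_n\times T^{\ast}BG_{n+1}$ with seam $N_{\Gamma(\overline{\iota_n})}$ and true boundaries on the zero-sections. All the Lagrangians involved being exact, $\pi_2(\underline{u})$ has zero symplectic area and is therefore constant, so $\underline{u}$ lies in a single fiber of $\pi_2\times\pi_2$, identified with $X\times X$. In that fiber the seam $\mathcal{I}_n$ restricts to the diagonal $\Delta_X$ and the boundary decorations to $L_0,L_1$; folding along the seam as in the proof of Theorem~\ref{TransversalityII} turns $\underline{u}$ into an honest $J$-holomorphic Floer strip $u\colon \Theta\to X$ with boundary in $(L_0,L_1)$. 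For such a strip $\omega(u)=E(u)\geq 0$ with equality iff $u$ is constant, and the folding preserves both symplectic area and homotopy class, yielding the claim at $H=0$.

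To extend the claim to a neighborhood of $0$, I would argue by contradiction. If no such neighborhood exists, pick a sequence $H_\nu\to 0$ in $\mathcal{H}$ and $\underline{u}_\nu\in\mathcal{Q}(\mathcal{I}_n,H_\nu)$ satisfying either (a) $\omega(\underline{u}_\nu)<0$, or (b) $\omega(\underline{u}_\nu)=0$ and $[\underline{u}_\nu]\neq 0$; in both cases $\omega(\underline{u}_\nu)\leq 0$. The standard energy--area identity for Hamiltonian-perturbed quilts gives $0\leq E(\underline{u}_\nu)\leq \omega(\underline{u}_\nu)+c(H_\nu)$ where $c(H_\nu)\to 0$, so $E(\underline{u}_\nu)\to 0$. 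Gromov compactness for quilts with converging perturbation data then produces, after extraction, a $C^{\infty}_{\operatorname{loc}}$-limit $\underline{u}_\infty\in\mathcal{Q}(\mathcal{I}_n,0)$. Relative exactness~\eqref{RE}, together with the analogous exactness of the correspondence $\mathcal{I}_n$, rules out every bubbling scenario from the quilted Gromov compactness theorem: any sphere, disc, or seam bubble would carry a uniform positive amount of energy by Proposition~\ref{Gap1} and its quilted analogue, incompatible with $E(\underline{u}_\nu)\to 0$. Therefore $E(\underline{u}_\infty)=0$, so the limit is constant and has trivial homotopy class.

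Because the convergence is $C^{\infty}_{\operatorname{loc}}$ on $\Theta$ together with matching exponential convergence at the strip-like ends, the homotopy class of $\underline{u}_\nu$ eventually equals that of $\underline{u}_\infty$, which is trivial. Hence $[\underline{u}_\nu]=0$ and $\omega(\underline{u}_\nu)=0$ for $\nu$ large, contradicting both (a) and (b). The main difficulty I anticipate is the no-bubbling step combined with the stabilization of homotopy classes under a zero-energy limit; both rest crucially on the relative exactness hypotheses for all the Lagrangian data, including the correspondence $\mathcal{I}_n$ which inherits exactness from its construction via Marsden--Weinstein reduction of exact Lagrangians.
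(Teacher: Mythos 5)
Your proposal is correct and follows essentially the same route as the paper: argue by contradiction, apply Gromov compactness for quilts to a sequence $\underline{u}_\nu$ with $H_\nu\to 0$, conclude the limit is constant, and derive a contradiction from the eventual triviality of the homotopy classes. The explicit $H=0$ analysis via the splitting and the projection $\pi_2$ is not in the paper's proof of this proposition but appears elsewhere (in the gapping discussion), and your more careful handling of the energy--area identity and of why bubbling cannot occur only strengthens the argument.
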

\begin{proof}
  We use Gromov compactness. If the proposition were false, there would exist a sequence of Hamiltonian perturbations $H_{\nu}\to 0$ and a sequence of quilted $H_{\nu}$-perturbed strips $\underline{u}_{\nu}$ that satisfy $\omega(\underline{u}_{\nu})<0$ but $[\underline{u}_{\nu}]\neq 0$. By passing to a subsequence, we may assume that $\underline{u}_{\nu}$ converges to some $\underline{u}$. However, this unperturbed quilted strip satisfies $\omega([\underline{u}])=E(\underline{u})=0$, hence is constant. But since $\underline{u}_{\nu}$ converges to $\underline{u}$, at some point the homotopy classes must be trivial, which is a contradiction.
\end{proof}
Now, since the set of regular Hamiltonian perturbations is of the second Baire category, then it is of the second Baire category when intersected with $U$. Since any open subspace of a completely metrizable space is completely metrizable, this subspace is also Baire. We will immediately shrink $U$ again to satisfy the following lemma:
\begin{lema}
  There exists an open neighborhood $V$ of $0$ in the space of Hamiltonian perturbations such that for all $H\in V$ and any critical points $p\in \crit f_n, q\in \crit f_{n+1}$ with $\ind_{f_n}(p)-\ind_{f_{n+1}}(q)=0$, the moduli space of broken quilt-cascades from $p$ to $q$ that represent the trivial homotopy class is empty.
\end{lema}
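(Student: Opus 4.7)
The plan is to argue by contradiction, exploiting Gromov compactness together with the gap property of Floer strips. Suppose the conclusion fails: then there is a sequence $H_\nu\to 0$ inside the neighbourhood $U$ of the preceding proposition, together with broken quilt-cascades $\xi_\nu$ from $p$ to $q$ in the trivial total homotopy class, where $\ind_{f_n}(p)=\ind_{f_{n+1}}(q)$. Broken quilt-cascades come in two symmetric types, so after passing to a subsequence we may assume they all have the form $\xi_\nu=(\gamma_\nu,\underline{u}_\nu)$ with $\gamma_\nu\in\mathcal{M}_{[0]}(p,z_\nu,B_\nu)$ and $\underline{u}_\nu\in\mathcal{QC}_{[0]}(z_\nu,q,C_\nu;\mathcal{I}_n,H_\nu)$, where $B_\nu+C_\nu=0$.

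First I would exploit area positivity: by Proposition~\ref{Gap1} every non-constant Floer strip appearing in $\gamma_\nu$ contributes strictly positive area, and by the preceding proposition the quilted strip inside $\underline{u}_\nu$ has non-negative area, using $H_\nu\in U$. The trivial total class then forces both $B_\nu=0$ and $C_\nu=0$, so in particular $\gamma_\nu$ is an honest Morse flow line of $f_n$; its $0$-dimensionality gives $\ind_{f_n}(p)-\ind_{f_n}(z_\nu)=1$. Extracting a further subsequence we obtain a fixed $z_\nu=z\in\crit f_n$ with $\ind_{f_n}(z)=\ind_{f_{n+1}}(q)-1$.

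Next I would apply Gromov compactness for quilts to the second factor. Since $\omega(\underline{u}_\nu)=0$ uniformly, a subsequence converges to a (possibly further broken) cascade $\underline{u}_\infty$ at $H=0$ from $z$ to $q$ in the trivial class. Invoking Proposition~\ref{Gap1} once more and the corollary characterising constant $\mathcal{I}_n$-quilts, $\underline{u}_\infty$ cannot contain any non-constant Floer strip or any non-constant quilted strip; it must therefore be a (possibly broken) grafted Morse line from $z$ to $q$ associated to $\Id\times_G\iota_n$, i.e., a point in the compactification of $\mathcal{MG}_{\Id\times_G\iota_n}(z,q)$.

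The main obstacle, which the final step addresses, is to rule out this limit using the genericity of the universal Morse datum fixed once and for all. By that choice, $\mathcal{MG}_{\Id\times_G\iota_n}(z,q)$ is transversely cut out of dimension $\ind_{f_n}(z)-\ind_{f_{n+1}}(q)=-1$, hence empty; each deeper boundary stratum of its compactification introduces extra Morse breakings, each of which strictly lowers the virtual dimension, so those strata are empty as well. This yields the sought contradiction and proves the existence of $V$. The symmetric Type 2 case is treated identically, replacing $z$ by a critical point $w\in\crit f_{n+1}$ with $\ind_{f_{n+1}}(w)=\ind_{f_{n+1}}(q)+1$.
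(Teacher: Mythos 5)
Your argument is correct and is essentially the paper's own proof, spelled out in more detail: both proceed by contradiction, use Gromov compactness with $H_\nu\to 0$ to show a sequence of trivial-class broken quilt-cascades would converge to a (possibly further broken) grafted Morse line, and then kill that limit by the transversality of the universal Morse data together with the dimension count $\ind_{f_n}(z)-\ind_{f_{n+1}}(q)=-1$. The extra care you take with area positivity (to force all strips constant) and with the two symmetric breaking types is exactly what the paper's phrase ``by transversality reasons (and a dimension count)'' is compressing.
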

\begin{proof}
  If the proposition was false we would have a sequence of broken quilt-cascades converging to a broken grafted Morse line, however by transversality reasons (and a dimension count) this space is empty.
\end{proof}

We will restrict our domain of Hamiltonian perturbations to the connected component of $0$ in $U\cap V$, which we denote by $W$. Again, this is harmless as the space of regular Hamiltonian perturbations is still of the second Baire category in this space. We denote the space of perturbations in this open subset as $\operatorname{Ham}_0(X_n,X_{n+1})$.

Define the \emph{increment maps} as the $\Lambda_0$-linear extension of the maps:
\begin{eqnarray*}
  \alpha_n: FC_{\ast}(L_0^{n},L_1^{n};\Lambda_0)&\to & FC_{\ast}(L_0^{n+1},L_1^{n+1})\\
  p & \mapsto & \sum \#_2 \mathcal{QC}_{[0]}(p,q,A) T^{\omega(A)}\cdot q.
\end{eqnarray*}
The compactification of the 1-dimensional space $\mathcal{QC}_{[1]}(p,q,A)$ shows that these are in fact chain maps.
\begin{prop}\label{ZeroEnergyPart}
  The zero energy part of the increment map coincides with the Morse pushforward
  $$\Id\times_G \iota_{n_{\ast}}\otimes \Id: MC_{\ast}(L_0^{n}\cap L_1^{n})\otimes \Lambda_0\to MC_{\ast}(L_0^{n+1}\cap L_1^{n+1})\otimes \Lambda_0.$$
\end{prop}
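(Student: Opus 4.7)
The plan is to argue that a quilt-cascade contributing to the $T^{0}$ coefficient of $\alpha_{n}(p)$ must reduce to a single constant quilted strip with attached Morse flow lines on both ends, and this reduction matches the defining data of a grafted Morse line.

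First, I would isolate the zero-energy contribution: the $T^{0}$ coefficient of $\alpha_{n}(p)$ is obtained by counting those $\mathcal{QC}_{[0]}(p,q,A)$ with $\omega(A)=0$. By relative exactness we have $\omega(A)=E(\underline{u})+\sum E(u_{i})+\sum E(u_{j}')$, so all energies must vanish individually. Any Floer strip of zero energy is constant, but the definition of a parametrized quilt-cascade requires each non-quilted $u_{i}$ and $u_{j}'$ to be nonconstant; hence $k=l=0$, i.e., the cascade contains only the quilted component $\underline{u}$. Using the proposition preceding the definition of the increment map (valid since we restricted Hamiltonian perturbations to $\operatorname{Ham}_{0}(X_{n},X_{n+1})$), the equality $\omega(\underline{u})=0$ forces $[\underline{u}]=0$, so by energy-equals-action $\underline{u}$ is itself constant.

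Next, I would apply the corollary following Theorem~\ref{TransversalityII}: the space of constant $\mathcal{I}_{n}$-quilts is canonically diffeomorphic to the graph $\Gamma(\Id\times_{G}\iota_{n})\subset (L_{0}^{n}\cap L_{1}^{n})\times(L_{0}^{n+1}\cap L_{1}^{n+1})$ and is transversely cut-out. A contributing quilt-cascade is therefore determined by the point $x\in L_{0}^{n}\cap L_{1}^{n}$ to which $\underline{u}$ is constant on the left patch, subject to $x\in W^{u}(p)$ and $\Id\times_{G}\iota_{n}(x)\in W^{s}(q)$. By the fiber-product description in Section~\ref{PushforwardSection}, this is exactly the defining condition for an element of $\mathcal{MG}_{\Id\times_{G}\iota_{n}}(p,q)$.

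Finally, I would verify that the counts match: the zero-dimensional components on both sides are finite, and transversality compatibility is ensured by the universal Morse datum chosen in Section~\ref{MorseData} together with the submersivity of the evaluation maps from Theorem~\ref{TransversalityII}. Thus
\[
\#_{2}\,\mathcal{QC}_{[0]}(p,q,0)\;=\;\#_{2}\,\mathcal{MG}_{\Id\times_{G}\iota_{n}}(p,q),
\]
and, after $\Lambda_{0}$-linear extension, the $T^{0}$ part of $\alpha_{n}$ coincides with $\Id\times_{G}\iota_{n_{*}}\otimes\Id$.

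The main subtlety I anticipate is the transversality compatibility in the last step: one must ensure that the regular perturbation data used to cut out $\mathcal{QC}(p,q,A;\mathcal{I}_{n})$ are compatible with the regularity conditions on the metrics $g_{n},g_{n+1}$ that define the Morse pushforward, so that the zero-energy stratum of $\mathcal{QC}$ is literally identified with $\mathcal{MG}_{\Id\times_{G}\iota_{n}}$ as transversely cut-out manifolds rather than merely set-theoretically. Provided the neighborhood $W$ of $0$ in the space of Hamiltonian perturbations is small enough that no extra degenerate trivial-class configurations appear (which is the content of the lemma preceding the definition of $\alpha_{n}$), this identification is clean and the equality of counts is immediate.
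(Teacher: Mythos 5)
Your reduction of the zero-energy count to configurations with no non-quilted jumps is correct and matches the paper: since the $u_i,u_j'$ are unperturbed, zero area forces them to be constant, which is excluded by definition. The gap is in the next step, where you claim that $[\underline{u}]=0$ together with ``energy-equals-area'' forces the quilted strip itself to be constant. The increment map is defined with a \emph{nonzero} generic Hamiltonian perturbation $\underline{H}\in W$ -- this is unavoidable, because Theorem~\ref{TransversalityII} achieves transversality for the quilted moduli spaces by varying $\underline{H}$ with $\underline{J}$ fixed. For the perturbed equation $(du-X_K)^{0,1}=0$ the energy is $\tfrac12\int|du-X_K|^2$, which is \emph{not} the topological area: a homotopically trivial $\underline{H}$-perturbed quilt can have positive energy and need not be constant (indeed, a constant map is generally not even a solution where $X_K\neq 0$). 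Relatedly, the corollary identifying the space of constant quilts with $\Gamma(\Id\times_G\iota_n)$ is stated only for $\underline{H}=0$, so you cannot invoke it for the moduli space that actually defines $\alpha_n$.

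The paper closes this gap with a cobordism (continuation) argument in the style of Biran--Cornea: choose a generic path $H_t$ in $\operatorname{Ham}_0(X_n,X_{n+1})$ from the regular $H$ to $0$ and form the parametrized moduli space $\mathcal{C}=\{(Q,t)\mid Q\in\mathcal{QC}_{[0]}(p,q,0;\mathcal{I}_n,H_t)\}$. At $t=1$ one recovers exactly the grafted Morse lines $\mathcal{MG}_{\Id\times_G\iota_n}(p,q)$ of Section~\ref{PushforwardSection}; at $t=0$ one recovers the zero-energy part of $\alpha_n$. Compactness of this one-dimensional cobordism (no strip breaking since the class is trivial, no gradient-line breaking by the choice of $V$) yields equality of the two mod-$2$ counts. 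Your final paragraph correctly senses that some compatibility of perturbation data is needed, but locates the difficulty in the Morse metrics rather than in the Hamiltonian term; without the interpolation in $\underline{H}$ the identification of the two moduli spaces is not even set-theoretic, so the ``immediate'' equality of counts does not hold as written.
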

\begin{proof}
  We will use a cobordism argument analogous to the one in \cite[Proposition 5.3.16]{Biran-Cornea}. By the previous proposition, we know that the zero energy part is precisely when $A=0$. Now starting from the regular $H\in W$, pick a generic path $H_{t}$ in $\operatorname{Ham}_0(X_n,X_{n+1})$ such that $H_{0}=H$ and $H_{1}=0$. Given $p,q$ such that $\ind_{f_n}(p)-\ind_{f_{n+1}}(q)=0$, form the moduli space

  $$\mathcal{C}= \{(Q,t)\mid Q\in \mathcal{QC}_{[0]}(p,q,0;\mathcal{I}_n, H_{t}), t\in [0,1]\}.$$

  Note that since $A=0$, we cannot have any non-quilted jumps, as they would have zero energy and thus be constant (remember that the non-quilted jumps are \emph{unperturbed} so their energy and area coincide). Furthermore, $\mathcal{QC}_{[0]}(p,q,0;\mathcal{I}_n, 0)$ coincides with the moduli space of grafted Morse lines defined in section \ref{PushforwardSection}. The space $\mathcal{C}$ is a smooth manifold whose boundary is given by
  \begin{align*}
    \partial \mathcal{C}&= \mathcal{QC}_{[0]}(p,q,0;\mathcal{I}_n, H)\bigsqcup \mathcal{QC}_{[0]}(p,q,0;\mathcal{I}_n, 0)\\
                        &= \mathcal{QC}_{[0]}(p,q,0;\mathcal{I}_n, H)\bigsqcup\mathcal{MG}_{\Id\times_{G}\iota_n}(p,q).
  \end{align*}
  We claim that $\mathcal{C}$ is compact. Indeed, the only possible loss of compactness would come from gradient line breaking (we can't have strip-breaking because the homotopy class is trivial, so the resulting strips would be constant), but by the previous lemma and our choice of $\operatorname{Ham}_0(X_n,X_{n+1})$ this is impossible. Thus:
  $$\#_2\mathcal{QC}_{[0]}(p,q,0;\mathcal{I}_n, H)= \#_2\mathcal{MG}_{\Id\times_{G}\iota_n}(p,q)$$
  and the proposition is proved.
\end{proof}
Note that we only need to choose the neighborhood $\operatorname{Ham}_0(X_n,X_{n+1})$ for a \emph{given} pair of Floer complexes, so we don't need any sort of `universal choice'.

We define the \emph{equivariant Floer chain complex}, denoted $FC^{G}_{\ast}(L_0,L_1;\Lambda_0)$ as the telescope of the $FC_{\ast}(L_0^{n},L_1^{n};\Lambda_0)$ with the maps $\alpha_n$. In symbols
$$FC_{\ast}^{G}(L_0,L_1;\Lambda_0):= \operatorname{Tel}\left( FC_{\ast}(L_0^{n},L_1^{n};\Lambda_0),\alpha_n \right).$$
We define the \emph{equivariant Floer homology} of $L_0$ and $L_1$ over $\Lambda_0$ as the homology of this chain complex, and we denote it by $FH^{G}_{\ast}(L_0,L_1;\Lambda_0)$.
\begin{obs} We immediately observe the following:\begin{itemize}
  \item The zero energy part of $FC_{\ast}^{G}(L_0,L_1;\Lambda_0)$ coincides the equivariant Morse complex of $L_0\cap L_1$.
  \item The resulting homology is independent of auxiliary choices of: universal Morse data, choice of almost complex structures and choice of Hamiltonian perturbations (as long as they are small enough).
  \item $FC_{\ast}(L,L;\Lambda_0)\cong MC^{G}_{\ast}(L)\hat{\otimes}_{\Z_2}\Lambda_0.$
  \end{itemize}.
\end{obs}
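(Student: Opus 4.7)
All three observations are ``immediate'' consequences of the results already established, and the plan is to spell out why. The essential tools are Proposition~\ref{ZeroEnergyPart}, the gappedness of the Floer differentials and increment maps (Propositions~\ref{Gap1}, \ref{Gap2}), and the invariance criterion of Proposition~\ref{Prop1}.

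For the first bullet, the zero-energy quotient $C\mapsto \overline{C}=C/\Lambda_0^{+}C$ commutes with direct sums and hence with the telescope construction. By Propositions~\ref{Gap1}--\ref{Gap2}, the zero-energy part of the Floer differential on $FC_{\ast}(L_0^n,L_1^n;\Lambda_0)$ is the Morse differential on $MC_{\ast}(L_0^n\cap L_1^n,f_n)$ (as recorded in the text before Proposition~\ref{Gap2}), and by Proposition~\ref{ZeroEnergyPart} the zero-energy part of each increment map $\alpha_n$ is the Morse pushforward $\Id\times_G\iota_{n_\ast}$. Taking the zero-energy quotient through the telescope therefore yields $\overline{\Tel}(MC_{\ast}(L_0^n\cap L_1^n),\Id\times_G\iota_{n_\ast})$, which is by definition $MC_{\ast}^G(L_0\cap L_1)$.

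For the second bullet, I would construct comparison maps between the equivariant Floer complexes associated with two different choices of auxiliary data by a continuation-type argument: for each $n$, pick a generic path in the space of Morse data, split almost complex structures, and Hamiltonian perturbations, and count parametrized quilted cascades with varying data, exactly as in the construction of the increment maps. The Gromov-compactness arguments underlying Propositions~\ref{Gap1}--\ref{Gap2} then show that these continuation maps are gapped. By the first bullet, the induced map on the zero-energy part is a continuation map between equivariant Morse complexes, which is a quasi-isomorphism by the classical invariance of Morse homology under the choice of universal Morse data (Subsection~\ref{MorseData}). Proposition~\ref{Prop1} promotes this to a quasi-isomorphism of the full equivariant Floer complexes, giving the desired independence.

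For the third bullet, with $L_0=L_1=L$ the plan is to arrange data so that the identification holds on the chain level. Choose a time-independent regular split almost complex structure on each $X_n$; the standard argument behind $FH_{\ast}(L,L;\Lambda_0)\cong MH_{\ast}(L)\otimes\Lambda_0$ (the third bulleted property listed after Proposition~\ref{Gap2}) then yields a strict equality $FC_{\ast}(L^n,L^n;\Lambda_0)=MC_{\ast}(L^n)\otimes\Lambda_0$ at the chain level. Shrinking the Hamiltonian perturbations in $\operatorname{Ham}_0(X_n,X_{n+1})$ sufficiently and applying a cobordism argument identical to the one in Proposition~\ref{ZeroEnergyPart} \emph{but tracked at all energies}, one forces every quilted strip contributing to $\alpha_n$ to be constant, so that $\alpha_n$ reduces to $\Id\times_G\iota_{n_\ast}$. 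The completed telescope of this data is exactly $MC^G_{\ast}(L)\hat{\otimes}_{\Z_2}\Lambda_0$. The main obstacle here is the last step, i.e.\ controlling all higher-energy quilted contributions when $L_0=L_1$; if a direct chain-level identification turns out to be out of reach, the isomorphism should instead be deduced on homology by combining bullets~1 and~2 with the Cartan-type identification $MH^G_{\ast}(L)\cong MH_{\ast}(L//G)$ available in the free-action case.
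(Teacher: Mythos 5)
The paper states this remark without proof, so there is no official argument to compare against; your write-up is a reasonable fleshing-out of why the observations are ``immediate,'' and the first two bullets are handled essentially as the paper intends (zero-energy quotient commutes with the telescope; continuation maps plus gappedness plus Proposition~\ref{Prop1}). Two caveats. For the second bullet, to get a map of telescopes you must also arrange that the continuation maps commute with the increment maps $\alpha_n$ up to \emph{gapped} homotopy --- this is the same parametrized-quilt argument the paper runs in Section~4.4 for the $\mathfrak{P}_n$, and your sketch should at least name it, since without it you only get maps of the individual $FC_{\ast}(L_0^n,L_1^n)$ and not of $FC_{\ast}^{G}$. For the third bullet your primary plan is more complicated than necessary, and your fallback is not available: the Cartan identification $MH^{G}_{\ast}(L)\cong MH_{\ast}(L/G)$ requires the $G$-action to be free, which is not assumed at this point of the paper (freeness is only imposed in Section~4). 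The direct route is that when $L_0=L_1=L$, any Floer strip closes up to a disc in $\pi_2(X_n,L^n)$, so by relative exactness every homotopy class contributing to the differential and (for perturbations in the neighbourhood $U$ of the proposition preceding Proposition~\ref{ZeroEnergyPart}) to $\alpha_n$ has zero area; hence the complex carries no positive powers of $T$ at all and equals its zero-energy part completed-tensored with $\Lambda_0$, so bullet~1 finishes the job without any ``all-energies'' cobordism.
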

\subsection{Gapping}\label{Gapping}
Recall that we denote by $\operatorname{Ham}_0 (X_n, X_{n+1})$ the space of Hamiltonian perturbations that are `small enough' (in the sense of the previous discussion). We aim to show in this section that for a suitable choice of perturbation data, the equivariant Floer complex is gapped. By definition of the telescope differential, we need to find data such that each Floer complex $FC_{\ast}(L_0^{n},L_1^{n};\Lambda_0)$ and each increment morphism $\alpha_n$ is $\hbar$-gapped where $\hbar$ is independent of $n$.

We start by showing that the differentials are uniformly gapped. Fix a $G$-invariant almost complex structure $J_{G}$ on $X$. According to proposition \ref{Gap1}, there exists a constant $\hbar>0$ such that any non-constant $J_{G}$-holomorphic strip $u$ on $X$ with boundaries on $L_0$ and $L_1$ satisfies $E(u)=\omega(u)\geq \hbar >0$. Now fixing $J_{G}\oplus \pi_2^{\ast}J_{T^{\ast}BG_n}$, any non-constant holomorphic strip $u'$ necessarily satisfies $E(u')=\omega(u')\geq \hbar$. Indeed, any such $u'$ is contained in some fiber, symplectomorphic to $X$ and with almost complex structure $J_G$. By virtue of proposition \ref{Gap2}, given $\varepsilon$ with $0<\varepsilon<\hbar$, there exists an open neighbourhood of this reference almost complex structure such that any non-constant pseudoholomorphic strip has energy at least $\hbar-\varepsilon$. We denote this open neighbourhood as $U_{\varepsilon,n}\subset \mathcal{C}^{\infty}([0,1],\mathcal{J}_{\operatorname{adm}}^{\operatorname{split}}(X_n))$. Thus for any such $\varepsilon$, we can find generic almost complex structures such that all the Floer complexes are $\hbar-\varepsilon$-gapped.

We now need to show that the area of non-constant quilted strips can be uniformly bounded away from $0$. Note that we can view $J_G\oplus\pi_2^{\ast}J_{T^{\ast}BG_n}$ and $J_G\oplus \pi_2^{\ast}J_{T^{\ast}BG_n}$ as a constant \emph{quilted} datum on the strip. Consider a quilted strip with respect to this datum, and denote it by $\underline{u}=(u_1,u_2)$. Then $\pi_2\oplus\pi_2\circ \underline{u}:Z\to T^{\ast}BG_n\times T^{\ast}BG_{n+1}$ is homotopically trivial, holomorphic and hence constant. Then $\underline{u}$ can be seen as a quilted map to $X$, but note that the Lagrangian correspondence $\mathcal{I}_n$ restricted to a fiber is just the diagonal: hence $\underline{u}:Z\to X$ is just a $J_G$-holomorphic strip, thus has minimal energy $\hbar>0$. By Gromov compactness, there exists a neighbourhood $V$ of this fixed quilted almost complex structure such that any quilt has minimal energy $\hbar-\varepsilon>0$. The map that to a quilted pair associates its asymptotic values, $\underline{J}\mapsto (J_{-\infty},J_{+\infty})\subset \mathcal{J}^{\operatorname{split}}_{\operatorname{adm}}(X_n)\times \mathcal{J}^{\operatorname{split}}_{\operatorname{adm}}(X_{n+1})$ is open, and thus the image of $V$ is an open subset of the product. Up to shrinking this subset, we may assume it is of the form $V^{n,n+1}_{\varepsilon,n}\times V^{n,(n+1)}_{\varepsilon,n+1}$ (we add the superscript because this construction depends on a given pair) Set $W_{\varepsilon,n}=U_{\varepsilon,n}\cap V^{(n-1,n)}_{\varepsilon,n}\cap V^{(n,n+1)}_{\varepsilon,n}$ (i.e. we intersect $U_{\varepsilon,n}$ with the above neighbourhoods coming from the previous space and the next). Now take the product $\prod_{n\in \mathbb{N}} W_{\varepsilon,n}$, being a countable product of open subsets of completely metrizable spaces it is completely metrizable and hence Baire, so we can find regular almost complex structures without any issue. Furthermore, for any tuple of complex structures in this product we have that each differential on $X_n$ is $\hbar-\varepsilon$-gapped and there exist (for any two consecutive indices) quilted data that are $\hbar-\varepsilon$-gapped.

Finally, we proceed similarly with the product space of all Hamiltonian perturbations: i.e. up to shrinking $\operatorname{Ham}_0(X_n,X_{n+1})$ we can ensure that there are generic Hamiltonian perturbations such that the space of non-homotopically trivial quilted strips remain $\hbar-\varepsilon$-gapped.

\subsection{Convexity I}\label{Conv1}
Something we skipped in the compactness argument is the role of convexity, namely we need to show that in the $G$-convex setting the quilt-cascades don't escape to infinity. Note that by the maximum principle, regular holomorphic strips don't escape to infinity, and similarly given a quilted strip $\underline{u}$, $\tilde{\rho}_n\oplus \tilde{\rho}_{n+1}\circ \underline{u}$ cannot achieve its maximum at a point in the interior of a patch. It could achieve its maximum at the true boundary components, but since we assume them to be compact this poses no problem. It remains to rule out the case where the maximum is achieved at the seam.

Suppose the maximum was achieved at the seam and denote by $(0,t_0)$ this point. By folding and using the strong maximum principle (when we fold the seam becomes a boundary component) we get that, at $\left(0,t_0\right)$:
$$d\tilde{\rho}_{n}\left(\frac{\partial}{\partial s}\right)-d\tilde{\rho}_{n+1}\left(\frac{\partial }{\partial s}\right)>0.$$
However, since $\cfrac{\partial\underline{u}}{\partial t}_{(0,t_0)}$ is tangent to the seam, it splits (with respect to the symplectic connection) as $(\xi,\eta)\oplus(\xi, \eta')$ i.e. a `diagonal part' on the fiber and a `conormal part' on the base. Since we are using the same structure $J_C$ on the fiber we get that $d\tilde{\rho}_n(J_C\xi)=d\tilde{\rho}_{n+1}(J_C \xi)$ and hence the difference is identically zero. Now the pair $(\eta,\eta')$ is tangent to the conormal bundle  $N_{\Gamma(\iota_n)}$ which is actually Legendrian for the contact structure. Similarly to the proof of theorem 2.1 of \cite{Oh3}, the difference $d\tilde{\rho}_n(J_{C}\eta)-d\tilde{\rho}_{n+1}(J_C\eta')$ must be zero.
\section{Proof of the main theorem}

\subsection{Hypotheses} In this subsection we assume that the action of $G$ on $\mu_{X}^{-1}(0)$ is free. We also assume for simplicity that the reduction $X\slash\slash G$ satisfies hypothesis \eqref{TH}, this is true for example as soon as $G$ is connected. Again using the techniques from appendix \ref{Appendix} this hypothesis can be removed.

\subsection{The projection maps}
Motivated by the Marsden-Weinstein correspondence we define the following Lagrangian correspondence on $X_n\times \overline{X\slash \slash G}$ as the image of the maps:
$$\Pi_n:= i\times \pi_1: \mu_{X}^{-1}(0)\times_G 0_{EG_n}\rightrightarrows X_n \times \overline{X\slash\slash G}$$
where $i$ is the inclusion and $\pi_1$ is the induced map from the composition of the first coordinate projection $\mu_{X}^{-1}(0)\times 0_{EG_n}\to \mu_{X}^{-1}(0)$ with the canonical projection $\mu_{X}^{-1}(0)\to X\slash\slash G$. This is easily checked to be a Lagrangian correspondence.

In similar fashion to the increment maps, we decorate the quilted surface $\underline{Z}$ as follows: we assign the manifold $X_n$ to $P_0$, the manifold $X\slash\slash G$ to $P_1$, the Lagrangian correspondence $\Pi_n$ to the seam $S$, and the Lagrangian submanifolds $L^{n}_j$ to $\partial^{j}P_0$ and $\overline{L_j}$ to $\partial^{j}P_1$, as in the following picture:
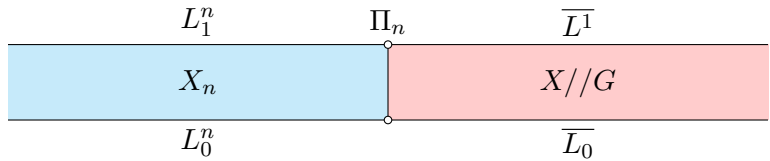
\begin{figure}[h]
  \centering
    \begin{tikzpicture}
    \fill[cyan!20!white] (0,0) rectangle +(5,1);
    \fill[red!20!white] (5,0) rectangle +(5,1);
    \draw (0,1) -- (10,1);
    \draw (0,0) -- (10,0);
    \draw (5,0) -- (5,1)node[above]{$\Pi_{n}$};
    \path (2.5,0.5)node{$X_{n}$}--(7.5,0.5)node{$X//G$};
    \path (2.5,-0.3)node{$L^{n}_{0}$}--+(5,0)node{$\overline{L_{0}}$};
    \path (2.5,1.3)node{$L^{n}_{1}$}--+(5,0)node{$\overline{L^{1}}$};
    \path[every node/.style={draw,circle,fill=white,inner sep=1pt}] (5,1)node{}--(5,0)node{};
  \end{tikzpicture}
  \caption{Decoration of $\underline{Z}$.}
  \label{fig:decoration-underline-Z}
\end{figure}

For a given quilted datum $(\underline{J},\underline{H})$ and two connected components $C_{-}\subset L_0^{n}\cap L_1^{n}$, $C_{+}\subset \overline{L_0}\cap \overline{L_1}$ we denote by $\mathcal{Q}(\Pi_n, C_{-},C_{+})$ the space of pseudoholomorphic quilts $\underline{u}=(u_1,u_2)$ (with respect to the stated decoration) such that $u_1(-\infty)\in C_{-}$ and $u_2(+\infty)\in C_{+}$.

The following theorem is proven exactly the same way as theorem \ref{TransversalityII}.
\begin{theorem}
  For a given quilted almost complex structure $\underline{J}$, there exists a comeagre subset in the space of Hamiltonian perturbations such that for any two connected components $C_{-}$ and $C_{+}$, the moduli space $\mathcal{Q}(\Pi_n,C_{-},C_{+})$ is transversely cut-out and the evaluation maps at $\pm\infty$ are submersive.
\end{theorem}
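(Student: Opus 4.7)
The plan is to mimic the proof of Theorem~\ref{TransversalityII} nearly verbatim, adapting the two essential ingredients (universal surjectivity of the linearized Cauchy--Riemann operator, and submersivity of the evaluation maps) to the new seam correspondence $\Pi_n$. First I would set up the Banach manifold $\mathcal{B}^{1,p}_{\delta}(C_{-},C_{+})$ of quilted maps $\underline{u}=(u_1,u_2)$ of local regularity $W^{1,p}$ and $\delta$-fast exponential convergence at both the incoming/outgoing ends and the two free ends, and the corresponding Banach bundle $\mathcal{E}^{p}_{\delta}$ whose fiber over $\underline{u}$ is
$$L^{p}_{\delta}(\Gamma(u_{1}^{\ast}TX_n\otimes \Omega^{0,1}_{P_0}))\oplus L^{p}_{\delta}(\Gamma(u_{2}^{\ast}T(X//G)\otimes \Omega^{0,1}_{P_1})).$$
For $\delta$ small enough (depending on the asymptotic data and on $\Pi_n$), the linearization of the Cauchy--Riemann operator on each fiber of the universal moduli space is Fredholm. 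Reducing to the connected component of some reference Hamiltonian perturbation is routine.

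Surjectivity of the universal operator $D^{\operatorname{univ}}_{\underline{u},\underline{J},\underline{H}}$ follows the same cokernel-plus-analytic-continuation argument: pick $\underline{\eta}$ in the cokernel, apply elliptic regularity to get smoothness from the adjoint equation, and use the freedom to choose $(\delta K)^{0,1}$ arbitrarily inside precompact open subsets $U_1\subset \operatorname{int}(P_0)$ and $U_2\subset \operatorname{int}(P_1)$ to force $\eta_i|_{U_i}=0$, hence $\eta_i\equiv 0$ by unique continuation. The only conceptual novelty compared to Theorem~\ref{TransversalityII} is that the right patch now lives in $X//G$, which has no fibration over a cotangent bundle of a classifying space; however, this only \emph{simplifies} matters, because for a split $J_{P_0}$ the composition $\pi_2\circ u_1$ is a pseudoholomorphic half-strip whose seam boundary lies in the $T^{\ast}BG_n$-projection of $\Pi_n$, namely the zero-section $0_{BG_n}$, and whose true boundaries also lie in $0_{BG_n}$, so it is constant. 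Consequently $u_1$ is contained in a single fiber of $\pi_2$ identified with $X$, and in that fiber the seam condition becomes the Marsden--Weinstein correspondence $\mu_X^{-1}(0)\subset X\times\overline{X//G}$; the transversality problem therefore reduces to the standard quilted transversality problem for $(u_1',u_2)$ on $X\times\overline{X//G}$, for which the linearized operator on the fiber part is surjective by the arguments of \cite[Section~6]{Schmaschke}.

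Once surjectivity of the universal operator holds on the subspace $T^{0}_{\underline{u}}\mathcal{B}^{1,p}_{\delta}$ of sections that vanish asymptotically, the Sard--Smale theorem applied to the projection from the universal moduli space onto the Hamiltonian perturbations yields the desired comeagre subset. Submersivity of $\operatorname{ev}_{\pm\infty}$ is then obtained exactly as in the proof of Theorem~\ref{TransversalityII}: given any prescribed tangent vector $v$ at an evaluated intersection point, one chooses an arbitrary lift $(\underline{\xi}_0,\underline{K}_0)$ with $d\operatorname{ev}(\underline{\xi}_0,\underline{K}_0)=v$, subtracts off a correction $(\underline{\xi}_1,\underline{K}_1)$ produced by surjectivity on the zero-boundary subspace, and obtains a kernel element with evaluation $v$. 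The Fréchet nature of $\mathcal{H}$ is handled, as in \cite[Section~9k]{Seidel2}, by replacing it with a dense Banach submanifold carrying Floer's $\varepsilon$-norm. I expect no serious obstacle beyond verifying that the ``split--forces--constant--on--base'' reduction really goes through for $\Pi_n$, which is the one place where the proof genuinely deviates from Theorem~\ref{TransversalityII}.
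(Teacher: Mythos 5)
Your proposal is correct and follows exactly the route the paper takes: the paper's entire proof of this theorem is the single remark that it ``is proven exactly the same way as theorem \ref{TransversalityII},'' and your elaboration (Banach setup with $\delta$-weighted spaces, universal surjectivity via Hamiltonian perturbations plus unique continuation, Sard--Smale, and the kernel-correction trick for submersivity of the evaluations) is precisely that proof adapted to the seam condition $\Pi_n$. The one mild redundancy is that the ``split-forces-constant-on-base'' reduction is not needed for surjectivity of the universal operator (the cokernel argument already handles it); in Theorem \ref{TransversalityII} that reduction only enters the Fredholm index computation, which is not part of the present statement.
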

We also will need the following proposition:
\begin{prop}
  If $\underline{H}=0$, the space of constant $\Pi_n$-quilts is transversely cut-out and diffeomorphic to the graph
  $$\Gamma(\pi_1)\subset (L_0^{n}\cap L_1^{n})\times (\overline{L_0}\cap \overline{L_1})$$
  where $\pi_1: L_0^{n}\cap L_1^{n}\to \overline{L_0}\cap \overline{L_1}$ is the quotient of the first-coordinate projection.
\end{prop}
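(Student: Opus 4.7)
The plan is to follow the template of the corollary immediately after Theorem \ref{TransversalityII}: first identify the space of constant $\Pi_n$-quilts with $\Gamma(\pi_1)$ set-theoretically and smoothly, and then verify transversality by an index computation analogous to the one at the end of the proof of Theorem \ref{TransversalityII}.

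For the identification, I would start from the observation that a constant quilt is a pair of constant maps $(u_1 \equiv p_1, u_2 \equiv p_2)$ with $p_1 \in L_0^n \cap L_1^n$, $p_2 \in \overline{L_0} \cap \overline{L_1}$, and $(p_1, p_2) \in \Pi_n$. Since $L_0^n$ and $L_1^n$ both lie in the image of $\mu_X^{-1}(0) \times_G 0_{EG_n}$ inside $X_n$ (as $L_0, L_1 \subset \mu_X^{-1}(0)$), unpacking the definition of $\Pi_n$ as the image of $i \times \pi_1$ shows the seam condition forces $p_2 = \pi_1(p_1)$. Hence the space of constant $\Pi_n$-quilts is smoothly parametrised by $\{(p_1, \pi_1(p_1)) : p_1 \in L_0^n \cap L_1^n\} = \Gamma(\pi_1)$.

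For transversality, I would run the index calculation from the proof of Theorem \ref{TransversalityII} in the $\Pi_n$ setting. Fixing a split almost complex structure on $X_n$, the composition $\pi_2 \circ u_1$ is a pseudoholomorphic map into $T^*BG_n$ whose true boundary components and seam all map into $0_{BG_n}$ (the seam because $\Pi_n$ projects to the zero section via $\pi_2$); by the usual energy/maximum-principle argument it must be constant. The linearised Cauchy--Riemann operator then splits into a base part contributing index $\dim BG_n$ and a fibre part which, after folding, becomes a strip in $X \times \overline{X//G}$ with seam on the Marsden--Weinstein correspondence, whose index is computed by the clean-intersection formula of \cite{Schmaschke}. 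For a constant quilt, $\operatorname{Mas}(\underline{u}) = 0$, and substituting the dimensions of the fibre components $C'_- = C$, $C'_+ = C/G$ (where $C$ is the relevant component of $L_0 \cap L_1 \subset \mu_X^{-1}(0)$) into the resulting formula yields $\dim \Gamma(\pi_1) = \dim(L_0^n \cap L_1^n)$, matching the actual dimension and giving the desired transversality.

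The main obstacle is establishing the index formula for $\Pi_n$-quilts. Although the strategy is entirely parallel to the $\mathcal{I}_n$ case of Theorem \ref{TransversalityII}, the asymmetry between the two patches (one fibered over $T^*BG_n$, the other the reduction $X//G$ without any base factor) requires some care, particularly in the folding step that reduces the Marsden--Weinstein seam on the fibre side to a Lagrangian boundary inside $X \times \overline{X//G}$. Once this index formula is in hand, the dimension-matching reduces to a direct substitution.
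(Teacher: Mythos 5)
Your identification of the space of constant quilts with $\Gamma(\pi_1)$ is correct and is exactly how the paper's proof begins. The gap is in the transversality step. Knowing that the Fredholm index at a constant quilt equals $\dim\Gamma(\pi_1)$ does not by itself give transversality: the constant sections already supply a subspace of the kernel of the linearized operator of dimension $\dim\Gamma(\pi_1)$, so the index identity only yields $\dim\operatorname{coker}=\dim\ker-\dim\Gamma(\pi_1)\geq 0$, which is the wrong inequality. To conclude you must in addition show that the kernel is \emph{exactly} the space of constant seam- and boundary-compatible sections, or equivalently prove surjectivity of the linearized operator directly; the dimension match is the output of that argument, not a substitute for it. (The corollary after Theorem~\ref{TransversalityII} that you are modelling this on is itself implicitly leaning on the known kernel computation for constant strips, cf.\ \cite{Lekili-Lipyanskiy}, Section 2.4; it is not a pure index count.)

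Moreover, the reduction of the fibre part to a problem where that kernel computation applies is where the real work lies, and ``folding onto the Marsden--Weinstein seam'' does not immediately get you there: unlike the diagonal seam appearing for $\mathcal{I}_n$, the correspondence $\{(x,\pi(x))\}\subset X\times\overline{X\slash\slash G}$ is not the graph of a symplectomorphism (its fibres over $X\slash\slash G$ are the $G$-orbits), so the seam cannot simply be erased. The paper supplies the missing geometric input: fix a $G$-invariant $J_G$, split $T_{u_1'}X\cong T_{u_1'}\mathcal{O}_{u_1'}\oplus\bigl(T_{u_1'}\mathcal{O}_{u_1'}\bigr)^{\bot}\oplus J_G\cdot T_{u_1'}\mathcal{O}_{u_1'}$ holomorphically, and identify $\bigl(T_{u_1'}\mathcal{O}_{u_1'}\bigr)^{\bot}$ with $T_{u_2}(X\slash\slash G)$. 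The seam condition then becomes the diagonal on the transverse summand (a constant quilted strip with equal Lagrangian boundary conditions, where surjectivity and the identification of the kernel with constants are standard) and the Lagrangian condition $T\mathcal{O}$ inside the symplectic space $T\mathcal{O}\oplus J_G T\mathcal{O}$ on the orbit summand (again a same-Lagrangian constant-strip problem). Surjectivity on each summand gives surjectivity of the whole operator. Your outline would be complete if the index-matching conclusion were replaced by this (or an equivalent) surjectivity argument.
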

\begin{proof}
  We sketch the proof. The space of constant quilts is clearly diffeomorphic to the given graph, by definition. Thus it is sufficient to show that this space is transversely cut-out for a \emph{given} quilted almost complex structure $\underline{J}$. Indeed, since the index is independent of the choice of $\underline{J}$, surjectivity with respect to a given quilted datum implies surjectivity for all as the dimension of the kernel will remain unchanged.

  As expected, we fix a $G-$invariant almost complex structure $J_G$ on $X$, which induces an almost complex structure $\overline{J_G}$ on $X\slash\slash G$, and define the quilted datum by $J_0\equiv J_{G}\oplus \pi_2^{\ast}J_{T^{\ast}BG_n}$ and $J_1\equiv \overline{J_G}$. Given a constant quilt $\underline{u}=(u_1,u_2)\in \Pi_n$, we trivialize the tangent space at $u_1$ (which is just a point) as
  $$u_1^{\ast}TX_{n}\cong T_{u_1'}X\oplus \pi_{2}^{\ast}T_{\pi_2(u_1)} T^{\ast}BG_n$$
  where $u_1'\in \mu_X^{-1}(0)$ is the point in the fiber, identified to $X$. Note that with respect to the second summand surjectivity is assured: indeed it reduces to the case of constant holomorphic maps from the patch $P_0$ to $T^{\ast}BG_n$ with boundary in $0_{BG_n}$, which by index reasons is transversely cut-out. We may thus reduce the problem to showing that the linearized Cauchy-Riemann operator at constant pairs $(u_1', u_2)$ where $u_1'\in \mu_{X}^{-1}(0), u_2=\pi(u_1)\in X\slash\slash G$ is surjective. By our choice of $J_G$, we have a splitting
  $$T_{u_1'}X\cong T_{u_1'}\mathcal{O}_{u_1'}\oplus \left(T_{u_1'}\mathcal{O}_{u_1'}\right)^{\bot}\oplus J_G\cdot T_{u_1'}\mathcal{O}_{u_1'}$$
  where $\mathcal{O}_{u_1'}$ is the orbit of $G$ going through $u_1'$, and the term $(T_{u_1'}\mathcal{O}_{u_1'})^{\bot}$ denotes the orthogonal complement (with respect to the metric induced by $J_G$) of $T_{u_1'}\mathcal{O}_{u_1'}$ in the space $T_{u_1}\mu_{X}^{-1}(0)$. There is a $J_{G}$ holomorphic identification of $T_{u_2}X\slash\slash G$ with $(T_{u_1'}\mathcal{O}_{u_1'})^{\bot}$ by definition of the induced almost complex structure.

  All the mentioned splittings are holomorphic, so the Cauchy-Riemann operator splits into a direct sum of Cauchy-Riemann operators taking values in a direct sum. We can split the problem in two parts
  \begin{itemize}
  \item Surjectivity of Cauchy-Riemann operator from the space of maps from $P_0$ to $T_{u_1'}\mathcal{O}_{u_1'}\oplus J_{G}\cdot T_{u_1'}\mathcal{O}_{u_1'}$ with boundary conditions $T_{u_1'}\mathcal{O}_{u_1'}$ (note these are Lagrangian). But as mentioned before, the Cauchy-Riemann operator is surjective on maps with the same Lagrangian as boundary conditions.
  \item Surjectivity of the restriction to quilted sections decorated by the diagonal and with boundaries in $T_{u_1'}L_j\cap (T_{u_1'}\mathcal{O}_{u_1'})^{\bot}$, but this problem reduces to the surjectivity of the operator for constant strips, where we know it holds (this can once again, this can be verified using \cite{Schmaschke}'s dimension formula or directly as in \cite{Lekili-Lipyanskiy}, section 2.4).
  \end{itemize}
  Thus the linearized Cauchy-Riemann operator is surjective on each part of the splitting, so surjectivity follows.
\end{proof}
Compactness is a bit more subtle, and it will follow from the following analysis on $\Pi_n$-quilted strips. A sequence of such strips can degenerate to one of the following cases:
\begin{enumerate}
\item Develop a bubble in a true boundary or the interior of a patch,
\item quilted strip breaking at the free end,
\item quilted bubbling at the seam,
\item strip breaking at an incoming or outgoing end.
\end{enumerate}
We will show that the only case that can happen is the last one.

The first case cannot happen because of hypothesis \ref{RE} on both $X_n$ and $X//G$.

We draw a picture of the second case:
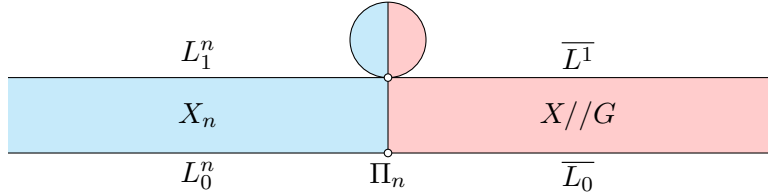
\begin{figure}[h]
  \centering
    \begin{tikzpicture}
    \fill[cyan!20!white] (0,0) rectangle +(5,1) (5,1)--+(0,1)arc(90:270:0.5);
    \fill[red!20!white] (5,0) rectangle +(5,1) (5,1)--+(0,1)arc(90:-90:0.5);
    \draw (0,1) -- (10,1);
    \draw (0,0) -- (10,0);
    \draw (5,0) node[below]{$\Pi_{n}$}-- (5,2);
    \path (2.5,0.5)node{$X_{n}$}--(7.5,0.5)node{$X//G$};
    \path (2.5,-0.3)node{$L^{n}_{0}$}--+(5,0)node{$\overline{L_{0}}$};
    \path (2.5,1.3)node{$L^{n}_{1}$}--+(5,0)node{$\overline{L^{1}}$};
    \draw (5,1.5) circle (0.5);
    \path[every node/.style={draw,circle,fill=white,inner sep=1pt}] (5,1)node{}--(5,0)node{};
  \end{tikzpicture}
  \caption{Strip breaking at one of the free ends}
  \label{fig:strip-break-free-end}
\end{figure}
We claim we can lift such a strip to a disc on $X_n$ with boundary in $L^{n}_1$. Indeed, if we write it as $\underline{u}=(u_1,u_2)$, then since $\mu_{X}^{-1}(0)\times_G 0_{EG_n}\to X\slash\slash G$ is a fiber bundle we can lift $u_2$ (since its domain is contractible) and we can choose such a lift such that it agrees at the seam with $u_1$.

Now this disc has zero symplectic area, and thus is constant.

In the third case, we use a trick from \cite[Proposition 4.36]{Cazassus}: we can follow the seam to a point in $L_1^{n}\times \overline{L_1}$ and see it as a quilted disc as in case 2: it must then have zero area and thus be constant. This argument is similar to the way we deduce that if $L\subset X$ is relatively exact, then $X$ must be aspherical.

Finally, the only case that can actually happen is strip breaking at an incoming or outgoing end.

We play the same game as before: we fix a Morse function $\overline{f}:\overline{L_0}\cap \overline{L_1}\to \R$ together with a Morse-Smale metric and we define the space of $\Pi_n$-quilt cascades as before and denote the moduli space as $\mathcal{QC}(p,q,A;\Pi_n)$. This serves to define a map
\begin{eqnarray*}
  \mathfrak{P}_n: FC_{\ast}(L_0^{n},L_1^{n};\Lambda_0)&\to & FC_{\ast}(\overline{L_0},\overline{L_1};\Lambda_0)\\
  p & \mapsto & \sum_{q} \#_2 \mathcal{QC}_{[0]}(p,q,A;\Pi_n) T^{\omega(A)}\cdot q.
\end{eqnarray*}
The compactification of the 1-dimensional moduli space implies this is a chain map.

\textbf{Warning:} As before, we need to restrict ourselves to a smaller class of perturbation data (i.e. sufficiently small Hamiltonian perturbations) to ensure that the element on the right is in $\Lambda_0$, in other words such that for any quilted strip $\underline{u}$ we have $\omega(\underline{u})=0$ if and only if $[\underline{u}]=0$.

With this caveat in mind, we can ask what the zero-energy part of $\mathfrak{P}_n$ is. We will require the following technical lemma:
\begin{lema}
  There exists universal Morse-Smale data $((f_0,g_0),(f_1,g_1),\dots)$ on all the $L_0^{n}\cap L_1^{n}$ and $(\overline{f},\overline{g})$ on $\overline{L_0}\cap \overline{L_1}$ such that the space of grafted Morse lines $\mathcal{MG}_{\pi_1^{n}}(p,q)$ is transversely cut out for all $\pi_1^{n}: L_0^{n}\cap L_1^{n}\to \overline{L_0}\cap \overline{L_1}$, where $\pi_1^{n}$ is the quotient of the projection onto the first coordinate.
\end{lema}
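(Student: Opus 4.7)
The plan is to adapt the Baire category argument already employed in Subsection \ref{MorseData} for equivariant Morse homology, now handled so as to accommodate the countably many projection maps $\pi_1^n$ against a single common target $\overline{L_0}\cap\overline{L_1}$ equipped with a single metric $\overline{g}$.

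First I would fix any Morse function $\overline{f}$ on $\overline{L_0}\cap\overline{L_1}$. For each $n$, I then choose a Morse function $f_n$ on $L_0^{n}\cap L_1^{n}$ whose critical set satisfies $\pi_1^n(\operatorname{crit} f_n)\cap \operatorname{crit}\overline{f}=\emptyset$; since $\operatorname{crit}\overline{f}$ is a finite set and a $C^2$-generic Morse function has all of its critical values in general position, an arbitrarily small perturbation of any initial choice of $f_n$ achieves this. This is precisely the non-degeneracy hypothesis required to invoke the transversality proposition of Subsection \ref{PushforwardSection} for the map $\pi_1^n$.

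Next I form the product
$$R:=\operatorname{Met}(\overline{L_0}\cap\overline{L_1})\times \prod_{n\in \mathbb{N}} \operatorname{Met}(L_0^{n}\cap L_1^{n}).$$
Being a countable product of completely metrizable spaces, $R$ is completely metrizable and hence a Baire space. For each $n\in\mathbb{N}$, let
$$V_n\subset R$$
be the subset of tuples $(\overline{g},(g_k)_{k\in\mathbb{N}})$ such that both $(\overline{f},\overline{g})$ and $(f_n,g_n)$ are Morse-Smale pairs and the moduli spaces $\mathcal{MG}_{\pi_1^n}(p,q)$ are transversely cut out for every $p\in \operatorname{crit} f_n$, $q\in \operatorname{crit}\overline{f}$. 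The condition on $V_n$ only depends on the two coordinates $g_n$ and $\overline{g}$, and by the proposition of Subsection \ref{PushforwardSection}, these two conditions define an open dense subset of $\operatorname{Met}(L_0^n\cap L_1^n)\times\operatorname{Met}(\overline{L_0}\cap\overline{L_1})$; pulled back to $R$ along the two coordinate projections, $V_n$ is therefore open and dense.

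Finally, $\bigcap_{n\in\mathbb{N}}V_n$ is a countable intersection of open dense sets in a Baire space, hence dense in $R$. Any element of this intersection provides the desired universal Morse data. The main obstacle is the fact that the metric $\overline{g}$ is shared among all $n$, so a single choice must simultaneously regularize infinitely many pushforward moduli problems; this is precisely what the Baire argument on the infinite product resolves, exactly as in the equivariant Morse setup.
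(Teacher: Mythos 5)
Your argument is correct and is exactly the genericity/Baire-category argument the paper alludes to (the proof is omitted there as "similar to those done before", i.e.\ the universal Morse data construction of Subsection~\ref{MorseData}); in particular you correctly identify and handle the one nontrivial point, namely that the single metric $\overline{g}$ must simultaneously regularize countably many pushforward problems. The only detail worth adding is that when re-perturbing the $f_n$ to achieve $\pi_1^n(\crit f_n)\cap\crit\overline{f}=\emptyset$ one should also preserve the previously imposed condition $\Id\times_G\iota_n(\crit f_n)\cap\crit f_{n+1}=\emptyset$, which is harmless since both are open dense conditions.
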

\begin{proof}
  The proof is a genericity statement similar as to those done before and is omitted.
\end{proof}

Finally, as with proposition \ref{ZeroEnergyPart} we can show that the zero energy part coincides (after choosing some possibly smaller open subset of Hamiltonian perturbations) with the pushforward
$$\pi_{1_{\ast}}^{n}\otimes \Id: MC_{\ast}(L_0^{n}\cap L_1^{n})\otimes \Lambda_0 \to MC_{\ast}(\overline{L_0}\cap\overline{L_1})\otimes \Lambda_0.$$
\subsection{Convexity II}
In the case where $X$ is non-compact we have to once again justify that quilted discs do not escape to infinity. Note that hypothesis \ref{GC} combined with lemma \ref{LemmaConvex} give a convex structure $\overline{\rho}$ on $X\slash\slash G$. Similarly to section \ref{Conv1} the only problem is if a maximum is reached at the seam. After folding and identifying the quilt as a map to the product, exists a point $(0,t_0)$ such that $(d\tilde{\rho}_n-d\overline{\rho})\cfrac{\partial \underline{u}}{\partial s}>0$. By construction of $\tilde{\rho}_n$ and since points of $\Pi_n$ lie in the zero section, we conclude that $\rho_{EG_n}$ is constant and we are reduced to $(d\rho-d\overline{\rho})(\partial_s \underline{u})>0$. If we denote by $u_1$ and $u_2$ the components of $\underline{u}$ we have $d\rho(\partial_s u_1)-d\overline{\rho}(-\partial_s u_2)>0$. However, we have that the pair $(\partial_t u_1,\partial_t u_2)$ is tangent to the seam so one the second vector is the quotient projection of the first. Split the tangent space at $x=u_1(0,t_0)$ into $T_{x}\mathcal{O}^{\bot}\oplus T_{x}\mathcal{O}\oplus J_G T_{x}\mathcal{O}$ where $\mathcal{O}$ denotes the $G$-orbit at $x$ and accordingly split $\partial_s u_1=(\xi,\eta, \eta')$. The fact that $\partial_t u_2$ is the quotient of $\partial_t u_1$ implies that $\partial_s u_2(-\partial_s)=\xi$ and by construction of $\overline{\rho}$ we get that $d\rho(\xi)-d\overline{\rho}(-\partial_s u_2)=0$. It remains to show that $d\rho(\eta,\eta')=0$, but this follows from $G$-invariance of $\rho$ and the fact that $d\rho(JX_{\xi})=0$ on $\mu^{-1}(0)$. We thus get a contradiction. 
This shows that $\Pi_n$-quilted discs remain in a compact set.
\subsection{The projection maps commute up to homotopy}

We start with the following proposition:
\begin{prop}
  The composition $\mathcal{I}_n \circ \Pi_{n+1}$ is embedded and equal to $\Pi_{n}$.
\end{prop}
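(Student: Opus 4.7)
The plan is to prove the proposition by unpacking the definitions at the level of explicit lifts to the ambient product, and then checking the two assertions (set-theoretic equality with $\Pi_n$ and embeddedness) in turn. The key observation is that a point of $\Pi_{n+1}$ is represented by a pair whose cotangent component is zero, and the conormal condition $p_1=\iota_n^{\ast}p_2'$ defining $N_{\Gamma(\iota_n)}$ then forces the corresponding cotangent component on the $X_n$-side to vanish as well. Once this is seen, the composition is essentially forced to factor through the zero section.

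More concretely, I would first reduce containment $\mathcal{I}_n\circ\Pi_{n+1}\subset \Pi_n$. Take $(x,z)$ in the composition and pick $y\in X_{n+1}$ with $(x,y)\in \mathcal{I}_n$ and $(y,z)\in \Pi_{n+1}$. Since $y\in \mu_X^{-1}(0)\times_G 0_{EG_{n+1}}$, every lift of $y$ to $\mu_{n+1}^{-1}(0)$ has the form $(x_2,(q_2,0))$ (the zero cotangent is fixed by the $G$-action on $T^{\ast}EG_{n+1}$), and $z=\pi(x_2)$. Choosing compatible lifts $\tilde x=(x_1,(q_1,p_1))$ of $x$ and $\tilde y=(x_2,(q_2,0))$ of $y$ with $(\tilde x,\tilde y)\in \widehat{\mathcal{I}_n}$, the definition $\widehat{\mathcal{I}_n}=\Delta_X\times N_{\Gamma(\iota_n)}$ gives $x_1=x_2$, $q_2=\iota_n(q_1)$ and $p_1=\iota_n^{\ast}(0)=0$. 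Combined with $\mu_n(\tilde x)=0$ this yields $x_1\in \mu_X^{-1}(0)$, and then $(x,z)=([x_1,(q_1,0)],\pi(x_1))\in \Pi_n$. The reverse inclusion is immediate: given $(x,z)=([x_1,(q_1,0)],\pi(x_1))\in \Pi_n$, set $y:=[x_1,(\iota_n(q_1),0)]\in X_{n+1}$ and verify that the lifts $\tilde x=(x_1,(q_1,0))$, $\tilde y=(x_1,(\iota_n(q_1),0))$ satisfy all the conormal and moment-map conditions.

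To upgrade set-theoretic equality to the statement that the composition is \emph{embedded}, it suffices to show that the intermediate $y$ above is determined by $(x,z)$, so that the projection from the fibre product $\mathcal{I}_n\times_{X_{n+1}}\Pi_{n+1}\to X_n\times \overline{X//G}$ is injective; smoothness and the immersion property then follow from a dimension count (the fibre product is in bijection with $\Pi_n$ and maps onto it via a surjective submersion that is already known to be an immersion). Uniqueness of $y$ is exactly the argument above: any representative of $y$ must have the form $(x_1,(\iota_n(q_1),0))$ where $(x_1,(q_1,0))$ is a representative of $x$, and the ambiguity in the choice of representative of $x$ is precisely the $G$-action, which descends to the same $y\in X_{n+1}$.

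The main obstacle is essentially bookkeeping: one has to keep straight four different quotients (the two $G$-reductions giving $X_n$ and $X_{n+1}$, the $G\times G$-reduction giving $\mathcal{I}_n$, and the $G$-reduction giving $\overline{X//G}$) and verify that all the moment-map and equivariance constraints are compatible. The conceptual content, however, is the single identity $\iota_n^{\ast}(0)=0$, which collapses the conormal bundle condition in $N_{\Gamma(\iota_n)}$ to a condition purely on the base and thereby lines up $\mathcal{I}_n\circ \Pi_{n+1}$ with $\Pi_n$ on the nose.
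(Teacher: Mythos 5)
Your proposal is correct and follows essentially the same route as the paper: unpack the composition via lifts, use the conormal condition $p_1=\iota_n^{\ast}(0)=0$ together with the moment-map constraint to land in $\Pi_n$, and establish embeddedness by showing the intermediate point $y\in X_{n+1}$ is uniquely determined. You are somewhat more thorough than the paper, which leaves the reverse inclusion and the smoothness/immersion bookkeeping implicit.
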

\begin{proof}
  Pick a point $([x, (q,p)], z)\in X_n\times \overline{X\slash\slash G}$ that lies in the composition. By definition, this means that there exists a point $[x',(q',p')]\in X_{n+1}$ such that
  $$\begin{cases}
    ([x,(q,p)],[x',(q',p')])\in \mathcal{I}_n,\\
    ([x',(q',p')],z)\in \Pi_{n+1}.
  \end{cases}$$
  From the second line we get that $p'=0$ and $\pi(x')=z$. Plugging this into the first line we get that $p=0$, that $x\in \mu_{X}^{-1}(0)$ and that $\pi(x)=z$. From this it follows that $([x,(q,0)],z)\in \Pi_n$.

  Now we need to show that the element $[x',(q',p')]$ is unique (this shows that the canonical projection is injective). But this is clear because the only element that can satisfy both conditions is $[x',(q',0)]= \Id\times_G \iota_n [x,(q,0)]$. This completes the proof.
\end{proof}
We will construct a homotopy between the different spaces of quilt cascades involved. Fix a real parameter $R\in [0,\infty)$ and fix some small $\epsilon>0$. For $R\in [\epsilon,\infty)$ we define a quilted surface $\underline{Z}^{R}$ decorated as in the following picture:
\begin{figure}[h]
  \centering
  \begin{tikzpicture}
    \fill[cyan!20!white] (0,0) rectangle +(4,1);
    \fill[yellow!20!white] (4,0) rectangle +(2,1);
    \fill[red!20!white] (6,0) rectangle +(4,1);
    
    \draw (0,0)--(10,0) (0,1)--+(10,0) (4,0)--+(0,1) (6,0)--+(0,1);
    \path (0,0.5) +(2,0) node{$X_{n}$} +(5,0) node{$X_{n+1}$} +(8,0) node{$X//G$};
    \path (0,1) +(2,0) node[above]{$L_{1}^{n}$} +(5,0) node[above]{$L_{1}^{n+1}$} +(8,0) node[above]{$\overline{L_{1}}$} +(4,0) node[above]{$\mathcal{I}_{n}$} +(6,0)node[above]{$\Pi_{n+1}$};
    \path (0,0) +(2,0) node[below]{$L_{0}^{n}$} +(5,0) node[below]{$L_{0}^{n+1}$} +(8,0) node[below]{$\overline{L_{0}}$} +(4,0) node[below]{$0$} +(6,0)node[below]{$R$};
  \end{tikzpicture}
  \caption{Quilted surface $\underline{Z}$. The two seams are located at positions $0$ and $R$, and are labelled by $\mathcal{I}_{n}$ and $\Pi_{n+1}$.}
  \label{fig:quilted-surface-Z}
\end{figure}
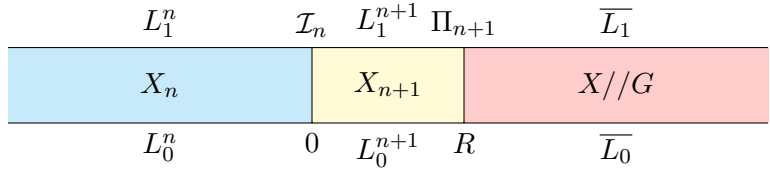

A naïve approach would be to let $R\to 0$: as the seams converge they collapse to the seam decorated by the composition, provided one can rule out figure-eight bubbling (cf. section 5 of \cite{Wehrheim-Woodward1}). But as pointed out \cite[page 41]{Cazassus}, this cannot work as two strip-like ends (namely the free ends) come together, so a bit more care is needed and we follow his approach. The idea is to ``stretch off'' the strip-like ends to obtain another surface with only two quilted free ends, then the strip shrinking analysis from \cite{Wehrheim-Woodward1} applies (again, we have to be careful about figure-eight bubbles, cf, \cite{Bottman}).

Between $\epsilon$ and $\epsilon'\in (0,\epsilon)$ we deform the moduli space as in the following picture:

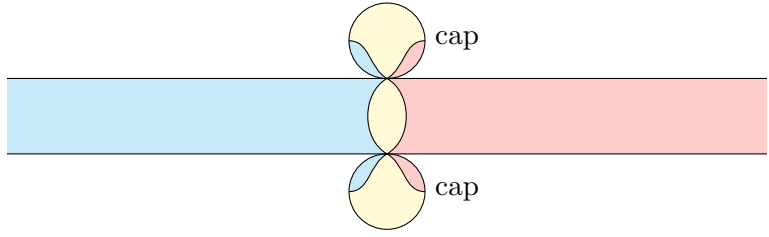
\begin{figure}[h]
  \centering
  \begin{tikzpicture}
    \fill[yellow!20!white] (5,0) to[out=30,in=-30] (5,1) to[out=-150,in=150] (5,0) (5,1.5) circle (0.5) (5,-0.5) circle (0.5);
    \fill[cyan!20!white] (0,0) -- (5,0) to[out=150,in=-150] (5,1)--(0,1)--cycle (4.5,-0.5) to[out=0,in=-150] (5,0) arc (90:180:0.5) (5,1) to[out=150,in=0] (4.5,1.5) arc (180:270:0.5);

    \fill[red!20!white] (10,0) -- (5,0) to[out=30,in=-30] (5,1)--(10,1)--cycle (5.5,-0.5) to[out=180,in=-30] (5,0) arc (90:0:0.5) (5,1) to[out=30,in=180] (5.5,1.5) arc (0:-90:0.5);

    \draw (0,0)--(10,0) (0,1)--+(10,0) (4.5,-0.5) to[out=0,in=-150] (5,0) to[out=30,in=-30] (5,1) to[out=150,in=0] (4.5,1.5) (5.5,-0.5) to[out=180,in=-30] (5,0) to[out=150,in=-150] (5,1) to[out=30,in=180] (5.5,1.5);
    \draw (5,1.5) circle (0.5) (5,-0.5) circle (0.5);

    \path (5.5,1.5) node[right] {cap} (5.5,-0.5) node[right]{cap};
  \end{tikzpicture}
  \caption{By parameter value $\epsilon'$ we bring together the two strip-like ends.}
  \label{fig:quilted-surface-pinch}
\end{figure}

As shown in Figure \ref{fig:quilted-surface-pinch}, this process yields two quilted strips (seen as ``caps'' on the picture), however by hypothesis \ref{RE} these are constant.

Finally, for $R\in [0,\epsilon')$ we make the seams collapse to then obtain a quilted strip with seam decorated by the composition of the correspondences, which by the previous proposition is just $\Pi_n$.

During this step, as the seams collide a figure-eight bubble can be formed, see \cite{Bottman} for a removal of singularity theorem. However, by using the trick we used before, namely taking a path that connects to a Lagrangian product and ``opening up'' the figure-eight bubble we obtain a quilted strip which must have zero area and thus be constant.

We denote by $\mathcal{K}_{R}$ the moduli space of pseudoholomorphic quilts associated to this moduli space. Note that as $R\to \infty$ we obtain two quilted strips (each with the corresponding Lagrangian correspondence) 'kissing' at the endpoints.

Now given two critical points $p\in \crit f_n, q\in \crit \overline{f}$, we define the space of $\mathfrak{H}$-quilt cascades from $p$ to $q$ representing the homotopy $A$ as one of the following two cases:
\begin{enumerate}
\item Either it consists of a cascade with two quilted jumps, one for each Lagrangian correspondence,
\item or it is a quilt-cascade with one quilted jump decorated by $\mathcal{K}_{R}$ for some $R$.
\end{enumerate}
We illustrate the two cases:
\begin{figure}[h]
  \centering
  \begin{tikzpicture}
    \path[decorate,decoration={
      markings,
      mark=between positions 0.65 and 1 step 0.5 with {\arrow{>[scale=1.3,line width=.8pt]};},
    }] (0.5,0) to (1,0);
    \path[decorate,decoration={
      markings,
      mark=between positions 0.6 and 1 step 0.5 with {\arrow{>[scale=1.3,line width=.8pt]};},
    }] (3,0) to (4,0);
    \path[decorate,decoration={
      markings,
      mark=between positions 0.6 and 1 step 0.5 with {\arrow{>[scale=1.3,line width=.8pt]};},
    }] (6,0) to (7,0);
    \path[decorate,decoration={
      markings,
      mark=between positions 0.6 and 1 step 0.5 with {\arrow{>[scale=1.3,line width=.8pt]};},
    }] (9,0) to (10,0);
    \path[decorate,decoration={
      markings,
      mark=between positions 0.65 and 1 step 0.5 with {\arrow{>[scale=1.3,line width=.8pt]};},
    }] (12,0) to (12.5,0);
    
    \draw (0.5,0) node[below] {$p$} node[fill, inner sep=1pt, circle] {}-- (12.5,0)node[fill, inner sep=1pt, circle] {}node[below]{$q$};
    \path[fill=cyan!20!white] (2,0) circle (1 and 0.7) (5,0.7) arc (90:270:1 and 0.7)--cycle;
    \path[fill=red!20!white] (11,0) circle (1 and 0.7) (8,0.7) arc (90:-90:1 and 0.7)--cycle;
    \path[fill=yellow!20!white] (11,0) (8,0.7) arc (90:270:1 and 0.7)--cycle (5,0.7) arc (90:-90:1 and 0.7)--cycle;
    
    \draw (5,0.7)--(5,-0.7) (8,0.7)--(8,-0.7);
    \draw (2,0) circle (1 and 0.7) node{$X_{n}$} (5,0) circle (1 and 0.7) (8,0) circle (1 and 0.7) (11,0) circle (1 and 0.7) node{$X//G$};
  \end{tikzpicture}
  \caption{A quilt cascade with two quilted jumps, one decorated by each Lagrangian correspondence}
  \label{fig:quilt-cascades-two-quilted-jumps}
\end{figure}
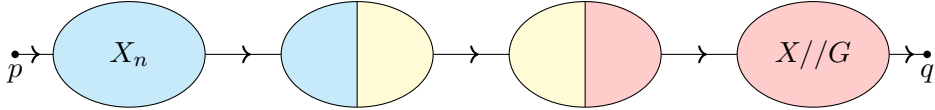

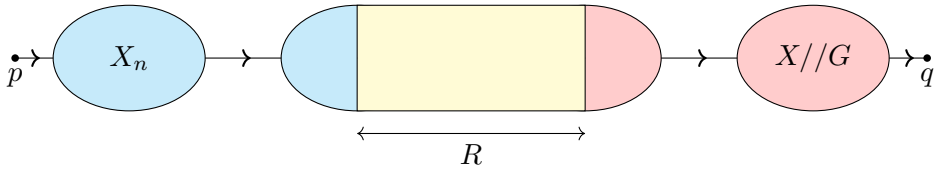
\begin{figure}[h]
  \centering
  \begin{tikzpicture}
    \path[decorate,decoration={
      markings,
      mark=between positions 0.65 and 1 step 0.5 with {\arrow{>[scale=1.3,line width=.8pt]};},
    }] (0.5,0) to (1,0);
    \path[decorate,decoration={
      markings,
      mark=between positions 0.6 and 1 step 0.5 with {\arrow{>[scale=1.3,line width=.8pt]};},
    }] (3,0) to (4,0);
    \path[decorate,decoration={
      markings,
      mark=between positions 0.6 and 1 step 0.5 with {\arrow{>[scale=1.3,line width=.8pt]};},
    }] (6,0) to (7,0);
    \path[decorate,decoration={
      markings,
      mark=between positions 0.6 and 1 step 0.5 with {\arrow{>[scale=1.3,line width=.8pt]};},
    }] (9,0) to (10,0);
    \path[decorate,decoration={
      markings,
      mark=between positions 0.65 and 1 step 0.5 with {\arrow{>[scale=1.3,line width=.8pt]};},
    }] (12,0) to (12.5,0);
    
    \draw (0.5,0) node[below] {$p$} node[fill, inner sep=1pt, circle] {}-- (12.5,0)node[fill, inner sep=1pt, circle] {}node[below]{$q$};
    \path[fill=cyan!20!white] (2,0) circle (1 and 0.7) (5,0.7) arc (90:270:1 and 0.7)--cycle;
    \path[fill=red!20!white] (11,0) circle (1 and 0.7) (8,0.7) arc (90:-90:1 and 0.7)--cycle;
    
    \draw (2,0) circle (1 and 0.7) node{$X_{n}$} (5,0) circle (1 and 0.7) (8,0) circle (1 and 0.7) (11,0) circle (1 and 0.7) node{$X//G$};
    \draw[fill=yellow!20!white] (5,-0.7) rectangle (8,0.7);
    \draw[<->] (5,-1)--node[below]{$R$}(8,-1);
  \end{tikzpicture}
  \caption{A quilt cascade with one quilted jump decorated by an element of $\mathcal{K}_{R}$.}
  \label{fig:quilt-cascades-one-quilted-jumps}
\end{figure}

For generic, arbitrarily small Hamiltonian perturbations (and possible change of the quilted almost complex structures, but this is irrelevant) we can achieve transversality for this moduli spaces, which we denote $\mathfrak{H}(p,q,A)$. We denote the corresponding map between the Floer complexes as $H$.

The zero-dimensional component is compact, hence a finite set. Looking at the compactification of the 1-dimensional component we can have new kinds of degeneration:
\begin{enumerate}
\item A gradient line breaks between the two quilted jumps,
\item a gradient line breaks either before or after both quilted jumps,
\item a gradient line shrinks to zero between the two quilted jumps,
\item the element of $\mathcal{K}_{R}$ in the quilted jump degenerates as $R\to \infty$,
\item the element of $\mathcal{K}_{R}$ in the quilted jump degenerates as $R\to 0$,
\item a gradient line breaks before or after the $\mathcal{K}_{R}$-jump.
\end{enumerate}
Now note that the first case corresponds to elements in $\mathfrak{P}_{n+1}\circ \alpha_n$. The second and sixth cases correspond to elements in $\partial H +H\partial$, the third and fourth cases come in pairs, so we can glue along these ends. Finally, the fifth case corresponds to elements in $\mathfrak{P}_{n}$.

This shows:
\begin{prop}
  The diagrams
  $$\begin{tikzcd}
    FC_{\ast}(L_0^{n},L_1^{n};\Lambda_0) \arrow{rr}{\alpha_n}\arrow{dr}{\mathfrak{P}_n} & & FC_{\ast}(L_0^{n+1},L_1^{n+1};\Lambda_0)\arrow{dl}{\mathfrak{P}_{n+1}}\\
    & FC_{\ast}(\overline{L_0},\overline{L_1};\Lambda_0) &
  \end{tikzcd}$$
  are homotopy commutative.
\end{prop}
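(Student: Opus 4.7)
The plan is to construct a chain homotopy $H\colon FC_{\ast}(L_0^n,L_1^n;\Lambda_0)\to FC_{\ast}(\overline{L_0},\overline{L_1};\Lambda_0)$ by counting zero-dimensional components of the moduli space $\mathfrak{H}(p,q,A)$ described just before the proposition, and then read off the relation $\partial H + H\partial = \mathfrak{P}_{n+1}\circ\alpha_n - \mathfrak{P}_n$ (mod $2$) from the oriented boundary of the one-dimensional component. Concretely, for each real parameter $R\in[0,\infty)$ we have a quilted surface $\underline{Z}^R$: for $R\geq \varepsilon$ it carries two vertical seams, labelled by $\mathcal{I}_n$ and $\Pi_{n+1}$, separated by a patch mapping to $X_{n+1}$; for $R\in[\varepsilon',\varepsilon]$ we continuously deform the two interior free ends so that they meet (capped off by constant quilted discs, by \eqref{RE}); and for $R\in[0,\varepsilon')$ we let the seams collide. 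Using the previous proposition, $\mathcal{I}_n\circ\Pi_{n+1}=\Pi_n$ as an embedded composition, so at $R=0$ we recover a $\Pi_n$-quilt, and the $\mathcal{K}_R$-family gives a smooth homotopy between the two extreme configurations.

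First, I would verify transversality for the total moduli space $\mathfrak{H}(p,q,A)$. This follows by the now-standard universal moduli space argument, adapted from Theorem~\ref{TransversalityII}: one varies Hamiltonian perturbations supported in the interior of each patch (in a small enough neighbourhood of $0$ to preserve the energy-nonnegativity already secured for $\alpha_n$ and $\mathfrak{P}_n$) and applies Sard-Smale. Combined with the Gromov compactness analysis already used for $\mathfrak{P}_n$ and $\alpha_n$, this ensures $\mathfrak{H}(p,q,A)$ is a smooth manifold with corners of the expected dimension and that the zero-dimensional stratum is finite. One then defines
\[
  H(p)=\sum_{q,A}\#_2\mathfrak{H}_{[0]}(p,q,A)\,T^{\omega(A)}\cdot q,
\]
and by the same trick as for $\mathfrak{P}_n$ the $\Lambda_0$-valuedness follows after shrinking the allowed perturbations so that trivial homotopy class is equivalent to zero area.

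The heart of the proof is the boundary analysis of $\mathfrak{H}_{[1]}(p,q,A)$. Apart from the usual Morse flow breaks at the ends (giving $H\partial + \partial H$) and the shrinking/breaking pairs inside the interior (which cancel and are glued away), the two codimension-one strata coming from the parameter $R$ are the genuinely new contributions. As $R\to\infty$ the quilt splits into a $\mathcal{I}_n$-quilt followed by a $\Pi_{n+1}$-quilt joined by a Morse trajectory, producing precisely the $\mathfrak{P}_{n+1}\circ\alpha_n$ term. As $R\to 0$, the two seams collide and the family $\mathcal{K}_R$ limits to a $\Pi_n$-quilt, producing the $\mathfrak{P}_n$ term. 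The main obstacle is controlling the compactness as $R\to 0$: a figure-eight bubble may a priori form at the collision point. This is handled by invoking Bottman's removal of singularities \cite{Bottman} together with the ``opening up'' trick used earlier in this section (follow the seam into a Lagrangian product decoration to reinterpret the figure-eight as a genuine quilted disc in a setting where relative exactness \eqref{RE} forces zero area, hence constancy). Similarly, the stretching at $R\to\infty$ must not introduce strip-breaking at the two newly formed free ends or quilted bubbling at the seams; both are ruled out exactly as in the compactness discussion preceding the definition of $\mathfrak{P}_n$, using \eqref{RE} and the lifting argument to $X_n$ via the fiber bundle $\mu_X^{-1}(0)\times_G 0_{EG_n}\to X//G$. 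Convexity-at-infinity concerns are dealt with as in Sections~\ref{Conv1} and the preceding subsection, since the seam maximum principle argument adapts verbatim to each of the seams $\mathcal{I}_n$, $\Pi_{n+1}$, and $\Pi_n$. Identifying these boundary contributions with the claimed terms yields the desired chain homotopy.
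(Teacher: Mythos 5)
Your proposal follows essentially the same route as the paper: the two-seam surface $\underline{Z}^R$, the intermediate deformation capping off the colliding free ends via \eqref{RE}, the seam collision at $R=0$ using $\mathcal{I}_n\circ\Pi_{n+1}=\Pi_n$ with Bottman's removal of singularities and the ``opening up'' trick, and the count of $\mathfrak{H}_{[0]}(p,q,A)$ as the chain homotopy. One bookkeeping correction: the $R\to\infty$ end of the $\mathcal{K}_R$ stratum does not itself produce $\mathfrak{P}_{n+1}\circ\alpha_n$ --- it yields two quilts ``kissing'' and is glued to the zero-length end of the two-quilted-jump stratum of $\mathfrak{H}$; the term $\mathfrak{P}_{n+1}\circ\alpha_n$ instead arises from the gradient line \emph{breaking} at a critical point between the $\mathcal{I}_n$- and $\Pi_{n+1}$-jumps in that stratum. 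Since you invoke the paper's two-stratum definition of $\mathfrak{H}(p,q,A)$, this is a minor misattribution rather than a gap, but the boundary identity only closes up once these two ends are paired off correctly.
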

Since all the maps involved respect the filtration, we get a natural map
$$\mathfrak{P}: FC_{\ast}^{G}(L_0,L_1;\Lambda_0)\to FC_{\ast}(\overline{L_0},\overline{L_1};\Lambda_0).$$
\subsection{The map $\mathfrak{P}$ is gapped}
We only sketch the proof, as it is similar to the discussion in section \ref{Gapping}. Fixing a reference $J_G$ that is $G$-invariant, the quilted strips coming from the map $\mathfrak{P}_n$ have a minimal area $\hbar'>0$ corresponding to the minimal area of non-constant quilts in $X\times X\slash\slash G$ decorated by the Weinstein correspondence, hence is independent of the space $X_n$. We can also gap the non-constant homotopies appearing in the map $\mathfrak{P}$ by the same constant. The rest of the proof is just looking at open neighbourhoods as in section \ref{Gapping}.
\subsection{Finishing the proof}
The theorem now follows from proposition \ref{Prop1}. Indeed, the zero-energy part of the map $\mathfrak{P}$ coincides with the map
$$p_{1_\ast}: MH^{G}_{\ast}(L_0\cap L_1)\to MH_{\ast}(\overline{L_0\cap L_1})$$
which by the usual Cartan isomorphism (theorem \ref{Cartan}) is an isomorphism. Since $\mathfrak{P}$ is gapped, this concludes the proof.

\section{Additional remarks}
\subsection{Invariance under Hamiltonian isotopy}
We show that equivariant Floer homology is, up to homotopy equivalence, invariant under Hamiltonian isotopy in the following sense:
\begin{theorem}
  Suppose $L_0,L_1$ intersect cleanly, and let $\varphi^{1}:X\to X$ be a $G$-equivariant Hamiltonian isotopy such that the intersection $L_0\cap \varphi^{1}(L_1)$ is clean, then there is a natural isomorphism
  $$FH^{G}_{\ast}(L_0,L_1;\Lambda)\xrightarrow{\cong} FH^{G}_{\ast}(L_0,\varphi^{1}(L_1);\Lambda).$$
\end{theorem}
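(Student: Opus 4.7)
Following the pattern of the projection map $\mathfrak{P}$ constructed earlier, the plan is to build, for each $n$, a continuation-type chain map
\[\Phi_{n}:FC_{\ast}(L_{0}^{n},L_{1}^{n};\Lambda)\to FC_{\ast}(L_{0}^{n},\varphi^{1}(L_{1})^{n};\Lambda),\]
to verify that these fit into a map of telescopes, and to conclude via the standard fact that each $\Phi_{n}$ is a quasi-isomorphism over $\Lambda$. The equivariant Hamiltonian isotopy $\varphi^{t}$ is generated by a family of $G$-invariant Hamiltonians $H_{t}\in C^{\infty}(X)^{G}$. Pulling $H_{t}$ back along the projection $X\times T^{\ast}EG_{n}\to X$ gives a $G$-invariant Hamiltonian whose flow preserves $\mu_{n}^{-1}(0)$ and descends to a Hamiltonian isotopy $\varphi_{n}^{t}:X_{n}\to X_{n}$ with $\varphi_{n}^{1}(L_{1}^{n})=\varphi^{1}(L_{1})^{n}$.

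The map $\Phi_{n}$ is defined by counting rigid cascades in which the Floer strips are replaced by $s$-dependent Hamiltonian-perturbed strips interpolating between zero perturbation at $s=-\infty$ and the generator of $\varphi_{n}^{t}$ at $s=+\infty$; only one strip in the cascade carries the moving perturbation. Transversality follows as in Theorems \ref{TransversalityI} and \ref{TransversalityII} by varying the perturbation, and compactness follows from the usual action-energy identity. Working over the Novikov field $\Lambda$ is essential at this step: the action shift introduced by the interpolation is not a priori of definite sign, so the matrix coefficients of $\Phi_{n}$ carry powers of $T$ that may be negative.

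To upgrade the $\Phi_{n}$ to a map of telescopes, for each $n$ I would construct a chain homotopy between $\Phi_{n+1}\circ\alpha_{n}$ and $\alpha_{n}'\circ\Phi_{n}$, where $\alpha_{n}'$ denotes the increment map $FC_{\ast}(L_{0}^{n},\varphi^{1}(L_{1})^{n};\Lambda)\to FC_{\ast}(L_{0}^{n+1},\varphi^{1}(L_{1})^{n+1};\Lambda)$ built from the same correspondence $\mathcal{I}_{n}$. This comes from a two-parameter family of quilted strips, one parameter controlling the position of the $\mathcal{I}_{n}$-seam and the other interpolating the Hamiltonian perturbation from $0$ to the chosen $K_{s,t}$. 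Counting the boundary of the resulting $1$-dimensional moduli space yields the required homotopy; disc bubbles are excluded by \eqref{RE}, while quilted and figure-eight bubbles are excluded by the opening-up argument used earlier for $\mathfrak{P}$ and by the method of \cite[Proposition 4.36]{Cazassus}. The universal property of the completed telescope then produces a $\Lambda$-linear chain map $\Phi:FC_{\ast}^{G}(L_{0},L_{1};\Lambda)\to FC_{\ast}^{G}(L_{0},\varphi^{1}(L_{1});\Lambda)$, unique up to chain homotopy.

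The main obstacle is that $\Phi$ must actually land in the completed telescope, which demands uniform control over the energy filtration of every $\Phi_{n}$ and every homotopy. This is obtained exactly as in Section \ref{Gapping}, by choosing all auxiliary data to lift a fixed $G$-invariant almost complex structure on $X$, so that Gromov compactness in the fiber produces an $n$-independent positive lower bound on the energy of all relevant non-constant perturbed and homotopy strips. Once $\Phi$ is well-defined as a map of completed modules, the theorem follows from two standard facts: each $\Phi_{n}$ is a quasi-isomorphism over $\Lambda$, proved via a reverse continuation map $\Psi_{n}$ together with homotopies showing $\Psi_{n}\circ\Phi_{n}\simeq\id$ and $\Phi_{n}\circ\Psi_{n}\simeq\id$; and the homology of a completed telescope over $\Lambda$ is the direct limit of the homologies of its pieces.
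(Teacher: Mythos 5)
Your overall architecture matches the paper's: build level-wise continuation maps, show they commute with the increment maps up to homotopy via a two-parameter family of quilts, assemble them through the universal property of the completed telescope, and invert. One cosmetic difference: the paper implements the continuation maps by \emph{moving Lagrangian boundary conditions} $u(s,1)\in\varphi^{\beta(s)}(L_1)$ rather than by an $s$-dependent Hamiltonian perturbation of the equation; the two are related by the naturality transformation $u\mapsto(\varphi^{t\beta(s)})^{-1}\circ u$, and the paper's choice is what makes explicit the Biran--Cornea subtlety that the correct weight is $\omega(\tilde u)$ (which can be negative, forcing $\Lambda$-coefficients) --- you acknowledge the sign issue, so this is fine. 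The paper also notes one small point you assert without comment: the generating Hamiltonian of a $G$-equivariant isotopy can be taken $G$-invariant only after averaging over $G$.

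The genuine gap is your final step. The statement that ``the homology of a completed telescope over $\Lambda$ is the direct limit of the homologies of its pieces'' is not a standard fact and is false in general. The paper only asserts $H_{\ast}(\overline{\Tel}(C^{n},f_n))\cong\varinjlim H_{\ast}(C^{n})$ for the \emph{uncompleted} telescope; for the completed telescope the homology is computed (via the short exact sequence $0\to\hat\bigoplus C^{n}\to\Tel\to\hat\bigoplus qC^{n}\to 0$) as the cokernel of $1-f$ on a \emph{completed} direct sum, and whether a given element lies in the image of $1-f$ depends on the valuations of the iterated compositions $f_{n-1}\circ\cdots\circ f_{j}$, not just on the colimit. (For example, with $C^{n}=\Lambda$ and $f_n=T^{-1}\cdot$, the colimit is $\Lambda$ but the completed telescope's homology is strictly larger.) This is precisely why the paper's proof of the main theorem goes through the gapped/zero-energy argument of Proposition \ref{Prop1} rather than through a colimit identification, and why in the invariance proof the paper instead constructs an explicit reverse telescope map $\tilde\Psi$ from the backward interpolation $\beta'(s)=\beta(-s)$ and checks that $\tilde\Psi\circ\tilde\Phi$ and $\tilde\Phi\circ\tilde\Psi$ are chain homotopic to the identity \emph{at the telescope level}, so that $\tilde\Phi$ is a chain homotopy equivalence and no colimit statement is needed. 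To repair your argument you should either carry out that last gluing/homotopy step on the telescopes, or supply a proof that in this particular situation (e.g.\ using that the increment maps are filtration-preserving and the continuation maps shift the filtration by a uniformly bounded amount) the completed telescope's homology does compute the colimit.
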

\begin{obs}
  Here we work over the Novikov \emph{field} $\Lambda:=\Lambda_0[T^{-1}]$. This hypothesis is necessary.
\end{obs}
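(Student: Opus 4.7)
The plan is to lift $\varphi^{1}$ to a $G$-equivariant Hamiltonian isotopy on each symplectic Borel space $X_{n}$, run the standard Floer continuation-map argument level-by-level over $\Lambda$, verify compatibility with the increment maps $\alpha_{n}$, and pass to telescopes.

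First I would write $\varphi^{1}$ as the time-one map of a smooth family of $G$-invariant Hamiltonians $H_{t}:X\to \R$ (which exists since $\varphi^{1}$ is $G$-equivariant Hamiltonian). The product Hamiltonian $\tilde{H}_{t}=H_{t}\oplus 0$ on $X\times T^{\ast}EG_{n}$ is $G$-invariant, so its flow $\tilde{\varphi}^{t}=\varphi^{t}\times \Id$ is $G$-equivariant and, by Noether's theorem, preserves $\mu_{n}^{-1}(0)$. Reducing, we obtain a Hamiltonian isotopy $\overline{\varphi}^{t}:X_{n}\to X_{n}$ carrying $L_{1}^{n}=L_{1}\times_{G}0_{EG_{n}}$ to $(\varphi^{1}(L_{1}))^{n}$; cleanness of $L_{0}\cap\varphi^{1}(L_{1})$ in $X$ implies cleanness of $L_{0}^{n}\cap (\varphi^{1}(L_{1}))^{n}$ in $X_{n}$.

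Next I would apply the standard (clean intersection) Floer continuation construction on each $X_{n}$, counting rigid solutions of an $s$-dependent Floer equation whose boundary condition at $t=1$ is swept by $\overline{\varphi}^{\beta(s)}(L_{1}^{n})$ for a cut-off $\beta:\R\to [0,1]$ with $\beta(-\infty)=0$, $\beta(+\infty)=1$. This yields $\Lambda_{0}$-linear chain maps $c_{n}:FC_{\ast}(L_{0}^{n},L_{1}^{n};\Lambda_{0})\to FC_{\ast}(L_{0}^{n},(\varphi^{1}(L_{1}))^{n};\Lambda_{0})$; after inverting $T$ (necessary because continuation strips can have arbitrarily negative energy), reversing the homotopy and the usual two-parameter cobordism exhibit $c_{n}$ as a chain homotopy equivalence over $\Lambda$. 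The principal technical step is to check that the square
$$\begin{tikzcd}
FC_{\ast}(L_{0}^{n},L_{1}^{n};\Lambda)\arrow{r}{\alpha_{n}}\arrow{d}{c_{n}} & FC_{\ast}(L_{0}^{n+1},L_{1}^{n+1};\Lambda)\arrow{d}{c_{n+1}}\\
FC_{\ast}(L_{0}^{n},(\varphi^{1}(L_{1}))^{n};\Lambda)\arrow{r}{\alpha_{n}'} & FC_{\ast}(L_{0}^{n+1},(\varphi^{1}(L_{1}))^{n+1};\Lambda)
\end{tikzcd}$$
commutes up to chain homotopy, where $\alpha_{n}'$ is the increment map associated to the rotated pair. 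This is the main obstacle and I would handle it by a quilted continuation argument on the surface $\underline{Z}$ from Section~3.3: impose the moving boundary conditions $\overline{\varphi}^{\beta(s)}(L_{1}^{n})$ and $\overline{\varphi}^{\beta(s)}(L_{1}^{n+1})$ on the two $t=1$ boundary components (keeping the seam $\mathcal{I}_{n}$ and the $t=0$ boundaries unchanged, since $\mathcal{I}_{n}$ is determined by the diagonal in the $X$-factor and is thus preserved by the diagonal $\overline{\varphi}$-action on both sides of the seam). The standard cobordism comparing the composition of a continuation with a quilt to the quilt with a continuation on the other side, together with a breaking analysis identical to that in the proof of homotopy commutativity of the projection maps, produces the required homotopy.

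Finally I would invoke the universal property of the completed telescope (applied with $\Lambda$-coefficients) to upgrade the family $\{c_{n}\}$ with its homotopies into a chain map
$$C:FC_{\ast}^{G}(L_{0},L_{1};\Lambda)\to FC_{\ast}^{G}(L_{0},\varphi^{1}(L_{1});\Lambda).$$
Because the homology of a filtered homotopy colimit is the filtered colimit of homologies, and because each $c_{n}$ induces an isomorphism on $FH_{\ast}(\,\cdot\,;\Lambda)$, the map $C_{\ast}$ is an isomorphism. The construction is manifestly natural in $\varphi$, and the same argument run with $(\varphi^{1})^{-1}$ furnishes a homotopy inverse at the chain level. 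I do not expect Proposition~\ref{Prop1} to apply here, since the continuation maps need not be gapped; this is precisely the reason one must pass to $\Lambda=\Lambda_{0}[T^{-1}]$, in accordance with the analogous feature of non-equivariant Floer theory recorded in Section~2.2.2.
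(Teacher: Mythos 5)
Your proposal follows essentially the same route as the paper: lift the averaged $G$-invariant Hamiltonian to $X\times T^{\ast}EG_n$ and reduce, build the level-wise maps via moving Lagrangian boundary conditions, establish homotopy-commutativity with the increment maps by a moving-quilt interpolation on $\underline{Z}$, assemble via the telescope's universal property, and invert by running the isotopy backwards. The one detail worth flagging is that you attribute the need for $\Lambda$ to "arbitrarily negative energy" without noting the Biran--Cornea subtlety that the correct weight for a moving-boundary jump is $\omega(\tilde u)$ with $\tilde u(s,t)=(\varphi^{t\beta(s)})^{-1}\circ u(s,t)$ rather than $\omega(u)$ itself (weighting by $\omega(u)$ does not even give a chain map), but this is exactly the quantity that can be negative, so your conclusion stands.
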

The construction of the chain maps is based on \emph{moving Lagrangian boundary conditions} which first appeared in \cite{Oh3}, we recall the basics. In this first approach we just assume $L_0,L_1$ are compact, relatively exact Lagrangians in a symplectic manifold $X$ and in clean intersection.

Consider a smooth, non-decreasing function $\beta:\R\to [0,1]$ such that
$$\begin{cases}
  \beta(s)=0 & s\leq 0,\\
  \beta(s)=1 & s\geq 1.
\end{cases}$$
Given a Hamiltonian isotopy $\varphi^{t}$ such that $L_0$ and $\varphi^1(L_1)$ intersect cleanly and a time dependent $J_{s,t}$ that for $s<<0$ coincides with a fixed $J_t$ and as $s>>0$ coincides with $J_t'$ for $J_t$ and $J_t'$ fixed, consider the moduli space of maps $u:\R\times[0,1]\to X$ such that
$$\begin{cases}
  \overline{\partial}_{J_{s,t}}u=0,\\
  u(s,0)\subset L_0, u(s,1)\in \varphi^{\beta(s)}(L_1),\\
  E(u)<\infty.
\end{cases}$$
The same properties as with fixed Lagrangian boundary conditions hold. In particular, we have exponential convergence to intersection points as $s\to \pm \infty$. We denote by $\mathcal{MOV}(J_{s,t})$ the moduli space of such $u$'s. For generic $J_{s,t}$ this space is transversely cut out and the evaluation maps are submersive. One then constructs the moduli space of cascades where one of the jumps has a moving boundary conditions. As pointed out in \cite{Biran-Cornea2} section 3.2, there is a subtlety in the weight we associate to a jump with moving boundary conditions: weighing it by its symplectic area does not give a chain map. Instead to a strip with moving conditions $u$ we associate the quantity $\omega(\tilde{u})$ where
$$\tilde{u}(s,t)=\left(\varphi^{t\beta(s)}\right)^{-1}\circ u(s,t).$$
In general, this quantity can be negative and that is why we need to invert $T$ in our coefficients. We then consider the usual count of moving cascades and weigh them with this in consideration to obtain a map

$$\Phi:FH_{\ast}(L_0,L_1;\Lambda)\to FH_{\ast}(L_0,\varphi^{1}(L_1);\Lambda).$$

To see that it is an isomorphism we do a gluing argument, namely we consider the function $\beta'(s)=\beta(-s)$ and get an interpolation of boundary conditions going from one to the other. When the two interpolating discs collide we can glue them and homotope the boundary condition to be constant, but since we are not quotienting by the $\R$-action on this strip the cascade has to be constant and thus is the identity. Counting the boundary components of this glued 1-dimensional space gives the chain homotopy between the identity and the composition.

Now we turn to the equivariant setting. Assume there is a $G$-equivariant Hamiltonian isotopy $\varphi^{t}$. By averaging the generating function, we can assume that it is obtained by a $G$-invariant smooth map $H_t:X\to \R$. The lift
\begin{eqnarray*}
  \tilde{H}_t: X\times T^{\ast}EG_n&\to & \R\\
  (x,(q,p))&\mapsto &H_t(x)
\end{eqnarray*}
is invariant with respect to the diagonal action and so we get a Hamiltonian isotopy $\tilde{\varphi}^{t}:X_n\to X_n$. To check it defines a map from $CF_{\ast}^{G}(L_0,L_1;\Lambda)\to CF_{\ast}^{G}(L_0,\tilde{\varphi}^{1}(L_1))$ we need to verify the maps constructed via moving Lagrangian boundary conditions commute with the increment maps up to homotopy. In order to do so we introduced a moduli space that interpolates the quilted jumps from one moving boundary condition to the other.

Pick $\hat{\beta}(s,r):\R\times \R\to \R$ with the following properties:
\begin{itemize}
\item Fixing any $r_0$, $\hat{\beta}(s ,r_0)$ is non-decreasing and is equal to $1$ for $s>>0$ and $0$ for $s<<0$.
\item In a neighbourhood of $s=0$, $\hat{\beta}(s,r)$ is constant in $s$ for all $r$.
\item There exists $r_0<<0$ so that for all $r\leq r_0$, $\hat{\beta}(s,r)=\beta(s+r)$. Similarly there exists $r_1>>0$ so that for all $r\geq r_1$ we have $\hat{\beta}(s,r)=\beta(s-r)$.
\end{itemize}
Now for $r\in \R$, we set the boundary conditions for the quilted strip to be $\tilde{\varphi}^{\hat{\beta}(s,r)}(L_1^{n})$ on the top boundary. Note that since each $\hat{\beta}(s,r_0)$ is locally constant around $0$ in $s$ the strip-like ends vary smoothly and the dimension of the intersection of the increment correspondence is independent of $r$.

For a generic choice of family of quilted data the moduli space of pairs $(u,r)$ where $u$ has the above boundary conditions and $r\in \R$ is transversely cut-out, we call such $u$ a moving quilt. The usual compactness results apply, and the boundary consists of:
\begin{itemize}
\item As $r\to -\infty$ there is strip breaking into a strip with moving boundary conditions followed by an increment quilt.
\item Similarly as $r\to \infty$ there is strip breaking into an increment quilt followed by a strip with moving boundary conditions.
\item Intermediate breaking into a moving quilt and regular Floer strips at either end.
\end{itemize}
We now consider the $1$-dimensional component of the space of cascades where one of the strips is a strip with moving boundary conditions and another one is an increment strip or one of the strips is a moving quilt for some $r$. Loss of compactness of this moduli space could a priori happen:
\begin{itemize}
\item When a moving boundary condition disc touches a quilted increment, in which case we glue them to a moving quilt.
\item Similarly if a moving quilt breaks then it is glued to the previous case.
\item Cascade breaking between the moving strip and the increment quilt. Depending on the relative positions this contributes to either $\Phi\circ \alpha_n$ or $\alpha_n\circ \Phi$ where $\Phi$ is obtained by the usual moving boundary cascades.
\item Cascade breaking into a differential cascade and a zero-dimensional piece of the moduli space. Depending on the relative positions this contributes to $H\circ \partial$ or $\partial \circ H$ where $H$ is obtained by the count of the zero-dimensional components.
\end{itemize}
In short, combining the 3rd and 4th cases above we see that the boundary is algebraically encoded in
$$\Phi\circ \alpha_n+\alpha_n\circ \Phi= H\circ \partial+\partial\circ H.$$
Hence the maps commute up to homotopy and we get a well defined map
$$\tilde{\Phi}:CF_{\ast}^{G}(L_0,L_1;\Lambda)\to CF_{\ast}^{G}(L_0,\varphi^{1}(L_1);\Lambda).$$
Similarly as in the non-equivariant case we construct an inverse map by interpolating the boundary conditions backwards. It is routine to check this is chain homotopic to the identity.
\subsection{The monotone setting}
In a symplectic manifold $X$ there are two homomorphisms
$$\langle\omega,-\rangle: \pi_2(X)\to \R, \hspace{1cm}c_1:\pi_2(X)\to \Z$$
where $c_1$ denotes the first Chern class of $TM$. We say that $M$ is $\tau$-\emph{monotone} if there exists a positive constant $\tau>0$ such that
$$\langle \omega,-\rangle= 2\tau c_1.$$
Similarly, given a Lagrangian $L\subset X$, the symplectic area and the Maslov index give homomorphisms from $\pi_2(X,L)$ to $\R$ and $\Z$ respectively, which we denote by $\langle\omega,-\rangle$ and $\operatorname{Mas}$. We say that $L$ is \emph{$\tau$-monotone} if there exists positive $\tau$ such that $\langle \omega,-\rangle=\tau\operatorname{Mas}$.
Note that if $L\subset X$ is $\tau$-monotone and $X$ is connected then $X$ is automatically $\tau$-monotone. We define the minimal Chern number of $M$ as
$$c_{M}:=\min\{c_1(A)\mid A\in \pi_2(X),c_1(A)>0\}$$
and the minimal Maslov number of $L$ as
$$N_{L}:=\min\{\operatorname{Mas}(D)\mid D\in \pi_2(X,L), \operatorname{Mas}(D)>0\}.$$
We can replace the relatively exact condition \eqref{RE} by the following
\begin{equation}\tag{M1}\label{M1}
	L_0\text{ and }L_1\text{ are monotone with }N_{L_j}\geq 3.
\end{equation}
This is done by replacing all statements of the form ``we get (or have) a disc in $\pi_2(X,L_j)$ and so it must have zero area and be constant" to ``we get (or have) a disc in $\pi_2(X,L_j)$ and since $N_{L_j}\geq 3$, its index is too high and thus the principal component of the moduli space lives in a space of negative dimension, which is empty". The following lemma is also used \cite[Proposition 4.22]{Cazassus}:
\begin{lema}
	If $L\subset X$ is monotone with minimal Maslov number $N_L$ and $G$ acts freely on $\mu^{-1}(0)$, then $L\slash G$ is also monotone, and furthermore $N_{L\slash G}$ is a multiple of $N_L$.
\end{lema}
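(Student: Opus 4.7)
The plan is to establish a surjection of disc classes $\pi_{2}(X,L)\twoheadrightarrow \pi_{2}(X/\!/G,\overline{L})$ that preserves both symplectic area and Maslov index; the two claims then follow immediately from the corresponding identities.

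\textbf{Step 1 (Lifting discs).} First I would observe that $\pi\colon \mu^{-1}(0)\to X/\!/G$ is a principal $G$-bundle which, since $\pi^{-1}(\overline{L})=L$, restricts to a principal $G$-bundle $\pi_{L}\colon L\to \overline{L}$. Given any smooth disc $u\colon (D^{2},\partial D^{2})\to (X/\!/G,\overline{L})$, the pulled-back principal bundle $u^{*}\mu^{-1}(0)\to D^{2}$ is classified by a map $D^{2}\to BG$, which is null-homotopic because $D^{2}$ is contractible. A section of $u^{*}\mu^{-1}(0)$ therefore exists and gives a lift $\tilde{u}\colon (D^{2},\partial D^{2})\to (\mu^{-1}(0),L)\hookrightarrow (X,L)$ with $\pi\circ \tilde{u}=u$.

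\textbf{Step 2 (Comparing areas).} The identity $i^{*}\omega=\pi^{*}\omega_{\mathrm{red}}$ from the Marsden--Weinstein theorem gives
\begin{equation*}
\int_{D^{2}}u^{*}\omega_{\mathrm{red}} \;=\; \int_{D^{2}}\tilde{u}^{*}i^{*}\omega \;=\; \int_{D^{2}}\tilde{u}^{*}\omega,
\end{equation*}
so symplectic areas agree.

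\textbf{Step 3 (Comparing Maslov indices).} Fix a $G$-invariant $\omega$-compatible almost complex structure $J_{G}$ on $X$. Over $\mu^{-1}(0)$ there is a canonical $J_{G}$-invariant splitting
\begin{equation*}
TX\big|_{\mu^{-1}(0)} \;=\; V\oplus H, \qquad V=T\mathcal{O}\oplus J_{G}T\mathcal{O},
\end{equation*}
where $T\mathcal{O}$ is the bundle tangent to the $G$-orbits and $H$ is symplectic-isomorphic (via $d\pi$) to $\pi^{*}T(X/\!/G)$. The fundamental vector fields $\xi\mapsto X_{\xi}$ yield a canonical trivialization $T\mathcal{O}|_{\mu^{-1}(0)}\cong \mathfrak{g}$, hence $V\cong \mathfrak{g}\oplus J_{G}\mathfrak{g}$ over \emph{all} of $\mu^{-1}(0)$. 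Since $L$ is $G$-invariant and contains the orbits, $TL=T\mathcal{O}\oplus H_{L}$ where $H_{L}\subset H$ is the horizontal lift of $T\overline{L}$, which is Lagrangian in $H$. Pulling back by $\tilde{u}$ and using any trivialization of $u^{*}T(X/\!/G)$ over $D^{2}$ (combined with the canonical trivialization of $\tilde{u}^{*}V$), the loop of Lagrangians $T_{\tilde{u}(e^{i\theta})}L$ splits as the \emph{constant} loop $\mathfrak{g}\subset \mathfrak{g}\oplus J_{G}\mathfrak{g}$ in the $V$-factor, plus the loop $T_{u(e^{i\theta})}\overline{L}$ in the $H$-factor. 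Additivity of the Maslov index under direct sum, together with the vanishing of the index of a constant Lagrangian loop, gives $\operatorname{Mas}(\tilde{u})=\operatorname{Mas}(u)$.

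\textbf{Step 4 (Conclusion).} Combining Steps 2 and 3, if $\omega=\tau\operatorname{Mas}$ on $\pi_{2}(X,L)$, then $\omega_{\mathrm{red}}(u)=\omega(\tilde{u})=\tau\operatorname{Mas}(\tilde{u})=\tau\operatorname{Mas}(u)$, so $\overline{L}$ is $\tau$-monotone. Moreover every value of $\operatorname{Mas}$ on $\pi_{2}(X/\!/G,\overline{L})$ occurs as $\operatorname{Mas}(\tilde{u})$ for some disc $\tilde{u}\in \pi_{2}(X,L)$, so this image lies in $N_{L}\mathbb{Z}$; since $N_{\overline{L}}$ generates the image, $N_{L}\mid N_{\overline{L}}$.

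The main obstacle is Step 3: one must be careful that the trivialization of $V$ by fundamental vector fields is global on $\mu^{-1}(0)$ (which it is, because the $G$-action is free there), and that the symplectic splitting $V\oplus H$ respects the Lagrangian decomposition of $TL$. Both points rely crucially on the freeness hypothesis and on $L$ being $G$-invariant and contained in $\mu^{-1}(0)$.
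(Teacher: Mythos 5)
Your argument is correct. Note that the paper does not actually prove this lemma — it is quoted from Cazassus (Proposition 4.22) — but your proof uses exactly the ingredients that appear elsewhere in the paper: the contractibility lifting of discs through the principal bundle $\mu^{-1}(0)\to X/\!/G$ (used there to show relative exactness descends), the Marsden--Weinstein identity $i^{*}\omega=\pi^{*}\omega_{\mathrm{red}}$ for areas, and the splitting $TX|_{\mu^{-1}(0)}=V\oplus H$ with $H$ the symplectic orthogonal to the orbit directions. The only point in Step 3 worth phrasing more carefully is that the trivialization of $V$ by fundamental vector fields $\xi\mapsto X_{\xi}$ need not be \emph{symplectic} (the induced form on $\mathfrak{g}\oplus J_{G}\mathfrak{g}$ varies with the base point), so ``constant loop has index zero'' is not quite the right justification; the correct one is that the Lagrangian subbundle $T\mathcal{O}\subset V$ is defined over all of $\mu^{-1}(0)$ and hence extends over the entire lifted disc $\tilde{u}(D^{2})$, and a loop of Lagrangians bounding a Lagrangian subbundle over the disc has vanishing Maslov index. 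With that adjustment, additivity of the Maslov index under the direct sum $V\oplus H$ gives $\operatorname{Mas}(\tilde{u})=\operatorname{Mas}(u)$ as you claim, and Steps 2 and 4 are sound.
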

This shows that all the Lagrangians $L_j^{n}$ are monotone with $N_{L_j}\geq 3$.
With these changes in mind, the construction and Theorem \ref{MainTheorem} go through over $\Lambda_0$.

With additional hypotheses, one can reduce the coefficient ring to a polynomial ring. Suppose the pair $L_0$ and $L_1$ satisfies the following ``annuli-monotonicity''
\begin{equation}
	\label{M2}\tag{M2}
	\begin{split}
		\text{There exists }\tau >0\text{ such that }
		\text{for all } u:[0,1]^2\to X\\\text{ with } u(s,j)\subset L_j\text{ and }u(0,t)=u(1,t)\text{ we have }\\
		\int_{[0,1]^2}\omega^{\ast}u=\tau \operatorname{Mas}(u).\hspace{2cm}
	\end{split}
\end{equation}
Combining \eqref{M1} and \eqref{M2} yields an energy-index relation as in \cite[(4) in remark 2.2]{Wehrheim-Woodward1}: given two critial points $p,q$ there exists a constant $c(p,q)$ such that for all cascades $u$ from $p$ to $q$ we have
$$\omega(u)=\operatorname{Mas}(u)+c(p,q).$$
This bounds the area of all cascades with $p$ and $q$ fixed, and so, by Gromov compactness, there are only finitely many homotopy classes (and thus powers of $T$) contributing to the differential. Moreover, the only powers of $T$ which appear are integer powers of $t=T^\tau$. This reduces the coefficient ring to be the polynomial ring $\Z_2[t]$ instead of $\Lambda_0$. By using the fibrations of the symplectic Borel spaces, one gets similar energy-index relations for the quilts involved in increment maps, which ensures that $\alpha_n$ takes values in the polynomial ring. If $L_0,L_1$ are $G$-invariant and satisfy \eqref{M2} on top of \eqref{M1} then so do all pairs $(L_0^{n},L_1^{n})$. One then constructs:
$$CF^{G}_{\ast}(L_0,L_1;\Z_2[t]):= \overline{\operatorname{Tel}}(FC_{\ast}(L_0^{n},L_1^{n};\Z_2[t])).$$

However the reductions $(\overline{L_0},\overline{L_1})$ may not satisfy \eqref{M2}, they do for example if $G$ is connected. However, we still have an energy-index relation on $X\slash\slash G$ and so the Floer complex is still defined over $\Z_2[t]$. Theorem \ref{MainTheorem} holds also in this setting. One might worry that the $t$-adic filtration is not complete, however, the induced filtration in homology is complete. Then \cite[Theorem 3.9]{McCleary} can be applied to prove Theorem \ref{MainTheorem}.

As a final note, setting value $t=1$ yields \cite{Cazassus}'s version of equivariant Floer theory, and Theorem \ref{MainTheorem} holds in this setting.

\appendix
\section{Removal of the technical hypothesis}\label{Appendix}
We discuss here how to remove the technical hypothesis at the cost of using so-called ``domain dependent'' almost complex structures. The problem arises when we try to achieve transversality for tuples of pseudoholomorphic discs that are \textbf{not} absolutely distinct. To resolve this issue we proceed as follows. A cascade has an underlying metric binary tree where each vertex represents a Floer strip. We can choose an almost complex structure for each vertex independently, with the tuple of almost complex structures depending on the underlying tree. This, of course, is subject to some conditions: for one when two discs (i.e. vertices) collide, we need to ensure that the two discs have the same almost complex structure attached so we can glue them together, this is a recursive definition of the boundary of the moduli space, where the length of an edge being zero can be identified with a tree where one of the colliding vertices is removed. Second, then the length of an edge tends to infinity the tree `breaks' into two smaller trees, we must require the almost complex structures to agree with those of the already prescribed smaller trees. We spell out the details.

We define $\mathcal{T}_n$ to be the set of equivalence classes of trees with $n+2$ vertices $(T,E)$ equipped with a `length' function $\ell:E\to [0,\infty]$ subject to the following conditions:
\begin{itemize}
\item All vertices have valence at most 2, and there are exactly two vertices of valence 1, these are called \emph{exterior vertices}. One of them is labelled as starting point and the other one as endpoint.
\item The edges adjacent to exterior vertices (called exterior edges) have infinite length and all other edges (interior edges) have finite length. Note that these edges could be the same in the case $n=0$.
\end{itemize}
Where two trees are said to be \emph{equivalent} if there exists a graph isomorphism that preserves the length functions and identifies the starting point and the endpoint.

The moduli space $\mathcal{T}_n$ admits the structure of a smooth manifold with corners and is diffeomorphic to $[0,\infty)^{n-1}$ for $n\geq 2$, and diffeomorphic to a point if $n=0,1$. The corners correspond to the number of interior edges with length zero. The number of zeros will be called the \emph{depth} of the corner.

There is an identification between the depth $k$-corner of $\mathcal{T}_n$ and $\mathcal{T}_{n-k}$ where to a representative with edges $e_1,\dots, e_k$ and $\ell(e_1)=\dots=\ell(e_k)=0$ we associate the metric tree with these edges removed.

We compactify $\mathcal{T}_n$ in the following way: when the length of an edge tends to infinity, the tree breaks into a pair of trees: we add a vertex where the edge breaks, which is the endpoint of one and the starting point of the other. In other words these boundary components are products $\mathcal{T}_{k_1}\times \mathcal{T}_{k_2}$ with $k_1+k_2=n$. We denote by $\overline{\mathcal{T}_n}$ this compactification.

As an example, the space $\mathcal{T}_2$ consists of an open interval $[0,\infty)$, while $\overline{\mathcal{T}_2}$ is precisely the closed interval $[0,\infty]$. Similarly $\mathcal{T}_3$ is the open corner $[0,\infty)^2$ and $\overline{\mathcal{T}_3}$ is diffeomorphic to the square $[0,\infty]^2$.

There is a \emph{universal tree} living living over these spaces, $\mathscr{T}_n$.
\begin{figure}[h]
  \centering
  \begin{tikzpicture}
    \begin{scope}
      \draw (0,-2)node[draw,fill=white,circle,inner sep=1pt]{} --node[fill,circle,inner sep=1pt]{}node[right]{$\ell=0$} (0,2)node[draw,fill=white,circle,inner sep=1pt]{};
    \end{scope}
    \begin{scope}[shift={(2,0)}]
      \draw (0,-2)node[draw,fill=white,circle,inner sep=1pt]{} -- (0,2)node[draw,fill=white,circle,inner sep=1pt]{};
      \path (0,-1)node[fill,circle,inner sep=1pt]{}--(0,1)node[fill,circle,inner sep=1pt]{};
      \draw[<->] (0.3,-1)--node[right]{$\ell$}+(0,2);
    \end{scope}
    \begin{scope}[shift={(4,0)}]
      \draw (0,-2)node[draw,fill=white,circle,inner sep=1pt]{} -- (0,-0.2)node[draw,fill=white,circle,inner sep=1pt]{};
      \draw (0,2)node[draw,fill=white,circle,inner sep=1pt]{} -- (0,0.2)node[draw,fill=white,circle,inner sep=1pt]{};
      \path (0,-1.1)node[fill,circle,inner sep=1pt]{}--(0,1.1)node[fill,circle,inner sep=1pt]{};
      \draw[<->] (0.3,-1.1)--node[right]{$\ell=\infty$}(0.3,-0.2);
      \draw[<->] (0.3,1.1)--node[right]{$\ell=\infty$}(0.3,0.2);
    \end{scope}
    \draw (0,-2.5)node[fill,circle,inner sep=1pt]{}node[below]{$\ell=0$}--node[below]{$\overline{\mathcal{T}_{2}}$}(4,-2.5)node[fill,circle,inner sep=1pt]{}node[below]{$\ell=\infty$};
  \end{tikzpicture}
  \caption{The compactification of $\mathcal{T}_{2}$}
  \label{fig:compact-t2}
\end{figure}
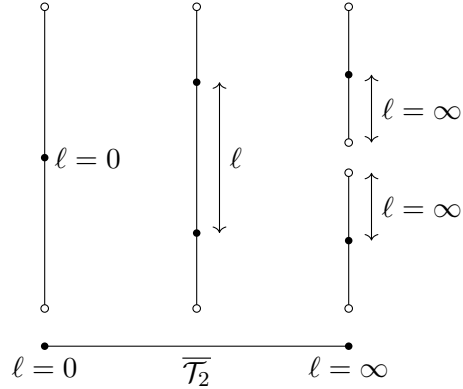

Now assume we are in the setting of clean intersection Floer homology. To the universal tree above $\overline{\mathcal{T}_n}$, we associate to each vertex a path of admissible almost complex structures $J_t$ that vary smoothly as a family over $\overline{\mathcal{T}_n}$. Formally a smooth map $\mathscr{J}_t:\mathcal{T}_n\to (\mathcal{C}^{\infty}([0,1],\mathcal{J}_{\operatorname{adm}}))^{n}$.

We require they respect the following recursive conditions:
\begin{itemize}
\item In a corner of length zero, the almost complex structures agree with the almost complex structures of $\mathcal{T}_{n-k}$ via the mentioned identification, in particular the two colliding vertices have the same associated almost complex structure.
\item When the tree breaks, we require that the associated almost complex structures of the two resulting trees agree with the almost complex structures already defined for each of the trees.
\end{itemize}
We define the moduli space $\widetilde{\mathcal{M}}_n$ to be the set of tuples $(u_1,\dots, u_n,T)$ where the tuple is $\mathscr{J}_t(T)$-holomorphic. We say that such a tuple is \emph{transversely cut-out} if the Cauchy-Riemann operators are surjective, the evaluation maps of each $u_j$ are submersive and so is the map $(u_1,\dots, u_n,T)\mapsto T$.

We then define $\widetilde{\mathcal{M}}_n(p,q)$ to be the fibered product
$$\begin{tikzcd}
  &  \widetilde{\mathcal{M}}_m \arrow{d}{\operatorname{EV}}\\
  W^{u}(p)\times(L_0\cap L_1)^{n-1}\times W^{s}(q)\times \mathcal{T}_n\arrow{r}{\Psi} & (L_0\cap L_1)^{2n} \times \mathcal{T}_n.
\end{tikzcd}$$
Where
$$EV:(u_1,\dots, u_n,T)\mapsto (u_1(-\infty),u_1(+\infty),\dots, u_n(+\infty),T)$$
and
$$\Psi:(p,q_1,\dots,q_n,r, T)\mapsto (p,q_1,\varphi^{\ell(e_1)}(q_1),\dots, r, T)$$
where $\varphi$ is the gradient flow of the Morse function and $e_1,\dots, e_{n-1}$ are the edges of $T$ ordered from starting point to endpoint.

The virtual dimension of $\widetilde{\mathcal{M}}(p,q)$ at a tuple $(u_1,\dots, u_n,T)$ is
$$\ind_{f}(p)-\ind_{f}(q)+\operatorname{Mas}(u_1\#\dots\#u_n)+n-1.$$
Quotienting out by the free $\R^{n}$ action we get the moduli space $\mathcal{M}_{n}(p,q)$.

We now show that transversality is achieved when the virtual dimension of $\mathcal{M}_{n}(p,q)$ is less or equal to 1 inductively. When $n=1$ it is clear that for generic $\mathscr{J}_t$ this moduli space is transversely cut-out, since there is only one strip. Now assume transversality is achieved in $\mathcal{T}_{n-1}$, using a universal moduli space it is straightforward to see that we can extend it to the interior, except for one problem. When two discs touch, i.e. $\ell(e)=0$ for some interior edge, our construction of $\mathscr{J}_t$ forces the two discs to have the same almost complex structure, and it would be a problem if they were not distinct. We rule out this from happening:\\
If the two discs are the same $u_{j}=u_{j+1}=u$, then $u(-\infty)=u(+\infty)$. However by transversality of $\mathcal{T}_{n-1}$, this disc is transversely cut out, and the moduli space of discs that start and end at the same point is empty unless the Maslov index of $u$ is greater or equal to $1$. But then, removing the two discs we would get a tree of virtual dimension $-1$: this space is transversely cut-out and thus empty, a contradiction.

Compactness of the moduli space is essentially the same: if two pseudoholomorphic discs touch this occurs as a boundary point of $\overline{\mathcal{T}_n}$, by gluing we can smoothly identify it with an interior point of $\mathcal{T}_{n-1}$, thus the only lack of compactness comes from the boundary breaking when the length of an edge goes to infinity and this is precisely the cascade breaking that shows that the differential squares to zero.

The rest of the proofs carry over using this formalism, for example the space of quilt-cascades just requires the addition of a distinguished vertex which corresponds to the quilted-disc, and the homotopies involved have a vertex which a label $R$ which corresponds to the distance between the two seams.

Note that the gapping condition remains unchanged since we can achieve transversality for almost complex structures in that domain.

\bibliographystyle{amsalpha}
\bibliography{references}
\end{document}